\let\@fnsymbol\@arabic
\newcommand{\id}{{\boldsymbol{\mathbbm{1}}}}
\newcommand{\tr}{{\rm tr}}
\newcommand{\dev}{{\rm {dev}}}
\newcommand{\sym}{{\rm {sym}}}
\newcommand{\skw}{{\rm {skew}}}
\def\dd{\displaystyle}
\newtheorem{theorem}{Theorem}[section]
\newtheorem{lemma}[theorem]{Lemma}
\newtheorem{remark}[theorem]{Remark}
\def\barr{\begin{array}}
	\def\earr{\end{array}}
\def\bec#1{\begin{equation}\label{#1}}
\def\becn{\begin{equation*}}
\def\endec{\end{equation}}
\def\endecn{\end{equation*}}
\def\dd{\displaystyle}
\def\bfm#1{\mbox{\boldmat}}
\begin{document}
	
	\title{Linear constrained {C}osserat-shell models including terms up to ${O}(h^5)$.\\ Conditional and unconditional existence and uniqueness}
\author{ 
	  Ionel-Dumitrel Ghiba\thanks{Corresponding author:  Ionel-Dumitrel Ghiba,  \ Department of Mathematics,  Alexandru Ioan Cuza University of Ia\c si,  Blvd.
	 	Carol I, no. 11, 700506 Ia\c si,
	 	Romania; and  Octav Mayer Institute of Mathematics of the
	 	Romanian Academy, Ia\c si Branch,  700505 Ia\c si, email:  dumitrel.ghiba@uaic.ro}
 	\qquad and \qquad  Patrizio Neff\,\thanks{Patrizio Neff,  \ \ Head of Lehrstuhl f\"{u}r Nichtlineare Analysis und Modellierung, Fakult\"{a}t f\"{u}r
		Mathematik, Universit\"{a}t Duisburg-Essen,  Thea-Leymann Str. 9, 45127 Essen, Germany, email: patrizio.neff@uni-due.de} 
}

\maketitle
\begin{abstract}

In this paper we  linearise the recently introduced geometrically nonlinear constrained Cosserat-shell model. In the framework of the linear constrained Cosserat-shell model, we provide a comparison of our linear models with the classical linear Koiter shell model and the ``best" first order shell model.   For all proposed linear models we show existence and uniqueness based on a Korn's inequality for surfaces.  
  \medskip
  
  \noindent\textbf{Keywords:}
   Cosserat shell, 6-parameter resultant shell, in-plane drill
  rotations,   constrained Cosserat elasticity, isotropy, existence of minimisers, linear theories, deformation measures, change of metric, bending measures, change of curvature measures
\end{abstract}

\tableofcontents

\section{Introduction}

The aim  of this paper is to provide the linearised formulation of the constrained geometrically nonlinear  Cosserat shell model including terms up to order $O(h^5)$ in the shell-thickness $h$ proposed in \cite{GhibaNeffPartIII}. The usage of the Cosserat directors  \cite{Cosserat08,Cosserat09} is very important in the study of thin bodies like rods, plates and shells, since they are able to capture three-dimensional effects in one or two-dimensional models, each particle being endowed with  a triad of directors.
The Cosserat approach to shell theory (also called micropolar shell theory) was initiated by the Cosserat brothers, who were the first to elaborate a rigorous study on directed media. Some theories of Cosserat surfaces have been presented in the monograph \cite{Naghdi72,Rubin00} and the linearised theory has been investigated in a large number of papers, see for instance \cite{Davini75,Birsan-IJES-07,Birsan08,Birsan-EJM/S-09}. 
The theory of simple shells (models which describe the shell-like body as a deformable surface endowed with an independent triad of orthonormal vectors connected to each point of the surface) has been given by Zhilin and Altenbach in \cite{Zhilin76,Altenbach-Zhilin-88,Altenbach04,Zhilin06} and a mathematical study of the linearised equations for this model has been presented in the papers \cite{Birsan-Alten-MMAS-10,Birsan-Alten-ZAMM-11}. We refer to the review paper \cite{Altenbach-Erem-Review} for a detailed presentation of various approaches and developments concerning Cosserat-type shell theories and to the books \cite{Ciarlet98b,Ciarlet00,Ciarlet2Diff-Geo2005} for linear shell theories in the classical elasticity framework.

The constrained geometrically nonlinear  Cosserat shell model including terms up to order $O(h^5)$ in the shell-thickness $h$ proposed in \cite{GhibaNeffPartIII} is obtained as limit case of the    novel geometrically nonlinear Cosserat introduced in \cite{Neff_plate04_cmt,GhibaNeffPartI},  i.e., letting  the Cosserat couple modulus $\mu_{\rm c}\to \infty$,  the independent Cosserat microrotation must coincide with the continuum macrorotation locally. In the constrained elastic model, the parental three-dimensional model turns into the Toupin couple stress model \cite[Eq. 11.8]{Toupin62}.  The original dimensional descent was obtained starting with a 3D-parent Cosserat model and  we  obtained a kinematical model which is equivalent to the kinematical model of 6-parameter shells. Nevertheless, the theory of 6-parameter shells was developed for shell-like bodies made of Cauchy materials, see the monographs \cite{Libai98,Pietraszkiewicz-book04} or  \cite{Eremeyev06,Pietraszkiewicz11}. 
Here, the constrained geometrically nonlinear  Cosserat shell model is considered in order to be able to compare the strain measures and the energies involved in our model with those used in other theories which do not consider independent Cosserat directors.  
 In the present paper, we give the linearised form of the constrained Cosserat shell model. We  establish that the kinematics is  consistent  with both the classical linear Koiter-Sanders-Budianski \cite{budiansky1963best} ``best'' first order shell model and the Anicic-L\'{e}ger model \cite{anicic1999formulation,anicic2001modele}.

For all the proposed models we identify the set of admissible solutions and we show existence of solution.  {Comparing the conditions imposed on the shell thickness versus the initial curvature for the $O(h^5)$-models vs. the $O(h^3)$-models, we see that the existence result for the $O(h^5)$-model needs less stringent
	assumptions.} While in the unconstrained linear Cosserat shell model there is no need for the use of  Korn-type inequalities, in the constrained linear shell models the Korn-type inequalities must be applied.

 \section{Short overview of the  geometrically nonlinear  constrained  Cosserat shell model including terms up to $O(h^5)$}\setcounter{equation}{0}\label{Intro}

\subsection{Notations}
 
   In this paper, 
 for $a,b\in\mathbb{R}^n$ we let $\bigl\langle {a},{b} \bigr\rangle _{\mathbb{R}^n}$  denote the scalar product on $\mathbb{R}^n$ with
 associated vector norm $\lVert a\rVert _{\mathbb{R}^n}^2=\bigl\langle {a},{a} \bigr\rangle _{\mathbb{R}^n}$. 
 The standard Euclidean scalar product on  the set of real $n\times  {m}$ second order tensors $\mathbb{R}^{n\times  {m}}$ is given by
 $\bigl\langle  {X},{Y} \bigr\rangle _{\mathbb{R}^{n\times  {m}}}={\rm tr}(X\, Y^T)$, and thus the  {(squared)} Frobenius tensor norm is
 $\lVert {X}\rVert ^2_{\mathbb{R}^{n\times  {m}}}=\bigl\langle  {X},{X} \bigr\rangle _{\mathbb{R}^{n\times  {m}}}$. The identity tensor on $\mathbb{R}^{n \times n}$ will be denoted by $\id_n$, so that
 ${\rm tr}({X})=\bigl\langle {X},{\id}_n \bigr\rangle $, and the zero matrix is denoted by $0_n$. We let ${\rm Sym}(n)$ and ${\rm Sym}^+(n)$ denote the symmetric and positive definite symmetric tensors, respectively.  We adopt the usual abbreviations of Lie-group theory, i.e.,
 ${\rm GL}(n)=\{X\in\mathbb{R}^{n\times n}\;|\det({X})\neq 0\}$ the general linear group ${\rm SO}(n)=\{X\in {\rm GL}(n)| X^TX=\id_n,\det({X})=1\}$ with
 corresponding Lie-algebras $\mathfrak{so}(n)=\{X\in\mathbb{R}^{n\times n}\;|X^T=-X\}$ of skew symmetric tensors
 and $\mathfrak{sl}(n)=\{X\in\mathbb{R}^{n\times n}\;| \,\tr({X})=0\}$ of traceless tensors. For all $X\in\mathbb{R}^{n\times n}$ we set ${\rm sym}\, X\,=\frac{1}{2}(X^T+X)\in{\rm Sym}(n)$, $\skw\,X\,=\frac{1}{2}(X-X^T)\in \mathfrak{so}(n)$ and the deviatoric part $\dev \,X\,=X-\frac{1}{n}\;\,\tr(X)\cdot\id_n\in \mathfrak{sl}(n)$  and we have
 the orthogonal Cartan-decomposition  of the Lie-algebra
 $
 \mathfrak{gl}(n)=\{\mathfrak{sl}(n)\cap {\rm Sym}(n)\}\oplus\mathfrak{so}(n) \oplus\mathbb{R}\!\cdot\! \id_n,$ $
 X=\dev\, \sym \,X\,+ \skw\,X\,+\frac{1}{n}\,\tr(X)\!\cdot\! \id_n\,.
 $  A matrix having the  three  column vectors $A_1,A_2, A_3$ will be written as 
 $
 (A_1\,|\, A_2\,|\,A_3).
 $ 
 We make use of the operator $\mathrm{axl}: \mathfrak{so}(3)\to\mathbb{R}^3$ associating with a skew-symmetric matrix $A\in \mathfrak{so}(3)$ the vector $\mathrm{axl}({A})\coloneqq(-A_{23},A_{13},-A_{12})^T$. The inverse operator will be denoted by ${\rm Anti}: \mathbb{R}^3\to \mathfrak{so}(3)$.
 
 For  an open domain  $\Omega\subseteq\mathbb{R}^{3}$,
 the usual Lebesgue spaces of square integrable functions, vector or tensor fields on $\Omega$ with values in $\mathbb{R}$, $\mathbb{R}^3$, $\mathbb{R}^{3\times 3}$ or ${\rm SO}(3)$, respectively will be denoted by ${\rm L}^2(\Omega;\mathbb{R})$, ${\rm L}^2(\Omega;\mathbb{R}^3)$, ${\rm L}^2(\Omega; \mathbb{R}^{3\times 3})$ and ${\rm L}^2(\Omega; {\rm SO}(3))$, respectively. Moreover, we use the standard Sobolev spaces ${\rm H}^{1}(\Omega; \mathbb{R})$ and ${\rm H}^{2,\infty}(\Omega; \mathbb{R})$ \cite{Adams75,Raviart79,Leis86}
 of functions $u$.  For vector fields $u=\left(    u_1, u_2, u_3\right)^T$ with  $u_i\in {\rm H}^{1}(\Omega)$, $i=1,2,3$,
 we define
 $
 \nabla \,u:=\left(
 \nabla\,  u_1\,|\,
 \nabla\, u_2\,|\,
 \nabla\, u_3
 \right)^T.
 $
 The corresponding Sobolev-space will be denoted by
 $
 {\rm H}^1(\Omega; \mathbb{R}^{3})$. A tensor $Q:\Omega\to {\rm SO}(3)$ having the components in ${\rm H}^1(\Omega; \mathbb{R})$ belongs to ${\rm H}^1(\Omega; {\rm SO}(3))$. In writing the norm in the corresponding  Sobolev-space we will specify the space. The space will be omitted only when the Frobenius norm or scalar product is considered.

 In the formulation of the minimization problem, for  a   $C^2(\omega)$ function $y_0:\omega\to \mathbb{R}^3$ such that ${\lVert \partial_{x_1}y_0\times \partial_{x_2}y_0\rVert}\neq 0$ we consider the  {\it Weingarten map (or shape operator)}  defined by 
 $
 {\rm L}_{y_0}\,=\, {\rm I}_{y_0}^{-1} {\rm II}_{y_0}\in \mathbb{R}^{2\times 2},
 $
 where ${\rm I}_{y_0}:=[{\nabla  y_0}]^T\,{\nabla  y_0}\in \mathbb{R}^{2\times 2}$ and  ${\rm II}_{y_0}:\,=\,-[{\nabla  y_0}]^T\,{\nabla  n_0}\in \mathbb{R}^{2\times 2}$ are  the matrix representations of the {\it first fundamental form (metric)} and the  {\it  second fundamental form} of the surface, respectively, and $n_0\,=\,\dd\frac{\partial_{x_1}y_0\times \partial_{x_2}y_0}{\lVert \partial_{x_1}y_0\times \partial_{x_2}y_0\rVert}$ is the unit normal to the curved reference surface.  
 Then, the {\it Gau{\ss} curvature} ${\rm K}$ of the surface  is determined by
 $
 {\rm K} :=\,{\rm det}{({\rm L}_{y_0})}\, 
 $
 and the {\it mean curvature} $\,{\rm H}\,$ through
 $
 2\,{\rm H}\, :={\rm tr}({{\rm L}_{y_0}}).
 $  We also use  the  tensors defined by
 \begin{align}\label{AB}
 {\rm A}_{y_0}&:=(\nabla y_0|0)\,\,[\nabla\Theta \,]^{-1}\in\mathbb{R}^{3\times 3}, \qquad \qquad 
 {\rm B}_{y_0}:=-(\nabla n_0|0)\,\,[\nabla\Theta \,]^{-1}\in\mathbb{R}^{3\times 3},
 \end{align}
 and the so-called \textit{{alternator tensor}} ${\rm C}_{y_0}$ of the surface \cite{Zhilin06}
 \begin{align}
 {\rm C}_{y_0}:=\det\nabla\Theta \,\, [	\nabla\Theta \,]^{-T}\,\begin{footnotesize}\begin{pmatrix}
 0&1&0 \\
 -1&0&0 \\
 0&0&0
 \end{pmatrix}\end{footnotesize}\,  [	\nabla\Theta \,]^{-1}.
 \end{align}
 
 We define the lifted quantities $\widehat{\rm I}_{y_0} \in \mathbb{R}^{3\times 3}$ by
 $
 \label{first_fundamental_form_lift1}
 \widehat{\rm I}_{y_0}\:\,=\,({\nabla  y_0}|n_0)^T({\nabla  y_0}|n_0)\,= {\rm I}_{y_0}^\flat+\widehat{0}_3\,$ and $\widehat{\rm II}_{y_0}\in\mathbb{R}^{3\times3}$ by
 $
 \widehat{\rm II}_{y_0}\,=\,-(\nabla y_0|n_0)^T(\nabla n_0|n_0)\,= \,{\rm II}_{y_0}^\flat-\widehat{0}_3,$
 where for a given matrix $M\in \mathbb{R}^{2\times 2}$ we employ the notations
 $
 M^\flat =\begin{footnotesize} 
 \left(\begin{array}{cc|c}
 M_{11}& M_{12}&0 \\
 M_{21}&M_{22}&0  \\
 \hline
 0&0&0
 \end{array} 
 \right)
 \end{footnotesize}
 \in \mathbb{R}^{3\times 3}$ and $
 \widehat{M} =\begin{footnotesize}
 \left(\begin{array}{cc|c}
 M_{11}& M_{12}&0 \\
 M_{21}&M_{22}&0  \\
 \hline
 0&0&1
 \end{array} 
 \right)\end{footnotesize}
 \in \mathbb{R}^{3\times 3}$. 
  Corresponding quantities are definite for any $C^2$-functions  $m:\omega\to \mathbb{R}^3$ such that ${\lVert \partial_{x_1}m\times \partial_{x_2}m\rVert}\neq 0$.

 \subsection{Shell geometry and kinematics}

 Let $\Omega_\xi\subset\mathbb{R}^3$ be a three-dimensional curved {\it shell-like thin domain}. Here, the domains $\Omega_h $ and $ \Omega_\xi $ are referred to a  fixed right Cartesian coordinate frame with unit vectors $
 \boldsymbol e_i$ along the axes $Ox_i$. A generic point of $\Omega_\xi$ will be denoted by $(\xi_1,\xi_2,\xi_3)$. The elastic material constituting the shell is assumed to be homogeneous and isotropic and the reference configuration $\Omega_\xi$ is assumed to be a natural state. 
 The deformation of the body occupying the domain $\Omega_\xi$ is described by a vector map $\varphi_\xi:\Omega_\xi\subset\mathbb{R}^3\rightarrow\mathbb{R}^3$ (\textit{called deformation}) and by a \textit{microrotation}  tensor
 $
 \overline{R}_\xi:\Omega_\xi\subset\mathbb{R}^3\rightarrow {\rm SO}(3)\, 
 $ attached at each point. 
 We denote the current configuration (deformed configuration) by $\Omega_c:=\varphi_\xi(\Omega_\xi)\subset\mathbb{R}^3$. We consider  a \textit{fictitious Cartesian (planar) configuration} of the body $\Omega_h $. This parameter domain $\Omega_h\subset\mathbb{R}^3$ is a right cylinder of the form
\begin{align}\Omega_h=\left\{ (x_1,x_2,x_3) \,\Big|\,\, (x_1,x_2)\in\omega, \,\,\,-\dfrac{h}{2}\,< x_3<\, \dfrac{h}{2}\, \right\} =\,\,\dd\omega\,\times\left(-\frac{h}{2},\,\frac{h}{2}\right),\end{align}
 where  $\omega\subset\mathbb{R}^2$ is a bounded domain with Lipschitz boundary
 $\partial \omega$ and the constant length $h>0$ is the \textit{thickness of the shell}.
 For shell--like bodies we consider   the  domain $\Omega_h $ to be {thin}, i.e. the thickness $h$ is {small}. 
 
 The diffeomorphism $\Theta:\mathbb{R}^3\rightarrow\mathbb{R}^3$ describing the reference configuration (the curved surface of the shell),  will be chosen in the specific form
 \begin{equation}\label{defTheta}
 \Theta(x_1,x_2,x_3)\,=\,y_0(x_1,x_2)+x_3\ n_0(x_1,x_2), \ \ \ \ \ \qquad  n_0\,=\,\dd\frac{\partial_{x_1}y_0\times \partial_{x_2}y_0}{\lVert \partial_{x_1}y_0\times \partial_{x_2}y_0\rVert}\, ,
 \end{equation}
 where $y_0:\omega\to \mathbb{R}^3$ is a function of class $C^2(\omega)$. If not otherwise indicated, by $\nabla\Theta$ we denote
 $\nabla\Theta(x_1,x_2,0)$.

 Now, let us  define the map
 $
 \varphi:\Omega_h\rightarrow \Omega_c,\  \varphi(x_1,x_2,x_3)=\varphi_\xi( \Theta(x_1,x_2,x_3)).
 $
 We view $\varphi$ as a function which maps the fictitious  planar reference configuration $\Omega_h$ into the deformed configuration $\Omega_c$ and it is the only one unknown of the model, contrary to the unconstrained Cosserat-shell model where the microrotation tensor field  $\overline{Q}_e:\Omega_h\rightarrow {\rm SO}(3)$, is  $\overline{Q}_e(x_1,x_2,x_3)=\overline{R}_\xi( \Theta(x_1,x_2,x_3))$ is an independent unknown, too.
 
In the constrained Cosserat-shell model, the reconstructed total deformation $\varphi_s:\Omega_h\subset \mathbb{R}^3\rightarrow \mathbb{R}^3$ of the shell-like body is assumed to be
 \begin{align}\label{ansatz}
 \varphi_s(x_1,x_2,x_3)\,=\,&m(x_1,x_2)+\bigg(x_3\varrho_m(x_1,x_2)+\dd\frac{x_3^2}{2}\varrho_b(x_1,x_2)\bigg)\overline{Q}_\infty(x_1,x_2)\nabla\Theta.e_3\, ,
 \end{align}
 where $m:\omega\subset\mathbb{R}^2\to\mathbb{R}^3$ represents the total 
 deformation of the midsurface,  $\overline{Q}_\infty:\Omega_h\rightarrow {\rm SO}(3)$ is  the constrained rotation field  and which depends on the deformation of the midsurface  by extracting the appropriate continuum rotation of the midsurface  through the polar decomposition \cite{neff2013grioli} in the sense that
 \begin{align}
\overline{Q}_{ \infty }={\rm polar}\big((\nabla  m|n) [\nabla\Theta ]^{-1}\big)=(\nabla m|n)[\nabla\Theta ]^{-1}\,\sqrt{[\nabla\Theta ]\,\widehat {\rm I}_{m}^{-1}\,[\nabla\Theta ]^{T}},
 \end{align}
  with the lifted quantity $\widehat{\rm I}_{m} \in \mathbb{R}^{3\times 3}$   given by
 $
 \widehat{\rm I}_{m}\coloneqq ({\nabla  m}|n)^T({\nabla  m}|n),\   n\,=\,\dd\frac{\partial_{x_1}m\times \partial_{x_2}m}{\lVert \partial_{x_1}m\times \partial_{x_2}m\rVert},
 $
 and \linebreak
 $\varrho_m,\,\varrho_b:\omega\subset\mathbb{R}^2\to \mathbb{R}$ are introduced to allow in principal for symmetric thickness stretch  ($\varrho_m\neq1$) and asymmetric thickness stretch ($\varrho_b\neq 0$) about the midsurface and they are given by
 \begin{align}\label{final_rho}
 \varrho_m\,=&\,1-\frac{\lambda}{\lambda+2\,\mu}[\bigl\langle  \overline{Q}_{ \infty }^T(\nabla m|0)[\nabla\Theta \,]^{-1},\id_3 \bigr\rangle -2]\;,\notag \\
 \dd\varrho_b\,=&-\frac{\lambda}{\lambda+2\,\mu}\bigl\langle  \overline{Q}_{ \infty }^T(\nabla (\,\overline{Q}_{ \infty }\nabla\Theta \,.e_3)|0)[\nabla\Theta \,]^{-1},\id_3 \bigr\rangle   \\
 &+\frac{\lambda}{\lambda+2\,\mu}\bigl\langle  \overline{Q}_{ \infty }^T(\nabla m|0)[\nabla\Theta \,]^{-1}(\nabla n_0|0)[\nabla\Theta \,]^{-1},\id_3 \bigr\rangle.\notag
 \end{align}
In this fashion, we  obtain a fully two-dimensional minimization problem in which the energy density is expressed in terms of the  following tensor fields (the same strain measures are also considered in \cite{Libai98,Pietraszkiewicz-book04,Eremeyev06,NeffBirsan13,Birsan-Neff-L54-2014} but with other motivations of their importance) on the surface $\omega\,$:
 \begin{itemize}
 	\item  the symmetric elastic shell strain tensor 	$\mathcal{E}_{ \infty }\in {\rm Sym}(3)$: \begin{align}\label{e551}
 	\mathcal{E}_{ \infty }:=& \,  \overline{Q}_{\infty}^T  (\nabla  m|\overline{Q}_{\infty}\nabla\Theta \,.e_3)[\nabla\Theta \,]^{-1}-\id_3=[{\rm polar}\big((\nabla  m|n) [\nabla\Theta ]^{-1}\big)]^T(\nabla m|n)[\nabla\Theta ]^{-1}-\id_3\\	=& \sqrt{[\nabla\Theta ]^{-T}\,\widehat{\rm I}_m\,\id_2^{\flat }\,[\nabla\Theta ]^{-1}}-
 	\sqrt{[\nabla\Theta ]^{-T}\,\widehat{\rm I}_{y_0}\,\id_2^{\flat }\,[\nabla\Theta ]^{-1}};\hspace{8cm}\notag
 	\end{align}
 	\item the non-symmetric elastic shell bending--curvature tensor $\mathcal{K}_{\infty}\not\in {\rm Sym}(3)$: \begin{align}\label{e552}
 	\mathcal{K}_{\infty}  :=&\, \Big(\mathrm{axl}(\overline{Q}_{\infty}^T\,\partial_{x_1} \overline{Q}_{\infty})\,|\, \mathrm{axl}(\overline{Q}_{\infty}^T\,\partial_{x_2} \overline{Q}_{\infty})\,|0\Big)[\nabla\Theta \,]^{-1}\notag\\=&\,\bigg(\mathrm{axl}(\, \sqrt{[\nabla\Theta ]\,\widehat{\rm I}_m^{-T}[\nabla\Theta ]^{T}}[\nabla\Theta ]^{-T}(\nabla m|n)^T\,\partial_{x_1} \Big((\nabla m|n)[\nabla\Theta ]^{-1} \sqrt{[\nabla\Theta ]\,\widehat{\rm I}_m^{-1}[\nabla\Theta ]^{T}}\Big)\big)\\
 	& \ \ \ \ \    \ \,|\, \mathrm{axl}(\, \sqrt{[\nabla\Theta ]\,\widehat{\rm I}_m^{-T}[\nabla\Theta ]^{T}}[\nabla\Theta ]^{-T}(\nabla m|n)^T\,\partial_{x_2} \Big((\nabla m|n)[\nabla\Theta ]^{-1} \sqrt{[\nabla\Theta ]\,\widehat{\rm I}_m^{-1}[\nabla\Theta ]^{T}}\Big)\big)  \,|0\bigg)[\nabla\Theta ]^{-1}.\notag
 	\end{align}
 \end{itemize}

  Other consequences, besides the continuum rotation  constraint $
  {Q}_{ \infty }={\rm polar}\big((\nabla  m|n) [\nabla\Theta ]^{-1}\big)$ of the constrained Cosserat-shell model are 
  \begin{align}\label{conds1}
  \mathcal{E}_{ \infty } \, {\rm B}_{y_0} +  {\rm C}_{y_0} \mathcal{K}_{ \infty }\stackrel{!}{\in}{\rm Sym}(3) & \Leftrightarrow\ \ \sqrt{[\nabla\Theta \,]^{-T}\,\widehat{\rm I}_m\,[\nabla\Theta \,]^{-1}}\,  [\nabla\Theta \,]\Big({\rm L}_{y_0}^\flat -{\rm L}_m^\flat\Big)[\nabla\Theta \,]^{-1}\notag\\&\qquad \stackrel{!}{=}[\nabla\Theta \,]^{-T}\Big(({\rm L}_{y_0}^\flat)^T-({\rm L}_{m}^\flat)^T\Big)[\nabla\Theta \,]^{T}\sqrt{[\nabla\Theta \,]^{-T}\,\widehat{\rm I}_m\,[\nabla\Theta \,]^{-1}},
  \end{align}
  and
  \begin{align}\label{conds2}
  (  \mathcal{E}_{ \infty } \, {\rm B}_{y_0} +  {\rm C}_{y_0} \mathcal{K}_{ \infty } )   {\rm B}_{y_0}\stackrel{!}{\in}{\rm Sym}(3) &\Leftrightarrow\ \ 
  \sqrt{[\nabla\Theta \,]^{-T}\,\widehat{\rm I}_m\,[\nabla\Theta \,]^{-1}}\,  [\nabla\Theta \,]\Big({\rm L}_{y_0}^\flat - {\rm L}_m^\flat\Big){\rm L}_{y_0}^\flat[\nabla\Theta \,]^{-1}\vspace{1.5mm}\\
  &\qquad \stackrel{!}{=} [\nabla\Theta \,]^{-T} ({\rm L}_{y_0}^\flat)^T\Big(({\rm L}_{y_0}^\flat)^T-({\rm L}_{m}^\flat)^T\Big))[\nabla\Theta \,]^{T}\sqrt{[\nabla\Theta \,]^{-T}\,\widehat{\rm I}_m\,[\nabla\Theta \,]^{-1}}\,.\notag
  \end{align}
These restrictions remain additional constraints in the minimization problem and they assure\footnote{	The terms of order $  O(x_3^3) $ are not relevant here, since we have taken a quadratic ansatz for the deformation.} that 	the (through the thickness reconstructed) strain tensor  \begin{align}\label{extE}
\widetilde{\mathcal{E}}_{s}  \; =\; &\,\quad \,\,
1\,\Big[  \mathcal{E}_{ \infty } - \frac{\lambda}{\lambda+2\mu}\,\tr( \mathcal{E}_{ \infty } )\; (0|0|n_0)\, (0|0|n_0)^T  \Big]
\notag\vspace{2.5mm}\\
& 
+x_3\Big[ (\mathcal{E}_{ \infty } \, {\rm B}_{y_0} +  {\rm C}_{y_0} \mathcal{K}_{ \infty }) -
\frac{\lambda}{(\lambda+2\mu)}\, {\rm tr}  (\mathcal{E}_{ \infty } {\rm B}_{y_0} + {\rm C}_{y_0}\mathcal{K}_{ \infty } )\; (0|0|n_0)\,  (0|0|n_0)^T  \Big]
\vspace{2.5mm}\\
& 
+x_3^2\Big[\,(\mathcal{E}_{ \infty } \, {\rm B}_{y_0} +  {\rm C}_{y_0} \mathcal{K}_{ \infty }) {\rm B}_{y_0} \Big]\;+\; O(x_3^3)\notag
\end{align}
remains symmetric.
\subsection{Variational problem for the constrained nonlinear Cosserat-shell model}

 The  variational problem for the constrained Cosserat $O(h^5)$-shell model (see  \cite{GhibaNeffPartIII})  is  to find a deformation of the midsurface
 $m:\omega\subset\mathbb{R}^2\to\mathbb{R}^3$  minimizing on $\omega$ the  functional
 \begin{align}\label{minvarmc}
 I= \int_{\omega}   \,\, \Big[ & \,\Big(h+{\rm K}\,\dfrac{h^3}{12}\Big)\,
 W_{{\rm shell}}^{\infty}\big(    \sqrt{[\nabla\Theta ]^{-T}\,\widehat{\rm I}_m\,\id_2^{\flat }\,[\nabla\Theta ]^{-1}}-
 \sqrt{[\nabla\Theta ]^{-T}\,\widehat{\rm I}_{y_0}\,\id_2^{\flat }\,[\nabla\Theta ]^{-1}} \big)\vspace{2.5mm}\notag\\    
 & +   \Big(\dfrac{h^3}{12}\,-{\rm K}\,\dfrac{h^5}{80}\Big)\,
 W_{{\rm shell}}^{\infty}  \big(   \sqrt{[\nabla\Theta ]^{-T}\,\widehat{\rm I}_m\,[\nabla\Theta ]^{-1}}\,  [\nabla\Theta ]\Big({\rm L}_{y_0}^\flat - {\rm L}_m^\flat\Big)[\nabla\Theta ]^{-1}\big) \notag \\\hspace*{-2.5cm}&
 -\dfrac{h^3}{3} \mathrm{ H}\,\mathcal{W}_{{\rm shell}}^{\infty}  \big(  \sqrt{[\nabla\Theta ]^{-T}\,\widehat{\rm I}_m\,\id_2^{\flat }\,[\nabla\Theta ]^{-1}}-
 \sqrt{[\nabla\Theta ]^{-T}\,\widehat{\rm I}_{y_0}\,\id_2^{\flat }\,[\nabla\Theta ]^{-1}} ,\notag\\
 &\qquad \qquad \qquad\quad\   \sqrt{[\nabla\Theta ]^{-T}\,\widehat{\rm I}_m\,[\nabla\Theta ]^{-1}}\,  [\nabla\Theta ]\Big({\rm L}_{y_0}^\flat - {\rm L}_m^\flat\Big)[\nabla\Theta ]^{-1} \big)\\&+
 \dfrac{h^3}{6}\, \mathcal{W}_{{\rm shell}}^{\infty}  \big(  \sqrt{[\nabla\Theta ]^{-T}\,\widehat{\rm I}_m\,\id_2^{\flat }\,[\nabla\Theta ]^{-1}}-
 \sqrt{[\nabla\Theta ]^{-T}\,\widehat{\rm I}_{y_0}\,\id_2^{\flat }\,[\nabla\Theta ]^{-1}} ,\notag
 \\
 &\qquad \qquad \qquad\ \ \sqrt{[\nabla\Theta ]^{-T}\,\widehat{\rm I}_m\,[\nabla\Theta ]^{-1}}\,  [\nabla\Theta ]\Big({\rm L}_{y_0}^\flat - {\rm L}_m^\flat\Big){\rm L}_{y_0}^\flat[\nabla\Theta ]^{-1}\big)\vspace{2.5mm}\notag\notag\\&+ \,\dfrac{h^5}{80}\,\,
 W_{\mathrm{mp}}^{\infty} \big(\sqrt{[\nabla\Theta ]^{-T}\,\widehat{\rm I}_m\,[\nabla\Theta ]^{-1}}\,  [\nabla\Theta ]\Big({\rm L}_{y_0}^\flat - {\rm L}_m^\flat\Big){\rm L}_{y_0}^\flat[\nabla\Theta ]^{-1}\,\big)\notag\vspace{2.5mm}\notag
 \\
 &+ \Big(h-{\rm K}\,\dfrac{h^3}{12}\Big)\,
 W_{\mathrm{curv}}\big(  \mathcal{K}_{ \infty } \big)    +  \Big(\dfrac{h^3}{12}\,-{\rm K}\,\dfrac{h^5}{80}\Big)\,
 W_{\mathrm{curv}}\big(  \mathcal{K}_{ \infty }   {\rm B}_{y_0} \,  \big)  + \,\dfrac{h^5}{80}\,\,
 W_{\mathrm{curv}}\big(  \mathcal{K}_{ \infty }   {\rm B}_{y_0}^2  \big)
 \Big] \,{\rm det}\nabla \Theta        \,\mathrm{\rm da}\notag\\&\qquad - \overline{\Pi}(m,{Q}_{ \infty }),\notag
 \end{align}
 such that
 \begin{align}\label{contsym}
 \mathcal{E}_{ \infty } \, {\rm B}_{y_0}& +  {\rm C}_{y_0} \mathcal{K}_{ \infty }\stackrel{!}{\in}{\rm Sym}(3)\ \vspace{1.5mm}\notag\\ & \Leftrightarrow\ \ \sqrt{[\nabla\Theta \,]^{-T}\,\widehat{\rm I}_m\,[\nabla\Theta \,]^{-1}}\,  [\nabla\Theta \,]\Big({\rm L}_{y_0}^\flat -{\rm L}_m^\flat\Big)[\nabla\Theta \,]^{-1}\notag\\&\qquad \stackrel{!}{=}[\nabla\Theta \,]^{-T}\Big(({\rm L}_{y_0}^\flat)^T-({\rm L}_{m}^\flat)^T\Big)[\nabla\Theta \,]^{T}\sqrt{[\nabla\Theta \,]^{-T}\,\widehat{\rm I}_m\,[\nabla\Theta \,]^{-1}},\notag\vspace{1.5mm}\notag\\
 (  \mathcal{E}_{ \infty } \,& {\rm B}_{y_0} +  {\rm C}_{y_0} \mathcal{K}_{ \infty } )   {\rm B}_{y_0}\stackrel{!}{\in}{\rm Sym}(3)\vspace{1.5mm}\\ &\Leftrightarrow\ \ 
 \sqrt{[\nabla\Theta \,]^{-T}\,\widehat{\rm I}_m\,[\nabla\Theta \,]^{-1}}\,  [\nabla\Theta \,]\Big({\rm L}_{y_0}^\flat - {\rm L}_m^\flat\Big){\rm L}_{y_0}^\flat[\nabla\Theta \,]^{-1}\notag\vspace{1.5mm}\notag\\
 &\qquad \stackrel{!}{=}  [\nabla\Theta \,]^{-T} ({\rm L}_{y_0}^\flat)^T\Big(({\rm L}_{y_0}^\flat)^T-({\rm L}_{m}^\flat)^T\Big))[\nabla\Theta \,]^{T}\sqrt{[\nabla\Theta \,]^{-T}\,\widehat{\rm I}_m\,[\nabla\Theta \,]^{-1}}\,,\notag
 \end{align}
 where 
 \begin{align} 
 \mathcal{K}_{ \infty } & = \, \Big(\mathrm{axl}({Q}_{ \infty }^T\,\partial_{x_1} {Q}_{ \infty })\,|\, \mathrm{axl}({Q}_{ \infty }^T\,\partial_{x_2} {Q}_{ \infty })\,|0\Big)[\nabla\Theta ]^{-1}, \vspace{2.5mm}\notag\\
 {Q}_{ \infty }&={\rm polar}\big((\nabla  m|n) [\nabla\Theta ]^{-1}\big)=(\nabla m|n)[\nabla\Theta ]^{-1}\,\sqrt{[\nabla\Theta ]\,\widehat {\rm I}_{m}^{-1}\,[\nabla\Theta ]^{T}},\notag\\
 W_{{\rm shell}}^{\infty}(  S)  &=   \mu\,\lVert\,   S\rVert^2  +\,\dfrac{\lambda\,\mu}{\lambda+2\mu}\,\big[ \mathrm{tr}   \, (S)\big]^2,\qquad 
 \mathcal{W}_{{\rm shell}}^{\infty}(  S,  T) =   \mu\,\bigl\langle  S,   T\bigr\rangle+\,\dfrac{\lambda\,\mu}{\lambda+2\mu}\,\mathrm{tr}  (S)\,\mathrm{tr}  (T), 
 \\
 W_{\mathrm{mp}}^{\infty}(  S)&= \mu\,\lVert  S\rVert^2+\,\dfrac{\lambda}{2}\,\big[ \mathrm{tr}\,   (S)\big]^2 \qquad \quad\forall \ S,T\in{\rm Sym}(3), \notag\vspace{2.5mm}\\
 W_{\mathrm{curv}}(  X )&=\mu\,{\rm L}_c^2\left( b_1\,\lVert \dev\,\sym \, X\rVert^2+b_2\,\lVert\skw \,X\rVert^2+b_3\,
 [\tr(X)]^2\right) \quad\qquad \forall\  X\in\mathbb{R}^{3\times 3}.\notag
 \end{align}

 The parameters $\mu$ and $\lambda$ are the usual \textit{Lam\'e constants}
 of classical isotropic elasticity, $\kappa=\frac{2\,\mu+3\,\lambda}{3}$ is the \textit{infinitesimal (elastic) bulk modulus}, $b_1, b_2, b_3$ are \textit{non-dimensional constitutive curvature coefficients (weights)} and ${L}_{\rm c}>0$ introduces an \textit{{internal length} } which is {characteristic} for the material, e.g., related to the grain size in a polycrystal. The
 internal length ${L}_{\rm c}>0$ is responsible for \textit{size effects} in the sense that smaller samples are relatively stiffer than
 larger samples. If not stated otherwise, we assume that $\mu>0$, $\kappa>0$, $b_1>0$, $b_2>0$, $b_3> 0$. All the constitutive coefficients  are coming from the three-dimensional Cosserat formulation (in the constrained model from the three-dimensional Toupin couple stress model, too), without using any a posteriori fitting of some two-dimensional constitutive coefficients.

 The potential of applied external loads $ \overline{\Pi}(m,{Q}_{ \infty }) $ appearing in the variational problem is expressed by 
 \begin{align}\label{e4o}
 \overline{\Pi}(m,{Q}_{ \infty })\,=\,& \, \Pi_\omega(m,{Q}_{ \infty }) + \Pi_{\gamma_t}(m,{Q}_{ \infty })\,,\qquad\textrm{with}   \\
 \Pi_\omega(m,{Q}_{ \infty }) \,=\,& \dd\int_{\omega}\bigl\langle  {f}, u \bigr\rangle \, da + \Lambda_\omega({Q}_{ \infty })\qquad \text{and}\qquad
 \Pi_{\gamma_t}(m,{Q}_{ \infty })\,=\, \dd\int_{\gamma_t}\bigl\langle  {t},  u \bigr\rangle \, ds + \Lambda_{\gamma_t}({Q}_{ \infty })\,,\notag
 \end{align}
 where $ u(x_1,x_2) \,=\, m(x_1,x_2)-y_0(x_1,x_2) $ is the displacement vector of the midsurface,  $\Pi_\omega(m,{Q}_{ \infty })$ is the potential of the external surface loads $f$, while  $\Pi_{\gamma_t}(m,{Q}_{ \infty })$ is the potential of the external boundary loads $t$.  The functions $\Lambda_\omega\,, \Lambda_{\gamma_t} : {\rm L}^2 (\omega, \textrm{SO}(3))\rightarrow\mathbb{R} $ are expressed in terms of the loads from the three-dimensional parental variational problem, see \cite{GhibaNeffPartI}, and they are assumed to be continuous and bounded operators. Here, $ \gamma_t $ and $ \gamma_d $ are nonempty subsets of the boundary of $ \omega $ such that $   \gamma_t \cup \gamma_d= \partial\omega $ and $ \gamma_t \cap \gamma_d= \emptyset $\,. On $ \gamma_t $ we have considered traction boundary conditions, while on $ \gamma_d $ we have the Dirichlet-type boundary conditions: \begin{align}\label{boundary}
 m\big|_{\gamma_d}&=m^*, \qquad \qquad 
{Q}_{ \infty }Q_0.e_3\big|_{ \gamma_d}=\,\dd\frac{\partial_{x_1}m^*\times \partial_{x_2}m^*}{\lVert \partial_{x_1}m^*\times \partial_{x_2}m^*\rVert },
 \end{align}
 where the boundary conditions are to be understood in the sense of traces.

 \begingroup
 \allowdisplaybreaks
 Since we would like to obtain a compatible form of our linear model with the classical linear Koiter model presented in Section \ref{Km}, before performing the linearisation,  let us remark that we can express the nonlinear strain tensors in the form
 \begin{align}\label{eq5}
 \mathcal{E}_{\infty}=&\quad\ \, [\nabla\Theta \,]^{-T}
 \left[
 (\overline{Q}_{\infty} \nabla y_0)^{T} \nabla m- {\rm I}_{y_0}\right]^\flat [\nabla\Theta \,]^{-1} \notag=\ \,
 [\nabla\Theta \,]^{-T}
 \mathcal{G}_{\infty} ^\flat [\nabla\Theta \,]^{-1},
 \vspace{6pt}\notag\\
 \mathrm{C}_{y_0} \mathcal{K}_{\infty} = &\quad\ \, [\nabla\Theta \,]^{-T}
 \left[
 (\overline{Q}_{\infty} \nabla y_0)^{T} \nabla (\overline{Q}_{\infty} n_0)+ {\rm II}_{y_0} \right]^\flat [\nabla\Theta \,]^{-1}=\, -[\nabla\Theta \,]^{-T}
 \mathcal{R}_{\infty} ^\flat [\nabla\Theta \,]^{-1},\notag\\
 \mathrm{C}_{y_0} \mathcal{K}_{\infty}{\rm B}_{y_0} = & \,- [\nabla\Theta \,]^{-T}
 \left[
 \mathcal{R}_{\infty}\, {\rm L}_{y_0}\right]^\flat[\nabla\Theta \,]^{-1},
 \vspace{10pt}\\\notag
 \mathrm{C}_{y_0} \mathcal{K}_{e,s} {\rm B}^2_{y_0} = &\, - [\nabla\Theta \,]^{-T}
 \left[
 \mathcal{R}_{\infty}\, {\rm L}^2_{y_0}\right
 ]^\flat [\nabla\Theta \,]^{-1},
 \vspace{10pt}\\
 \mathcal{E}_{\infty} {\rm B}_{y_0}  + \mathrm{C}_{y_0} \mathcal{K}_{e,s} 
 = &\, -[\nabla\Theta \,]^{-T}
 \left[
 \mathcal{R}_{\infty}-\mathcal{G}_{\infty}\,{\rm L}_{y_0}\right]^\flat\ [\nabla\Theta \,]^{-1}\notag,\\
 \mathcal{E}_{\infty}{\rm B}_{y_0}^2  + \mathrm{C}_{y_0} \mathcal{K}_{\infty} {\rm B}_{y_0}
 = &\, -[\nabla\Theta \,]^{-T}
 \left[
 (\mathcal{R}_{\infty} -\mathcal{G}_{\infty} \,{\rm L}_{y_0})\,{\rm L}_{y_0}\right]^\flat [\nabla\Theta \,]^{-1}\notag
 ,
 \end{align}
 \endgroup
 where
 \begin{align}\label{eq41}
 \mathcal{G}_{\infty} :=&\, (\overline{Q}_{\infty} \nabla y_0)^{T} \nabla m- {\rm I}_{y_0}\in {\rm Sym}(2)\qquad\qquad\qquad\qquad\quad\ \ \ \  \textrm{\it the change of metric tensor},
 \\
 \mathcal{R}_{\infty} :=& \, -(\overline{Q}_{\infty} \nabla y_0)^{T} \nabla (\overline{Q}_{\infty} n_0)- {\rm II}_{y_0}\not\in {\rm Sym}(2)
 \quad  \qquad\qquad \ \ \, \,\,\textrm{\it the bending  strain tensor}.
 \notag\end{align}
 The nonsymmetric quantity $ \mathcal{R}_{\infty}-\mathcal{G}_{\infty} \,{\rm L}_{y_0}$ represents {\it the change of curvature} tensor. The choice of this name will be justified  in a forthcoming paper \cite{GhibaNeffPartVI}. For now, we just mention that 
 the definition of $\mathcal{G}_{\infty}$ is related to the classical {\it change of metric }  tensor in the Koiter model
 \begin{align} \mathcal{G}_{\mathrm{Koiter}} := \dfrac12 \big[ ( \nabla m)^{T} \nabla m- {\rm I}_{y_0}\big]=\dfrac12\,({\rm I}_m-{\rm I}_{y_0})\in {\rm Sym}(2),\end{align}
 while the bending strain tensor $\mathcal{R}_{\infty}$ may be compared  with the classical {\it bending strain tensor}  in the Koiter model
 \begin{align} \mathcal{R}_{\rm{Koiter}} :=  -(\nabla m)^{T} \nabla n- {\rm II}_{y_0}={\rm II}_{m}-{\rm II}_{y_0}\in {\rm Sym}(2).\end{align}
 We also note  that the constrained Cosserat-shell model is not able to reflect the effect of the transverse shear vector $\mathcal{T}_\infty:= \, (\overline{Q}_{\infty}  n_0)^T (\nabla m) $, since it vanishes as is intended.

 In the constrained nonlinear Cosserat-shell model the tensor $ \mathcal{G}_\infty $ is symmetric (since $ \mathcal{E}_\infty$ is symmetric). The same will be true in the linear model, too.

    In our model the total energy is not simply the sum of  energies coupling the membrane and the change of curvature effects, respectively.  Two further coupling energies are still present, see the lines 3-6 from \eqref{minvarmc} and \eqref{eq5}, and they result directly from the dimensional reduction of the variational problem from  geometrically nonlinear three-dimensional Cosserat elasticity.  Beside this, the bending and the drilling effects are present in the model, see the 8th line from \eqref{minvarmc} and \eqref{eq5}, due to the parental three-dimensional  curvature energy, and  not derived to the parental three-dimensional quadratic energy depending on the Biot-strain tensor. 
    
 	Our model   is constructed in \cite{GhibaNeffPartI} under the following assumptions upon the thickness 
 	\begin{align}\label{ch5in}
 	h\,\max \{\sup_{(x_1,x_2)\in {\omega}}|{\kappa_1}|,\sup_{(x_1,x_2)\in {\omega}}|{\kappa_2}|\}<2\end{align}
 	where  ${\kappa_1}$ and ${\kappa_2}$ denote  the {\rm principal curvatures} of the surface.  {The hypothesis \eqref{ch5in} which guarantees the local injectivity of the parametrization of the shell
 	represents the key point of the  works \cite{anicic2001modele,anicic1999formulation} on strongly curved shells.}

The geometrically nonlinear model admits global minimizers for   materials with the Poisson ratio  $\nu=\frac{\lambda}{2(\lambda+\mu)}$ and Young's modulus $ E=\frac{\mu(3\,\lambda+2\,\mu)}{\lambda+\mu}$  such that
$
-\frac{1}{2}<\nu<\frac{1}{2}$ \text{and} $E>0\, 
$  \cite{GhibaNeffPartII}.  
Under these assumptions on the constitutive coefficients, together with the positivity of  $b_1$, $b_2$ and $b_3$, and the orthogonal Cartan-decomposition  of the Lie-algebra
$
\mathfrak{gl}(3)$ 
it follows that there exists positive constants  $c_1^+, c_2^+, C_1^+$ and $C_2^+$  such that for all $X\in \mathbb{R}^{3\times 3}$ the following inequalities hold
\begin{align}\label{pozitivdef}
C_1^+ \lVert S\rVert ^2&\geq\, {W}_{\mathrm{shell}}^{\infty}( S) \geq\, c_1^+ \lVert  S\rVert ^2 \qquad \qquad \ \forall \, S\,\in{\rm Sym}(3),\notag\\
C_2^+ \lVert X \rVert ^2 &\geq\, W_{\mathrm{curv}}(  X )
\geq\,  c_2^+ \lVert X \rVert ^2\qquad \qquad  \forall \, X\in\mathbb{R}^{3\times 3}.
\end{align}

Step by step, in the next sections we will see that our models generalize a whole family of models. In the end, the linearization of the deformation measures of our constrained Cosserat-shell model are  those  preferred  in the later works by Sanders and Budiansky \cite{budiansky1962best,budiansky1963best} and by Koiter and Simmonds \cite{koiter1973foundations}, who called the obtained theory  the ``best first-order linear elastic shell theory'' but our change of curvature measure will be that proposed by Anicic and  {L\'{e}ger} \cite{anicic1999formulation,anicic2001modele}.

\section{The classical linear (first) Koiter model and the corresponding existence results}\setcounter{equation}{0}\label{Km}
According to \cite[page 344]{Ciarlet00}, \cite[page 154]{ciarlet2005introduction}  in the linear (first) Koiter model, the variational problem is to find a midsurface displacement vector field
$v:\omega\subset\mathbb{R}^2\to\mathbb{R}^3$  minimizing\footnote{Observe that ${\rm det}(\nabla y_0|n_0)=\sqrt{\det ([\nabla y_0]^T\nabla y_0)}$ is independent of $n_0$.}
\begin{align}\label{Ap7matrix1}
\dd\int_\omega &\bigg\{h\bigg(
\mu\rVert    [\nabla\Theta]^{-T} [\mathcal{G}_{\rm{Koiter}}^{\rm{lin}}]^\flat [\nabla\Theta]^{-1}\rVert^2  +\dfrac{\,\lambda\,\mu}{\lambda+2\,\mu} \, \mathrm{tr} \Big[ [\nabla\Theta]^{-T} [\mathcal{G}_{\rm{Koiter}}^{\rm{lin}}]^\flat  [\nabla\Theta]^{-1}\Big]^2\bigg) \vspace{6pt} \\
&+\dd\frac{h^3}{12}\bigg(
\mu\rVert    [\nabla\Theta]^{-T} \big(\mathcal{R}_{\rm{Koiter}}^{\rm{lin}}\big)^\flat  [\nabla\Theta]^{-1}\rVert^2 +\dfrac{\,\lambda\,\mu}{\lambda+2\,\mu} \, \mathrm{tr} \Big[ [\nabla\Theta]^{-T} \big(\mathcal{R}_{\rm{Koiter}}^{\rm{lin}}\big)^\flat   [\nabla\Theta]^{-1}\Big]^2\bigg)\bigg\}\,{\rm det}(\nabla y_0|n_0)\, {\rm d}a,\notag
\end{align}
where the strain measures \cite{Ciarlet00}  are given by
\begin{equation}
\label{equ11}
\mathcal{G}_{\rm{Koiter}}^{\rm{lin}} \,\,:=\frac{1}{2}\big[{\rm I}_m - {\rm I}_{y_0}\big]^{\rm{lin}}= \,\,\frac12\;\big[  (\nabla y_0)^{T}(\nabla v) +  (\nabla v)^T(\nabla y_0)\big]
= \sym\big[ (\nabla y_0)^{T}(\nabla v)\big]\in {\rm Sym}(2)
\;
\end{equation}
and  {\cite{blouza1994lemme}}
\begin{align}
\label{equ12}
\mathcal{R}_{\rm{Koiter}}^{\rm{lin}} \,\,:&= \,\, \big[{\rm II}_m - {\rm II}_{y_0}\big]^{\rm{lin}} \,= 	 \Big( \bigl\langle n_0 ,  \partial_{x_\alpha  x_\beta}\,v- \dd\sum_{\gamma=1,2}\Gamma^\gamma_{\alpha \beta}\,\partial_{x_\gamma}\,v\bigr\rangle a^\alpha\,\Big)_{\alpha\beta}{\in {\rm Sym}(2)}.
\end{align}
Here, and in the rest of the paper, $a_1, a_2, a_3\,$ denote the columns of $\nabla\Theta \,$, while  $a^1, a^2, a^3\,$ denote the rows of $[\nabla\Theta \,]^{-1}$, i.e., 
\begin{align}
\nabla\Theta \,&\,=\,(\nabla y_0|\,n_0)\,=\,(a_1|\,a_2|\,a_3),\qquad \qquad \quad  [\nabla\Theta \,]^{-1}\,=\,(a^1|\,a^2|\,a^3)^T.
\end{align}
In fact,   $a_1, a_2\,$ are the covariant base vectors and $ a^1, a^2\,$ are the contravariant base vectors in the tangent plane given by
\begin{align}
a_\alpha:=\,\partial_{x_\alpha}y_{0},\qquad \qquad \qquad  \langle a^\beta, a_\alpha\rangle \,=\,\delta_\alpha^\beta,\qquad \qquad  \alpha,\beta=1,2, 
\quad \text{and}\qquad \qquad a_3\,= a^3=n_0.
\end{align}
 The following relations hold \cite[page 95]{Ciarlet00}: 
\begin{align}
\lVert a_1\times a_2\rVert=\sqrt{\det {\rm I}_{y_0}},\quad  \qquad  a_3\times a_1=\sqrt{\det {\rm I}_{y_0}}\, a^2, \quad \qquad a_2\times a_3=\sqrt{\det {\rm I}_{y_0}}\, a^1.
\end{align}
The expression of $\mathcal{R}_{\rm{Koiter}}^{\rm{lin}}$ involves the Christoffel  symbols
$
\Gamma^\gamma_{\alpha\beta}
$  on the surface given by
$
\Gamma^\gamma_{\alpha\beta}=\bigl\langle a^\gamma, \partial_{x_\alpha} a_\beta\bigr\rangle=-\bigl\langle \partial_{x_\alpha} a^\gamma,  a_\beta\bigr\rangle=\Gamma^\gamma_{\beta\alpha}.
$

Note that,
using the expansion $m=y_0+v$ and
$
(\nabla m)^T\nabla m=(\nabla y_0)^T\nabla y_0+(\nabla y_0)^T\nabla v+(\nabla v)^T\nabla y_0+\text{h.o.t}\in \mathbb{R}^{2\times 2},
$
the linear approximation of the difference  $\frac{1}{2}\big[{\rm I}_m - {\rm I}_{y_0}\big]^{\rm{lin}}$ appearing in the Koiter model  is easy to be obtained \cite[page 92]{Ciarlet00}, the linear approximation of the difference  $\big[{\rm II}_m - {\rm II}_{y_0}\big]^{\rm{lin}}$ needs some more insights from differential geometry \cite[page 95]{Ciarlet00} and it is based on 
{\it formulas of Gau\ss}  \ $\partial_{x_\alpha} a_\beta=\sum_{\gamma=1,2}\Gamma_{\alpha\beta}^\gamma \,a_\gamma+b_{\alpha\beta}a_3\qquad \text{and}\qquad   
\partial_{x_\beta}a^\alpha = - \sum_{\gamma=1,2}\Gamma^\alpha_{\gamma\beta}\,a^\gamma + b^\alpha_\beta\, n_0 
$
and the formulas of  {\it Weingarten}
$\partial_{x_\alpha}     a_3=\partial_{x_\alpha}   a^3$ \linebreak $= -\sum_{\beta=1,2}b_{\alpha\beta}\, a^\beta=-\sum_{\beta=1,2} b^\gamma_\beta\, a_\gamma\;
$
together with  the relations \cite[page 76]{Ciarlet00}
$  b_{\alpha\beta}(m)=-\bigl\langle\partial _\alpha a_3(m), a_\beta(m)\bigr\rangle=\bigl\langle\partial _\alpha a_\beta(m), a_3(m)\bigr\rangle=b_{\beta\alpha}(m),
$
where  $b_{\alpha\beta}(m)$ are the components of the second fundamental form corresponding to the map $m$, $b_{\alpha}^\beta(m)$ are the components of the matrix associated to the Weingarten map (shape operator), and on the following sets of  linear approximations of the normal to the surface
\begin{align}\label{liniarn}
n=\frac{\partial_{x_1} m\times \partial_{x_2} m}{\sqrt{\det ((\nabla m)^T\nabla m)}}=&\,\frac{1}{\sqrt{\det ((\nabla m)^T\nabla m)}}\left(\partial_{x_1} y_0\times \partial_{x_2} y_0+\partial_{x_1} y_0\times \partial_{x_2} v+\partial_{x_1} v\times \partial_{x_2} y_0+\text{h.o.t}\right),\notag\\
\det ((\nabla m)^T\nabla m)=&\,\det ((\nabla y_0)^T\nabla y_0)\, (1+ \tr(((\nabla y_0)^T\nabla y_0)^{-1}\, \sym ((\nabla y_0)^T\nabla v))+\text{h.o.t})\notag\\
=& \det ((\nabla y_0)^T\nabla y_0)(1+2\, \sum_{\alpha=1,2}\bigl\langle \partial_{x_\alpha}v, a^\alpha\bigr\rangle+\text{h.o.t})\notag\\
\frac{1}{\sqrt{\det ((\nabla m)^T\nabla m)}}=&\,\frac{1}{\sqrt{\det ((\nabla y_0)^T\nabla y_0)}}\, (1-\tr(((\nabla y_0)^T\nabla y_0)^{-1}\, \sym ((\nabla y_0)^T\nabla v))+\text{h.o.t}),\\
n=\frac{\partial_{x_1} m\times \partial_{x_2} m}{\sqrt{\det ((\nabla m)^T\nabla m)}}=&\,n_0+\frac{1}{\sqrt{\det ((\nabla y_0)^T\nabla y_0)}}\left(\partial_{x_1} y_0\times \partial_{x_2} v+\partial_{x_1} v\times \partial_{x_2} y_0+\text{h.o.t}\right)\notag\\
&\quad -\tr(((\nabla y_0)^T\nabla y_0)^{-1}\, \sym ((\nabla y_0)^T\nabla v) )\,n_0\notag.
\end{align}
Here, ``h.o.t" stands for terms of order higher than linear with respect to $v$.

We also note that other alternative (but equivalent) forms of the change of metric tensor and  the change of curvature tensor  given in \cite[page 181]{Ciarlet00} are
\begin{equation}\label{formK}
\mathcal{G}_{\rm{Koiter}}^{\rm{lin}} =  \Big( \frac{1}{2}(\partial_\beta v_\alpha+\partial_\alpha v_\beta)-\sum_{\gamma=1,2}\Gamma_{\alpha\beta}^\gamma v_\gamma-b_{\alpha\beta}v_3 \Big)_{\alpha\beta}\in {\rm Sym}(2),
\end{equation}
and \begin{align}\label{formR}
\mathcal{R}_{\rm{Koiter}}^{\rm{lin}} = & \Big( \partial_{x_\alpha x_\beta}v_3-\sum_{\gamma=1,2}\Gamma_{\alpha\beta}^\gamma \partial_{x_\gamma}v_3-\sum_{\gamma=1,2}b_\alpha^\gamma b_{\gamma\beta}v_3+\sum_{\gamma=1,2}b_{\alpha}^\gamma(\partial_{x_\beta}v_\gamma-\sum_{\tau=1,2}\Gamma_{\beta\gamma}^\tau v_\tau)\\&+\sum_{\gamma=1,2}b_{\beta}^\gamma(\partial_{x_\alpha}v_\gamma-\sum_{\tau=1,2}\Gamma_{\alpha\tau}^\gamma v_\gamma)
+\sum_{\tau=1,2}(\partial _{x_\alpha}b_\beta^\tau+\sum_{\gamma=1,2}\Gamma_{\alpha\gamma}^\tau b_\beta^\gamma-\sum_{\gamma=1,2}\Gamma_{\alpha\beta}^\gamma b_\gamma^\tau)v_\tau\Big)_{\alpha\beta}\in {{\rm Sym}(2)},\notag
\end{align}  respectively. Actually, on one hand, the last form \eqref{formR} of the curvature tensor will be considered when the admissible set of solutions of the variational problem will be defined.  On the other hand, as noticed in \cite[page 175]{Ciarlet2Diff-Geo2005} by considering the form \eqref{equ12} of the change of metric tensor,   we can impose substantially weaker   regularity assumptions  on the mapping $y_0$.

Regarding the existence of the solution, the following results are known,  {see \cite{blouza1999existence} and \cite{Ciarlet00}}.
\begin{theorem}{\rm (Existence results for the linear Koiter model on a general surface) \cite[Theorem 7.1.-1]{Ciarlet00}}\label{thexK}
	Let there be given a domain $\omega\subset \mathbb{R}^2$ and an injective mapping $y_0\in {\rm C}^{3}(\overline{\omega}, \mathbb{R}^3)$ such that the two vectors $a_\alpha=\partial_{x_\alpha}y_0$, $\alpha=1,2$, are linear independent at all points of $\overline{\omega}$.
	Then the variational problem  \eqref{Ap7matrix1} has one and only one solution  in the admissible set of solutions
	\begin{align}\label{admsetK1}
	\mathcal{A}_1:=\{v=(v_1,v_2,v_3)\in {\rm H}^1(\omega,\mathbb{R})\times{\rm H}^1(\omega,\mathbb{R})\times {\rm H}^2(\omega,\mathbb{R}) \,|\,  v_1=v_2=v_3=\bigl\langle \nabla v_3,\nu\bigr\rangle=0 \ \ \text{on}\ \ \partial \omega\},
	\end{align}
	where $\nu$ is the outer unit normal to $\partial \omega$.
\end{theorem}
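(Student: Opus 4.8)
The plan is to recast the minimisation of the Koiter functional \eqref{Ap7matrix1} as the problem of finding the unique critical point of a symmetric, continuous and coercive bilinear form on the Hilbert space $\mathcal{A}_1$, and then to invoke the Lax--Milgram lemma. Write the quadratic part of \eqref{Ap7matrix1} as $B(v,v)$ with
\[
B(v,w)=\int_\omega\Big\{h\,\mathcal{W}_{\rm shell}^{\infty}\big(G(v),G(w)\big)+\tfrac{h^3}{12}\,\mathcal{W}_{\rm shell}^{\infty}\big(R(v),R(w)\big)\Big\}\,{\rm det}(\nabla y_0|n_0)\,{\rm d}a ,
\]
where $G(v):=[\nabla\Theta]^{-T}[\mathcal{G}_{\rm Koiter}^{\rm lin}(v)]^\flat[\nabla\Theta]^{-1}$, $R(v):=[\nabla\Theta]^{-T}[\mathcal{R}_{\rm Koiter}^{\rm lin}(v)]^\flat[\nabla\Theta]^{-1}$, and let $L$ collect the load terms, which are linear and bounded by assumption. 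Since $y_0\in {\rm C}^3(\overline\omega,\mathbb{R}^3)$ with $a_1=\partial_{x_1}y_0$, $a_2=\partial_{x_2}y_0$ linearly independent on the compact set $\overline\omega$, the geometric coefficients of the problem --- the entries of $[\nabla\Theta]^{-1}$ and $[\nabla\Theta]^{-T}$, the metric ${\rm I}_{y_0}$, $\det{\rm I}_{y_0}\ge c_0>0$, the Christoffel symbols $\Gamma^\gamma_{\alpha\beta}$, the components $b_{\alpha\beta}$, $b^\alpha_\beta$, and the derivatives $\partial_{x_\alpha}b^\tau_\beta$ appearing in \eqref{formR} --- are continuous, hence bounded on $\overline\omega$. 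Reading $\mathcal{G}_{\rm Koiter}^{\rm lin}$ and $\mathcal{R}_{\rm Koiter}^{\rm lin}$ through the explicit formulas \eqref{formK}, \eqref{formR} as first- and second-order differential operators in $v$ with bounded coefficients, one obtains at once the continuity of $B$ on $\mathcal{A}_1$ (via Cauchy--Schwarz) and the continuity of $L$.

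The heart of the matter is the coercivity estimate
\[
B(v,v)\ \ge\ c\big(\norm{v_1}_{H^1(\omega)}^2+\norm{v_2}_{H^1(\omega)}^2+\norm{v_3}_{H^2(\omega)}^2\big)\qquad\forall\,v\in\mathcal{A}_1 .
\]
Using the pointwise coercivity of $\mathcal{W}_{\rm shell}^{\infty}$ on ${\rm Sym}(3)$ (as in \eqref{pozitivdef}, with $\mu>0$ and $-\tfrac12<\nu<\tfrac12$) together with the uniform invertibility of $[\nabla\Theta]$, this reduces to the \emph{Korn inequality on the surface} $y_0(\omega)$:
\[
\norm{\mathcal{G}_{\rm Koiter}^{\rm lin}(v)}_{L^2(\omega)}^2+\norm{\mathcal{R}_{\rm Koiter}^{\rm lin}(v)}_{L^2(\omega)}^2\ \ge\ c\big(\norm{v_1}_{H^1(\omega)}^2+\norm{v_2}_{H^1(\omega)}^2+\norm{v_3}_{H^2(\omega)}^2\big)
\]
for all $v\in\mathcal{A}_1$. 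This is where the real work lies, and I would follow the classical three-step route. Step one is the \emph{infinitesimal rigid displacement lemma on a surface}: if $\mathcal{G}_{\rm Koiter}^{\rm lin}(v)=0$ and $\mathcal{R}_{\rm Koiter}^{\rm lin}(v)=0$ on $\omega$, then $v$ is the restriction to $y_0(\omega)$ of an infinitesimal rigid motion of $\mathbb{R}^3$, i.e. $v=a+b\times y_0$ with constant $a,b\in\mathbb{R}^3$; this is the step that genuinely uses the ${\rm C}^3$-regularity of $y_0$, through the Gau\ss\ and Weingarten formulas. Step two is a Korn inequality \emph{without} boundary conditions on $H^1(\omega)\times H^1(\omega)\times H^2(\omega)$, obtained by a compactness argument in the spirit of the Lions lemma and the Peetre--Tartar lemma, which yields the estimate modulo the finite-dimensional space of infinitesimal rigid motions. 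Step three is to note that the homogeneous boundary conditions $v_1=v_2=v_3=\bigl\langle\nabla v_3,\nu\bigr\rangle=0$ on $\partial\omega$ defining $\mathcal{A}_1$ force such a rigid motion to vanish identically, so the kernel is trivial on $\mathcal{A}_1$ and the full coercivity follows.

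With $B$ symmetric, continuous and $\mathcal{A}_1$-coercive and $L$ continuous on the Hilbert space $\mathcal{A}_1$, the Lax--Milgram lemma provides a unique $v\in\mathcal{A}_1$ with $B(v,w)=L(w)$ for all $w\in\mathcal{A}_1$, and by symmetry of $B$ this $v$ is precisely the unique minimiser of \eqref{Ap7matrix1} over $\mathcal{A}_1$. The single substantial obstacle is the surface Korn inequality, and within it the rigidity lemma of step one; the remaining items are routine estimates with bounded geometric coefficients. The statement is exactly \cite[Theorem 7.1.-1]{Ciarlet00}, to which we refer for the details of the three steps.
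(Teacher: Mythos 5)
Your proposal is correct and follows exactly the route the paper relies on: the paper does not reprove this statement but cites \cite[Theorem 7.1.-1]{Ciarlet00} and notes that the proof rests on the Korn-type inequality on a general surface (Theorem \ref{Kornlr}), which is precisely the Lax--Milgram-plus-surface-Korn argument (rigid displacement lemma, Korn without boundary conditions, kernel killed by the boundary conditions in $\mathcal{A}_1$) that you outline.
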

\begin{theorem}{\rm (Existence results for the linear Koiter model for shells whose middle surfaces have little regularity)  {\cite{blouza1999existence}},\cite[Theorem 7.1.-2]{Ciarlet00}}\label{thexlr}
	Let there be given a domain $\omega\subset \mathbb{R}^2$ and an injective mapping $y_0\in {\rm H}^{2,\infty}(\omega, \mathbb{R}^3)$ such that the two vectors $a_\alpha=\partial_{x_\alpha}y_0$, $\alpha=1,2$,  are linear independent at all points of $\overline{\omega}$.
	Then the variational problem  \eqref{Ap7matrix1} has one and only one solution in the admissible set of solutions
	\begin{align}\label{admsetK2}
	\mathcal{A}_2:=\{v\in {\rm H}_0^1(\omega,\mathbb{R}^3)\,|\, \bigl\langle \partial_{x_\alpha x_\beta}v, n_0\bigr\rangle\in {\rm L}^2(\omega)\}
	\end{align}
\end{theorem}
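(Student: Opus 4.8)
The plan is to recognise the minimisation of \eqref{Ap7matrix1} (together with the continuous linear functional $L$ of the applied loads) as a quadratic problem $v\mapsto\tfrac12 B(v,v)-L(v)$ on a Hilbert space and to invoke the Lax--Milgram theorem; the only non-routine ingredient is a Korn-type inequality on the surface $\omega$ compatible with the weak regularity $y_0\in{\rm H}^{2,\infty}(\omega,\mathbb{R}^3)$. Here $B$ is the symmetric bilinear form obtained by polarising the integrand of \eqref{Ap7matrix1}. I would equip $\mathcal{A}_2$ with the norm $\|v\|^2:=\|v\|_{{\rm H}^1(\omega,\mathbb{R}^3)}^2+\sum_{\alpha,\beta}\|\langle\partial_{x_\alpha x_\beta}v,n_0\rangle\|_{{\rm L}^2(\omega)}^2$ and first check that this makes $\mathcal{A}_2$ a Hilbert space: since $a_1,a_2\in{\rm W}^{1,\infty}(\omega,\mathbb{R}^3)$ are linearly independent on the compact set $\overline\omega$, the normal $n_0=\tfrac{a_1\times a_2}{\lVert a_1\times a_2\rVert}$ lies in ${\rm W}^{1,\infty}(\omega,\mathbb{R}^3)$, so $v\mapsto\langle\partial_{x_\alpha x_\beta}v,n_0\rangle$ maps ${\rm H}^1(\omega,\mathbb{R}^3)$ continuously into $\mathcal{D}'(\omega)$ and the constraint $\langle\partial_{x_\alpha x_\beta}v,n_0\rangle\in{\rm L}^2(\omega)$ cuts out a closed subspace of ${\rm H}^1_0(\omega,\mathbb{R}^3)$.

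\textbf{Continuity.} Here I would work with the intrinsic expressions \eqref{equ11} for $\mathcal{G}_{\rm{Koiter}}^{\rm{lin}}$ and \eqref{equ12}--\eqref{formR} for $\mathcal{R}_{\rm{Koiter}}^{\rm{lin}}$ — the latter being exactly the point of the lemma of Blouza \cite{blouza1994lemme}. In these forms every coefficient that multiplies $v$, $\nabla v$ or $\langle\partial_{x_\alpha x_\beta}v,n_0\rangle$ (the entries of $[\nabla\Theta]^{-1}$, the contravariant vectors $a^\alpha$, $n_0$, the Christoffel symbols $\Gamma^\gamma_{\alpha\beta}$ and the curvature coefficients $b_{\alpha\beta}$, $b_\alpha^\gamma$ and their first derivatives) belongs to ${\rm L}^\infty(\omega)$, because $y_0\in{\rm H}^{2,\infty}$ and ${\rm I}_{y_0}$ is uniformly positive definite on $\overline\omega$. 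Consequently $\|\mathcal{G}_{\rm{Koiter}}^{\rm{lin}}(v)\|\le C|\nabla v|$ and $\|\mathcal{R}_{\rm{Koiter}}^{\rm{lin}}(v)\|\le C(|\langle\partial_{x_\alpha x_\beta}v,n_0\rangle|+|\nabla v|)$ pointwise, and with $\det(\nabla y_0|n_0)\in{\rm L}^\infty(\omega)$ this gives $|B(v,w)|\le C\|v\|\,\|w\|$.

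\textbf{Coercivity (the crux).} This relies on two facts. First, pointwise ellipticity of the energy density: the positive-definiteness of the quadratic form $W_{\rm shell}^\infty$ on ${\rm Sym}(3)$ (the ${\rm Sym}(2)$-analogue of \eqref{pozitivdef}, valid under the standing hypotheses $\mu>0$, $3\lambda+2\mu>0$) together with the fact that $S\mapsto[\nabla\Theta]^{-T}S^\flat[\nabla\Theta]^{-1}$ is a linear embedding of ${\rm Sym}(2)$ into ${\rm Sym}(3)$ with norm and inverse-norm bounded uniformly on $\overline\omega$ yields
\[
B(v,v)\ \ge\ c\int_\omega\Big(\big\|\mathcal{G}_{\rm{Koiter}}^{\rm{lin}}(v)\big\|^2+\big\|\mathcal{R}_{\rm{Koiter}}^{\rm{lin}}(v)\big\|^2\Big)\,{\rm d}a .
\]
Second, a Korn inequality on $\omega$: there is $c_K>0$ such that $\|v\|^2\le c_K\big(\|\mathcal{G}_{\rm{Koiter}}^{\rm{lin}}(v)\|_{{\rm L}^2(\omega)}^2+\|\mathcal{R}_{\rm{Koiter}}^{\rm{lin}}(v)\|_{{\rm L}^2(\omega)}^2\big)$ for all $v\in\mathcal{A}_2$. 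Chaining the two gives $B(v,v)\ge c\,c_K^{-1}\|v\|^2$; together with continuity and symmetry of $B$ and continuity of $L$, the Lax--Milgram theorem (equivalently, the direct method) produces a unique minimiser in $\mathcal{A}_2$.

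\textbf{The Korn inequality --- main obstacle.} I would obtain it in three steps, all designed to need no more than $y_0\in{\rm H}^{2,\infty}$. (i) \emph{Korn without boundary conditions}: $\|v\|\le C(\|v\|_{{\rm L}^2}+\|\mathcal{G}_{\rm{Koiter}}^{\rm{lin}}(v)\|_{{\rm L}^2}+\|\mathcal{R}_{\rm{Koiter}}^{\rm{lin}}(v)\|_{{\rm L}^2})$. Controlling $\nabla v$ in ${\rm L}^2$ from $\mathcal{G}_{\rm{Koiter}}^{\rm{lin}}(v)=\sym[(\nabla y_0)^T\nabla v]$ and $\|v\|_{{\rm L}^2}$ is a variable-coefficient Korn inequality for the membrane strain, provable by a Lions-lemma / compactness argument; once $\nabla v\in{\rm L}^2$ is under control, the identity $\langle\partial_{x_\alpha x_\beta}v,n_0\rangle=(\mathcal{R}_{\rm{Koiter}}^{\rm{lin}}(v))_{\alpha\beta}+\sum_\gamma\Gamma^\gamma_{\alpha\beta}\langle n_0,\partial_{x_\gamma}v\rangle$ read off from \eqref{equ12} bounds the remaining contribution to $\|v\|$ by $\|\mathcal{R}_{\rm{Koiter}}^{\rm{lin}}(v)\|_{{\rm L}^2}+\|\nabla v\|_{{\rm L}^2}$. (ii) \emph{Elimination of the kernel}: the space $\mathcal{K}=\{v\in\mathcal{A}_2:\mathcal{G}_{\rm{Koiter}}^{\rm{lin}}(v)=0,\ \mathcal{R}_{\rm{Koiter}}^{\rm{lin}}(v)=0\}$ of infinitesimal rigid displacements of the surface is finite-dimensional; combining (i) with the compact embedding ${\rm H}^1_0(\omega,\mathbb{R}^3)\hookrightarrow{\rm L}^2(\omega,\mathbb{R}^3)$ and a Peetre--Tartar argument upgrades (i) to the kernel-free inequality on a topological complement of $\mathcal{K}$. (iii) \emph{Boundary conditions}: an infinitesimal rigid displacement lying in ${\rm H}^1_0(\omega,\mathbb{R}^3)$ must vanish (again by the linear independence of $a_1,a_2$ on $\overline\omega$), so $\mathcal{K}=\{0\}$ inside $\mathcal{A}_2$ and the inequality holds on all of $\mathcal{A}_2$. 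The delicate point is to push steps (i)--(ii) through with only ${\rm L}^\infty$ (not continuous) Christoffel symbols and curvature coefficients --- this is precisely what the intrinsic reformulations \eqref{equ12}, \eqref{formR} and the result of Blouza and Le Dret \cite{blouza1999existence} quoted in Theorem \ref{thexlr} achieve; an equivalent route is to deduce the surface Korn inequality from the classical three-dimensional Korn inequality on the Lipschitz shell domain by lifting the midsurface displacement.
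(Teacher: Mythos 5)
Your overall route is the same as the paper's: the paper does not reprove this theorem but quotes it from Blouza--Le Dret/Ciarlet and observes that the proof rests on the little-regularity Korn inequality recorded as Theorem \ref{Kornlr2}, combined with positive definiteness of $W_{\rm shell}^{\infty}$, the uniform bounds on $[\nabla\Theta]^{\pm 1}$ coming from $y_0\in{\rm H}^{2,\infty}$ with $a_1,a_2$ independent on $\overline\omega$, and Lax--Milgram on $\mathcal{A}_2$ equipped with exactly the graph norm you propose. If you simply invoke Theorem \ref{Kornlr2} at the coercivity step, your argument is complete and coincides with the intended proof.

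There is, however, a genuine gap in your own sketch of the Korn inequality, at step (i): the claimed ``variable-coefficient Korn inequality for the membrane strain'', i.e. $\lVert \nabla v\rVert_{{\rm L}^2}\le C\big(\lVert v\rVert_{{\rm L}^2}+\lVert\mathcal{G}_{\rm{Koiter}}^{\rm{lin}}(v)\rVert_{{\rm L}^2}\big)$, is false on a surface. For a purely normal displacement $v=\varphi\, n_0$ one has $\bigl\langle a_\alpha,\partial_{x_\beta}v\bigr\rangle=\varphi\,\bigl\langle a_\alpha,\partial_{x_\beta}n_0\bigr\rangle=-\varphi\, b_{\alpha\beta}$, hence $\mathcal{G}_{\rm{Koiter}}^{\rm{lin}}(v)=-\varphi\,{\rm II}_{y_0}$ is bounded by $\lVert v\rVert_{{\rm L}^2}$ (and vanishes identically for a flat plate), while $\nabla v$ contains $\nabla\varphi\otimes$-terms and can be arbitrarily large; so the membrane strain plus $\lVert v\rVert_{{\rm L}^2}$ cannot control the full gradient, and your two-stage scheme (first $\nabla v$ from $\mathcal{G}$, then the normal second derivatives from $\mathcal{R}$) collapses at its first stage. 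In the actual proofs (Blouza--Le Dret, and Ciarlet's smooth version, Theorem \ref{Kornlr}) the Lions-lemma/compactness step must use $\mathcal{G}_{\rm{Koiter}}^{\rm{lin}}(v)$ and $\mathcal{R}_{\rm{Koiter}}^{\rm{lin}}(v)$ jointly, since the first derivatives of the normal component of $v$ are recovered only through combinations involving the bending strain. The remaining architecture of your sketch -- Korn without boundary conditions, elimination of the finite-dimensional kernel by a Peetre--Tartar argument resting on the infinitesimal rigid displacement lemma with little regularity \cite{blouza1994lemme}, and the vanishing of that kernel under the ${\rm H}^1_0$ boundary condition -- is the standard and correct one, so the flaw is localized but, as written, step (i) would not go through.
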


The proof of Theorem \ref{thexK} is based on the next inequality of Korn's type on a general surface.
\begin{theorem}
	{\rm (Korn-type inequality  on a general surface)\label{Kornineq}\cite[Theorem 2.6.-4]{Ciarlet00}}\label{Kornlr}
	Let there be given a domain $\omega\subset \mathbb{R}^2$ and an injective mapping $y_0\in {\rm C}^{3}(\overline{\omega}, \mathbb{R}^3)$ such that the two vectors $a_\alpha=\partial_{x_\alpha}y_0$ are linear independent at all points of $\overline{\omega}$.
	Let  the change of metric and change of curvature be given by \eqref{formK} and \eqref{formR}, respectively. Then there exists a constant $c=c(\omega, \partial \omega, y_0)>0$ such that
	\begin{align}
	\lVert v_1\rVert_{{\rm H}^1(\omega;\mathbb{R})}^2+\lVert v_2\rVert_{{\rm H}^1(\omega;\mathbb{R})}^2+\lVert v_3\rVert_{{\rm H}^2(\omega;\mathbb{R})}^2\leq c\,\big[\lVert \mathcal{G}_{\rm{Koiter}}^{\rm{lin}} \rVert_{{\rm L}^2(\omega)}^2+\lVert \mathcal{R}_{\rm{Koiter}}^{\rm{lin}} \rVert_{{\rm L}^2(\omega)}^2\big]\qquad \forall \ v\in \mathcal{A}_1.
	\end{align}
\end{theorem}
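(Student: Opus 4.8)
\medskip
\noindent\emph{Proof strategy.} The plan is to follow the classical two-step scheme (cf.\ \cite{Ciarlet00}): first establish the analogue of the inequality \emph{without} boundary conditions — i.e.\ with the lower-order norms $\|v_1\|_{{\rm L}^2}$, $\|v_2\|_{{\rm L}^2}$, $\|v_3\|_{{\rm H}^1}$ added to the right-hand side — valid on the whole space ${\rm H}^1(\omega;\mathbb{R})\times{\rm H}^1(\omega;\mathbb{R})\times{\rm H}^2(\omega;\mathbb{R})$, and then upgrade it to the stated inequality on the closed subspace $\mathcal{A}_1$ by a compactness/contradiction argument resting on an infinitesimal rigid displacement lemma on the surface.

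\noindent\emph{Step 1 (Korn inequality without boundary conditions).} I would prove that there is a constant $c_0=c_0(\omega,\partial\omega,y_0)>0$ such that
\begin{align*}
&\|v_1\|_{{\rm H}^1(\omega)}^2+\|v_2\|_{{\rm H}^1(\omega)}^2+\|v_3\|_{{\rm H}^2(\omega)}^2 \\
&\qquad\leq c_0\Big(\|v_1\|_{{\rm L}^2(\omega)}^2+\|v_2\|_{{\rm L}^2(\omega)}^2+\|v_3\|_{{\rm H}^1(\omega)}^2+\|\mathcal{G}_{\rm{Koiter}}^{\rm{lin}}\|_{{\rm L}^2(\omega)}^2+\|\mathcal{R}_{\rm{Koiter}}^{\rm{lin}}\|_{{\rm L}^2(\omega)}^2\Big)
\end{align*}
for all admissible $v$. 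The argument uses J.\,L.\ Lions' lemma: a distribution on $\omega$ belonging, together with all its first-order partial derivatives, to ${\rm H}^{-1}(\omega)$ in fact lies in ${\rm L}^2(\omega)$, with the corresponding estimate. Reading off \eqref{formK}, the components of $\mathcal{G}_{\rm{Koiter}}^{\rm{lin}}$ equal the symmetrized first derivatives $\tfrac12(\partial_\beta v_\alpha+\partial_\alpha v_\beta)$ up to terms bounded by $\|v_\alpha\|_{{\rm L}^2}+\|v_3\|_{{\rm L}^2}$ (the coefficients $\Gamma^\gamma_{\alpha\beta}$, $b_{\alpha\beta}$ being bounded on $\overline\omega$ because $y_0\in{\rm C}^3(\overline\omega,\mathbb{R}^3)$ with $a_1,a_2$ everywhere independent); hence $\partial_{x_\gamma}(\partial_\beta v_\alpha+\partial_\alpha v_\beta)\in{\rm H}^{-1}(\omega)$, and the classical Korn identity, which writes $\partial_{x_\gamma x_\beta}v_\alpha$ as a linear combination of first derivatives of the symmetrized-gradient components, together with Lions' lemma, gives $v_\alpha\in{\rm H}^1(\omega)$ with the right bound. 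Once $v_\alpha\in{\rm H}^1(\omega)$, formula \eqref{formR} expresses $\partial_{x_\alpha x_\beta}v_3$ as $(\mathcal{R}_{\rm{Koiter}}^{\rm{lin}})_{\alpha\beta}$ plus terms controlled by $\|v_\alpha\|_{{\rm H}^1}$ and $\|v_3\|_{{\rm H}^1}$ (all the coefficients $\Gamma^\gamma_{\alpha\beta}$, $b_\alpha^\gamma$ and $\partial_{x_\alpha}b^\gamma_\beta$ being bounded on $\overline\omega$), whence $v_3\in{\rm H}^2(\omega)$ with the desired estimate.

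\noindent\emph{Step 2 (infinitesimal rigid displacement lemma).} Next I would show that if $\mathcal{G}_{\rm{Koiter}}^{\rm{lin}}=0$ and $\mathcal{R}_{\rm{Koiter}}^{\rm{lin}}=0$ in $\omega$, then the displacement field $\widetilde v:=v_1a^1+v_2a^2+v_3n_0$ is an infinitesimal rigid displacement of $y_0(\overline\omega)$, i.e.\ there exist constant vectors $a,b\in\mathbb{R}^3$ with $\widetilde v=a+b\times y_0$ on $\overline\omega$. The argument mimics the flat case: $\mathcal{G}_{\rm{Koiter}}^{\rm{lin}}=0$ means $\sym[(\nabla y_0)^T\nabla\widetilde v]=0$, and differentiating this relation together with the information carried by $\mathcal{R}_{\rm{Koiter}}^{\rm{lin}}=0$ on the normal part of the second derivatives shows that the $\partial_{x_\alpha}\widetilde v$ are the images of $a_\alpha=\partial_{x_\alpha}y_0$ under one and the same \emph{constant} skew-symmetric matrix; integrating yields the claimed form, the regularity $y_0\in{\rm C}^3$ being used to justify the differentiations.

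\noindent\emph{Step 3 (contradiction) and the main difficulty.} If the theorem were false there would be a sequence $(v^k)\subset\mathcal{A}_1$ with $\|v_1^k\|_{{\rm H}^1}^2+\|v_2^k\|_{{\rm H}^1}^2+\|v_3^k\|_{{\rm H}^2}^2=1$ and $\|\mathcal{G}_{\rm{Koiter}}^{\rm{lin}}(v^k)\|_{{\rm L}^2}^2+\|\mathcal{R}_{\rm{Koiter}}^{\rm{lin}}(v^k)\|_{{\rm L}^2}^2\to 0$. Being bounded in ${\rm H}^1\times{\rm H}^1\times{\rm H}^2$, a subsequence (not relabelled) converges weakly there to some $v$, and by Rellich--Kondrachov $v_\alpha^k\to v_\alpha$ strongly in ${\rm L}^2(\omega)$ and $v_3^k\to v_3$ strongly in ${\rm H}^1(\omega)$; passing to the limit distributionally gives $\mathcal{G}_{\rm{Koiter}}^{\rm{lin}}(v)=0$ and $\mathcal{R}_{\rm{Koiter}}^{\rm{lin}}(v)=0$, while weak continuity of traces gives $v\in\mathcal{A}_1$. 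By Step 2, $v=a+b\times y_0$, and the homogeneous conditions $v_1=v_2=v_3=\langle\nabla v_3,\nu\rangle=0$ on $\partial\omega$ force $a=b=0$, i.e.\ $v=0$. Applying Step 1 to $v^k$ then gives $1=\|v^k\|^2\leq c_0\big(\|v_1^k\|_{{\rm L}^2}^2+\|v_2^k\|_{{\rm L}^2}^2+\|v_3^k\|_{{\rm H}^1}^2+\|\mathcal{G}_{\rm{Koiter}}^{\rm{lin}}(v^k)\|_{{\rm L}^2}^2+\|\mathcal{R}_{\rm{Koiter}}^{\rm{lin}}(v^k)\|_{{\rm L}^2}^2\big)\to 0$, a contradiction; hence the asserted $c$ exists. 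The genuine obstacle is Step 1: it requires the non-elementary Lions lemma and careful bookkeeping to recover \emph{all} first derivatives of $v_\alpha$ (resp.\ second derivatives of $v_3$) from the symmetrized (resp.\ normal) combinations appearing in $\mathcal{G}_{\rm{Koiter}}^{\rm{lin}}$ and $\mathcal{R}_{\rm{Koiter}}^{\rm{lin}}$; the Gauß and Weingarten identities recalled above are precisely what keeps the remaining terms of strictly lower order. Steps 2 and 3 are then comparatively soft.
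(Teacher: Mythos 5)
Your three-step scheme (Korn without boundary conditions via Lions' lemma, infinitesimal rigid displacement lemma on the surface, then a compactness/contradiction argument using the clamping conditions in $\mathcal{A}_1$) is correct and is precisely the classical argument of the source this theorem is quoted from: the paper itself gives no proof but cites \cite[Theorem 2.6.-4]{Ciarlet00}, whose proof follows exactly this route. So your proposal matches the intended proof; nothing is missing beyond the standard details of Step 1 that you already identify.
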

The existence result given by Theorem \ref{thexlr} is based on another Korn-type inequality which is valid for  shells whose middle surfaces have little regularity.
\begin{theorem}{\rm (Korn inequality  on a general surface for shells whose middle surfaces have little regularity)  {\cite{blouza1999existence}}, \cite[Theorem 2.6.-6]{Ciarlet00}}\label{Kornlr2}
	Let there be given a domain $\omega\subset \mathbb{R}^2$ and an injective mapping $y_0\in {\rm H}^{2,\infty}(\omega, \mathbb{R}^3)$ such that the two vectors $a_\alpha=\partial_{x_\alpha}y_0$, $\alpha=1,2$,  are linear independent at all points of $\overline{\omega}$.
	Then there exists a constant $c>0$ such that \begin{align}
	\lVert v\rVert_{{\rm H}^1(\omega;\mathbb{R}^3)}^2+\sum_{\alpha,\beta=1,2}\bigl|\langle \partial_{x_\alpha x_\beta}v, n_0\bigr\rangle_{{\rm L}^2(\omega)}|^2\leq c\,\big[\lVert \mathcal{G}_{\rm{Koiter}}^{\rm{lin}} \rVert_{{\rm L}^2(\omega)}^2+\lVert \mathcal{R}_{\rm{Koiter}}^{\rm{lin}} \rVert_{{\rm L}^2(\omega)}^2\big]\qquad \forall \ v\in \mathcal{A}_2,
	\end{align}
	where   the change of metric and the change of curvature are given by \eqref{equ11} and \eqref{equ12}, respectively (note that 
	$
	\mathcal{G}_{\rm{Koiter}}^{\rm{lin}}\in{\rm L}^2(\omega) \  \text{and}\  \mathcal{R}_{\rm{Koiter}}^{\rm{lin}}\in{\rm L}^2(\omega),\ \ \forall \, v\in \mathcal{A}_2
	$).
\end{theorem}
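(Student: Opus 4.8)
The plan is to reproduce, step by step, the argument that establishes Theorem~\ref{Kornlr} in the ${\rm C}^3$ case, while keeping careful track of which derivatives of $y_0$ are actually used, so that everything survives under the weaker hypothesis $y_0\in {\rm H}^{2,\infty}(\omega,\mathbb{R}^3)$. The decisive point is to work throughout with the \emph{extrinsic} forms \eqref{equ11}--\eqref{equ12} of $\mathcal{G}_{\rm{Koiter}}^{\rm{lin}}$ and $\mathcal{R}_{\rm{Koiter}}^{\rm{lin}}$, in which the displacement enters as the $\mathbb{R}^3$-valued field $v$ and the only coefficients appearing are $a_\alpha=\partial_{x_\alpha}y_0$, $n_0$, $\partial_{x_\beta}a_\alpha$ and $\partial_{x_\beta}n_0$; all of these lie in ${\rm L}^\infty(\omega)$ precisely because $y_0\in {\rm H}^{2,\infty}(\omega,\mathbb{R}^3)$, whereas the intrinsic forms \eqref{formK}--\eqref{formR} exhibiting the Christoffel symbols $\Gamma^\gamma_{\alpha\beta}$ are used only for algebraic manipulations, never for the regularity bookkeeping. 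Three ingredients are needed: (i) a Korn inequality on the surface \emph{without} boundary conditions; (ii) the infinitesimal rigid displacement lemma; and (iii) a compactness argument of Peetre--Tartar type.

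For (i) I would first prove that there is a constant $C>0$ with
\begin{align*}
\lVert v\rVert_{{\rm H}^1(\omega;\mathbb{R}^3)}^2+\sum_{\alpha,\beta=1,2}\lVert\bigl\langle \partial_{x_\alpha x_\beta}v, n_0\bigr\rangle\rVert_{{\rm L}^2(\omega)}^2\leq C\,\Big(\lVert v\rVert_{{\rm L}^2(\omega)}^2+\lVert \mathcal{G}_{\rm{Koiter}}^{\rm{lin}}\rVert_{{\rm L}^2(\omega)}^2+\lVert \mathcal{R}_{\rm{Koiter}}^{\rm{lin}}\rVert_{{\rm L}^2(\omega)}^2\Big)
\end{align*}
for all $v$ in ${\rm V}(\omega):=\{v\in {\rm H}^1(\omega;\mathbb{R}^3)\,|\,\bigl\langle\partial_{x_\alpha x_\beta}v,n_0\bigr\rangle\in {\rm L}^2(\omega)\}$, equipped with the corresponding graph norm (this makes ${\rm V}(\omega)$ a Hilbert space: for a Cauchy sequence $v_n$, the identity $\bigl\langle\partial_{x_\alpha x_\beta}v_n,n_0\bigr\rangle=\partial_{x_\beta}\bigl\langle\partial_{x_\alpha}v_n,n_0\bigr\rangle-\bigl\langle\partial_{x_\alpha}v_n,\partial_{x_\beta}n_0\bigr\rangle$, together with $\partial_{x_\beta}n_0\in {\rm L}^\infty(\omega)$, identifies the ${\rm L}^2$-limit of $\bigl\langle\partial_{x_\alpha x_\beta}v_n,n_0\bigr\rangle$ with $\bigl\langle\partial_{x_\alpha x_\beta}v,n_0\bigr\rangle$). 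To prove the inequality one splits $\nabla v$, via \eqref{equ11}, into the part carrying $\gamma_{\alpha\beta}(v)=\big(\sym[(\nabla y_0)^{T}\nabla v]\big)_{\alpha\beta}$ and a residual infinitesimal-rotation field $S$ (three scalar components, none of them seen by $\gamma$). Differentiating $S$ and invoking the Gau{\ss}--Codazzi relations of the surface makes the terms in $\nabla v$ cancel, so that $\partial_{x_\sigma}S$ is expressed as a linear combination, \emph{with coefficients in ${\rm L}^\infty(\omega)$ only}, of the first derivatives of $\mathcal{G}_{\rm{Koiter}}^{\rm{lin}}$, of $\mathcal{R}_{\rm{Koiter}}^{\rm{lin}}$ and of $v$ itself; since moreover $S$ is, up to ${\rm L}^\infty$ multiples of $v$, a first derivative of $v$, one has $S\in {\rm H}^{-1}(\omega)$ and $\partial_{x_\sigma}S\in {\rm H}^{-1}(\omega)$, with both norms bounded by $\lVert v\rVert_{{\rm L}^2(\omega)}+\lVert\mathcal{G}_{\rm{Koiter}}^{\rm{lin}}\rVert_{{\rm L}^2(\omega)}+\lVert\mathcal{R}_{\rm{Koiter}}^{\rm{lin}}\rVert_{{\rm L}^2(\omega)}$. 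The lemma of J.L.~Lions (on a Lipschitz domain, a distribution all of whose first derivatives belong to ${\rm H}^{-1}$ belongs itself to ${\rm L}^2$, with the associated estimate) then yields $S\in {\rm L}^2(\omega)$ with the same bound, hence $\nabla v\in {\rm L}^2(\omega)$, and finally $\bigl\langle\partial_{x_\alpha x_\beta}v,n_0\bigr\rangle=\big(\mathcal{R}_{\rm{Koiter}}^{\rm{lin}}\big)_{\alpha\beta}+\sum_{\gamma}\Gamma^\gamma_{\alpha\beta}\bigl\langle\partial_{x_\gamma}v,n_0\bigr\rangle$ is controlled as well.

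For (ii): if $v\in {\rm V}(\omega)$ satisfies $\mathcal{G}_{\rm{Koiter}}^{\rm{lin}}=0$ and $\mathcal{R}_{\rm{Koiter}}^{\rm{lin}}=0$ a.e.\ in $\omega$, then — exactly as in the ${\rm C}^3$ case, but now invoking only that $a_\alpha$, $n_0$ and their first derivatives lie in ${\rm L}^\infty(\omega)$ and that $\omega$ is connected — one deduces that there is a skew-symmetric matrix field $S\in\mathfrak{so}(3)$ with $\nabla v=S\,\nabla y_0$, and that $S$ is constant by the compatibility $\partial_{x_\beta}\partial_{x_\alpha}v=\partial_{x_\alpha}\partial_{x_\beta}v$ together with the curvature condition; integrating gives $v(x_1,x_2)=c+d\times y_0(x_1,x_2)$ for constant vectors $c,d\in\mathbb{R}^3$, so the kernel of $(\mathcal{G}_{\rm{Koiter}}^{\rm{lin}},\mathcal{R}_{\rm{Koiter}}^{\rm{lin}})$ on ${\rm V}(\omega)$ is the $6$-dimensional space of infinitesimal rigid displacements. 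For $v\in\mathcal{A}_2$ the trace condition $v|_{\partial\omega}=0$ on the nonempty relatively open boundary piece, where $a_1$, $a_2$ are linearly independent, forces $c=d=0$, i.e.\ $v=0$; hence $(\mathcal{G}_{\rm{Koiter}}^{\rm{lin}},\mathcal{R}_{\rm{Koiter}}^{\rm{lin}})$ has trivial kernel on $\mathcal{A}_2={\rm V}(\omega)\cap {\rm H}_0^1(\omega,\mathbb{R}^3)$.

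For (iii): if the asserted inequality failed, there would be $v_n\in\mathcal{A}_2$ with $\lVert v_n\rVert_{{\rm H}^1}^2+\sum_{\alpha,\beta}\lVert\bigl\langle\partial_{x_\alpha x_\beta}v_n,n_0\bigr\rangle\rVert_{{\rm L}^2}^2=1$ and $\lVert\mathcal{G}_{\rm{Koiter}}^{\rm{lin}}(v_n)\rVert_{{\rm L}^2}+\lVert\mathcal{R}_{\rm{Koiter}}^{\rm{lin}}(v_n)\rVert_{{\rm L}^2}\to0$; by (i) the sequence is bounded in ${\rm V}(\omega)$, so after extraction $v_n\rightharpoonup v$ weakly in ${\rm V}(\omega)$ and $v_n\to v$ strongly in ${\rm L}^2(\omega;\mathbb{R}^3)$ by Rellich's theorem, with $v\in\mathcal{A}_2$, $\mathcal{G}_{\rm{Koiter}}^{\rm{lin}}(v)=0$ and $\mathcal{R}_{\rm{Koiter}}^{\rm{lin}}(v)=0$, hence $v=0$ by (ii); but then (i) applied to $v_n$ gives $1\leq C\,(\lVert v_n\rVert_{{\rm L}^2}^2+\lVert\mathcal{G}_{\rm{Koiter}}^{\rm{lin}}(v_n)\rVert_{{\rm L}^2}^2+\lVert\mathcal{R}_{\rm{Koiter}}^{\rm{lin}}(v_n)\rVert_{{\rm L}^2}^2)\to0$, a contradiction. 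I expect the main obstacle to be ingredient (i), and inside it the careful derivation from Gau{\ss}--Codazzi of the differential identity for $\partial_{x_\sigma}S$ in which the would-be $\nabla v$-terms cancel and every surviving coefficient multiplying $v$ undifferentiated is merely in ${\rm L}^\infty(\omega)$ — this is exactly where passing to the extrinsic forms \eqref{equ11}--\eqref{equ12} is essential and where $y_0\in {\rm H}^{2,\infty}(\omega,\mathbb{R}^3)$ is both needed and sufficient — after which the application of Lions' lemma is routine.
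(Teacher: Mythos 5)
A preliminary remark: the paper does not prove Theorem \ref{Kornlr2} at all — it is quoted from \cite{blouza1999existence} and \cite[Thm.~2.6-6]{Ciarlet00} — so your proposal can only be compared with the literature argument it tries to reconstruct. Your overall architecture (Hilbert-space structure of the energy space, an intermediate estimate with a compact lower-order term, the infinitesimal rigid displacement lemma — which is precisely the content of \cite{blouza1994lemme} and may simply be cited — and a Rellich/contradiction step) is the right one, and your steps (ii) and (iii) are fine as sketched.

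The genuine gap sits at the heart of your ingredient (i). You assert that, after differentiating the three rotation scalars $S$ and ``invoking Gau{\ss}--Codazzi'', all $\nabla v$-terms cancel, so that $\partial_{x_\sigma}S$ is a combination, with $L^\infty$ coefficients, of quantities yielding an $H^{-1}$ bound by $\lVert v\rVert_{L^2}+\lVert\mathcal{G}_{\rm Koiter}^{\rm lin}\rVert_{L^2}+\lVert\mathcal{R}_{\rm Koiter}^{\rm lin}\rVert_{L^2}$. That is not what the computation gives. Taking the natural components $t_\alpha=\langle\partial_{x_\alpha}v,n_0\rangle$ and $s=\tfrac12(\langle\partial_{x_1}v,a_2\rangle-\langle\partial_{x_2}v,a_1\rangle)$, the Gau{\ss} and Weingarten formulas yield for instance $\partial_{x_\alpha}t_\beta=(\mathcal{R}_{\rm Koiter}^{\rm lin})_{\alpha\beta}+\sum_\sigma\Gamma^\sigma_{\alpha\beta}t_\sigma-\sum_\gamma b_\alpha^\gamma\langle\partial_{x_\beta}v,a_\gamma\rangle$, and the analogous identity for $\partial_{x_\sigma}s$ retains terms of the form $b_{\alpha\sigma}t_\beta$ and $\Gamma^\gamma_{\gamma\sigma}\,s$ besides $\partial\mathcal{G}$- and $L^\infty\cdot\mathcal{G}$-terms: first-derivative quantities of $v$ survive, multiplied by $\Gamma^\sigma_{\alpha\beta}$ and $b_{\alpha\beta}$, which under $y_0\in{\rm H}^{2,\infty}$ are merely $L^\infty$, not Lipschitz. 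Such products can neither be integrated by parts nor estimated in ${\rm H}^{-1}$ by $\lVert v\rVert_{L^2}$, so the Lions-lemma step only returns the circular bound $\lVert S\rVert_{L^2}\le C(\lVert v\rVert_{L^2}+\lVert\mathcal{G}_{\rm Koiter}^{\rm lin}\rVert_{L^2}+\lVert\mathcal{R}_{\rm Koiter}^{\rm lin}\rVert_{L^2}+\lVert S\rVert_{L^2})$, with no smallness to absorb; your estimate (i) is a true statement, but it does not follow from the sketched computation. Moreover, appealing to the Gau{\ss}--Codazzi \emph{compatibility} equations is both illegitimate and unnecessary here: they involve third derivatives of $y_0$ and hold only distributionally under ${\rm H}^{2,\infty}$; only the Gau{\ss} and Weingarten formulas are available, and they do not produce the claimed cancellation. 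A clean way to close the argument uses the clamped boundary condition built into $\mathcal{A}_2$: the covariant tangential components $u_\alpha=\langle v,a_\alpha\rangle$ belong to ${\rm H}^1_0(\omega)$, so the flat planar Korn inequality controls the whole tangential part of $\nabla v$ (in particular the skew scalar $s$) by $\lVert\mathcal{G}_{\rm Koiter}^{\rm lin}\rVert_{L^2}+\lVert v\rVert_{L^2}$; the normal part $(t_\alpha)$ is bounded in ${\rm H}^1(\omega)$ by the displayed identity, hence compact into ${\rm L}^2$; the contradiction argument with the rigid displacement lemma of \cite{blouza1994lemme} then finishes the proof. Alternatively, if you insist on the Lions-lemma route, you must work with the correctly weighted rotation vector (the $\vartheta_\infty$ of Section \ref{Ccl}, whose drill component carries the factor $1/\sqrt{\det{\rm I}_{y_0}}$, i.e.\ ${\rm C}^{-1}$) and enlarge the compact lower-order term by $\lVert(\langle\partial_{x_\alpha}v,n_0\rangle)\rVert_{L^2}$; as written, step (i) is unsubstantiated and its claimed cancellation is false.
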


\section{The linear constrained Cosserat-shell model}\setcounter{equation}{0}

 \subsection{The deformation measures in the linear constrained Cosserat-shell model}\label{Ccl}

We express the total midsurface deformation  as
\begin{align}
m(x_1,x_2)=y_0(x_1,x_2)+v(x_1,x_2),
\end{align}
with $v:\omega\to \mathbb{R}^3$, the infinitesimal shell-midsurface displacement. For the elastic rotation tensor $ \overline{Q}_{e,s}\in\rm{SO}(3) $ there is a skew-symmetric matrix  \begin{align}
\overline{A}_\vartheta:={\rm Anti}(\vartheta_1,\vartheta_2,\vartheta_3):=\begin{footnotesize}
\begin{pmatrix}
0&-\vartheta_3&\vartheta_2\\
\vartheta_3&0&-\vartheta_1\\
-\vartheta_2&\vartheta_1&0
\end{pmatrix}\end{footnotesize}\in \mathfrak{so}(3), \quad \qquad {\rm Anti}:\mathbb{R}^3\to \mathfrak{so}(3),
\end{align}
where $ \vartheta={\rm axl}( \overline{A}_\vartheta) $ denotes the axial vector of $ \overline{A}_\vartheta $, such that 
$ \overline{Q}_{e,s}:=\exp(\overline{A}_\vartheta)\;= \;\sum_{k=0}^{\infty} \frac{1}{k!} \,\overline{A}_\vartheta^k\; = \;\id_3 + \overline{A}_\vartheta+\textrm{h.o.t.}$

The tensor field $\overline{A}_\vartheta$ is   the infinitesimal elastic  microrotation. Here, ``h.o.t" stands for terms of order higher than linear with respect to $v$ and $\overline{A}_\vartheta$.  

Our aim now is to express all the deformation measures and the linearised models in terms of $v$ and $\overline{A}_\vartheta$, as well as in terms of its axial vector $\vartheta$. The following definitions are used to express  these quantities in terms of $\vartheta$.
For any column vector $ q\in \mathbb{R}^3$ and any  matrix $ M=(M_1|M_2|M_3)\in \mathbb{R}^{3\times 3}  $ we define the cross-product \cite{GhibaNeffPartIV}
\begin{align} \label{ec1} 
q\times M :=&\, (q\times M_1\,|\,q\times M_2\,|\,q\times M_3) \qquad\qquad(\mbox{operates on columns})\quad \textrm{and}
\vspace{6pt}\\
M^T \times q^T:=&\, - (q\times M)^T \qquad\qquad \qquad\qquad  \qquad(\mbox{operates on rows}).\notag
\end{align}
Note that $ M $ can also be a $ 3\times 2 $ matrix, the definition remains the same.
We note some properties of these operations: for any column vectors $ q_1, q_2\in \mathbb{R}^{3}  $ and any   matrices $ M , N \in \mathbb{R}^{3\times 3} $ (or $\in \mathbb{R}^{3\times 2} $) we have
\begin{align} \label{ec2} 
(q_1\times M )\,q_2 = &\, \,q_1\times (M \,    q_2)\,,
\qquad 
q_1^T(q_2\times M ) =  \,(q_1\times q_2)^T\, M =  (q_1^T\times  q_2^T)\,M =  -q_2^T(q_1\times M ) 
\end{align}
and, more general
\begin{align} \label{ec3} 
(q_1\times M )N = &\, q_1\,\times (M\, N)\,,
\qquad 
N^T(q_2\times M ) = \, -(q_2\times N)^T\, M  = (N^T\times  q_2^T)\,M\,.
\end{align}
With these relations, the infinitesimal microrotation $\overline{A}_\vartheta$ can be expressed as 
\begin{equation} \label{ec4} 
\overline{A}_\vartheta :=  \vartheta\times \id_3 = \id_3\times \vartheta^T\in \mathbb{R}^{3\times 3}.
\end{equation}

For $m=y_0+v$,  starting from 
\begin{align}\label{miul}
\mathcal{E}_{ \infty }
&=\sqrt{[\nabla\Theta \,]^{-T}\,{\rm I}_m^{\flat }\,[\nabla\Theta \,]^{-1}}-
\sqrt{[\nabla\Theta \,]^{-T}\,{\rm I}_{y_0}^{\flat }\,[\nabla\Theta \,]^{-1}}=\mathcal{E}_{ \infty }^{\rm{lin}}+\textrm{h.o.t.},
\end{align}
we find the expression of the linear approximation of $\mathcal{E}_{ \infty }$, given by
\begin{align}\label{miul2}
\ \,
[\nabla\Theta \,]^{-T}
\begin{footnotesize}\left( \begin{array}{c|c}
\mathcal{G}_{\infty}^{\rm{lin}} & 0 \vspace{4pt}\\
0  & 0
\end{array} \right)\end{footnotesize} [\nabla\Theta \,]^{-1}&=\mathcal{E}_{ \infty }^{\rm{lin}}=\frac{1}{2}\sqrt{[\nabla\Theta \,]^{-T}\,{\rm I}_{y_0}^{\flat }\,[\nabla\Theta \,]^{-1}}^{-1}\,[\nabla\Theta \,]^{-T}\left( (\nabla y_0)^T\nabla v+(\nabla v)^T\nabla y_0\right)^\flat[\nabla\Theta \,]^{-1}\notag
\\
&=\frac{1}{2}[\nabla\Theta \,]^{-T}\left( (\nabla y_0)^T\nabla v+(\nabla v)^T\nabla y_0\right)^\flat[\nabla\Theta \,]^{-1}\\&=[\nabla\Theta \,]^{-T}[\underbrace{\sym ((\nabla y_0)^T\nabla v)}_{:=\,\mathcal{G}_{ \infty }^{\rm lin}}]^\flat[\nabla\Theta \,]^{-1} ,\notag
\end{align}
where $\mathcal{G}_{\infty}^{\rm{lin}} $ denotes the linear approximation of $\mathcal{G}_{\infty}$ and we have used the expansion  $\sqrt{x+\delta x}-\sqrt{\delta x}=\dfrac{x}{2 \,\delta x}+O(x^2)\quad  \forall \, x\geq 0, \delta x>0.$

While we have the identity
\begin{equation}
\label{equ12,5c}
\mathcal{G}_\infty = \mathcal{G}_{\rm{Koiter}} \in {\rm Sym}(2) \qquad \text{and}\qquad \mathcal{G}^{\rm{lin}}_\infty = \mathcal{G}_{\rm{Koiter}}^{\rm{lin}} \in {\rm Sym}(2),
\end{equation} 
 we are only able to find the  relation 
 \begin{equation}
 \label{equ220} 
 \mathcal{R}^{\rm{lin}}_\infty\,  = \,\underbrace{ \mathcal{R}_{\rm{Koiter}}^{\rm{lin}}}_{\in {\rm Sym}(2)} - \,\underbrace{\mathcal{G}_{\rm{Koiter}}^{\rm{lin}}\,{\rm L}_{y_0}}_{\not\in {\rm Sym}(2)} \not\in {\rm Sym}(2)
 \end{equation}  between $ \mathcal{R}^{\rm{lin}}_\infty$ and $\; \mathcal{R}_{\rm{Koiter}}^{\rm{lin}} $ and not between  $ \mathcal{R}_\infty$ and $\mathcal{R}_{\rm{Koiter}} $. This is not surprising, since only symmetric stress tensors are taken into account in the classical linear Koiter model, i.e., the internal strain energy does not depend on the skew-symmetric part of the considered strain measures (since it is work conjugate to the skew-symmetric part of the stress tensor). In addition,   the linear Koiter model does not consider  extra degrees of freedom.

In order to see how the rotation vector $\vartheta$ depends on $v$ in the linear constrained Cosserat-shell model, let us do something different, and linearise the condition $\mathcal{G}_{\infty} :=\, (\overline{Q}_{\infty} \nabla y_0)^{T} \nabla m- {\rm I}_{y_0}\in {\rm Sym}(2)$  to obtain 
\begin{align}\label{equ13l}
(\nabla y_0)^T(\id-\overline{A}_{\vartheta_\infty} )(\nabla y_0+\nabla v)+\textrm{h.o.t.}=&\,(\nabla y_0+\nabla v)^T(\id+\overline{A}_{\vartheta_\infty} )\, (\nabla y_0)+\textrm{h.o.t.}\qquad \textrm{and}\qquad\\ 
(\id+\overline{A}_{\vartheta_\infty} )  n_0+\textrm{h.o.t.} =&\,n,\notag
\end{align}
i.e.
\begin{align}\label{equ13l2}
(\nabla y_0)^T(\nabla v)-(\nabla y_0)^T\overline{A}_{\vartheta_\infty} \,\nabla y_0+\textrm{h.o.t.}=&\,(\nabla v)^T (\nabla y_0)+(\nabla y_0)^T\overline{A}_{\vartheta_\infty}  (\nabla y_0)+\textrm{h.o.t.}\qquad \textrm{and}\qquad\\ 
n=&\,n_0+\vartheta_\infty   \times n_0+\textrm{h.o.t.}\notag
\end{align}
or alternatively 
\begin{align}\label{equ13l4}
\skw[(\nabla y_0)^T(\nabla v)]=&\,(\nabla y_0)^T[\vartheta_\infty  \times (\nabla y_0)]+\textrm{h.o.t.}\qquad \textrm{and}\\ n=&\,n_0+\vartheta_\infty   \times n_0+\textrm{h.o.t.}\notag
\end{align}

Using that $\mathcal{T}_\infty^{\rm lin}+{\rm h.o.t.}=\mathcal{T}_\infty=0$, i.e. $ \, (\overline{Q}_{\infty}  n_0)^T (\nabla m)=0 $, and that the linear approximation of the {\it transverse shear vector} $\mathcal{T}_\infty=(\overline{Q}_{\infty}  n_0)^T (\nabla m)$ is given by \cite{GhibaNeffPartIV} \begin{align}\label{ec13}\mathcal{T}^{\rm{lin}}_\infty=  n_0^T ( \nabla v - \overline{A}_{\vartheta_\infty}\nabla y_0)& \qquad\Leftrightarrow \qquad \mathcal{T}^{\rm{lin}}_\infty =  n_0^{T} ( \nabla v - \vartheta_\infty\times\nabla y_0 )  \notag
\\&\qquad\Leftrightarrow \qquad \mathcal{T}^{\rm{lin}}_\infty  = n_0^{T}  \nabla v + \vartheta^T_\infty(n_0\times\nabla y_0 ) \;=\;  
n_0^{T}  \nabla v + (\vartheta_\infty\times n_0)^T\nabla y_0 \;,
\end{align} we find
\begin{align} \label{equ17}  
 (\vartheta_\infty \times n_0)^T\nabla y_0=-n_0^{T}  \nabla v  \;\  &\Leftrightarrow \  \bigl\langle a_\alpha,\vartheta_\infty \times n_0\bigr\rangle = - \bigl\langle n_0, \partial_{x_\alpha}v\bigr\rangle\;\  \Leftrightarrow \
\vartheta_\infty \times n_0 = -\dd\sum_{\alpha=1,2}\bigl\langle n_0, \partial_{x_\alpha}v\bigr\rangle\, a^\alpha.
\end{align}
Hence, we obtain finally
\begin{align}
&n=n_0-\dd\sum_{\alpha=1,2}\bigl\langle n_0, \partial_{x_\alpha}v\bigr\rangle\, a^\alpha+\textrm{h.o.t.}
\end{align}

We can also express the $ 2\times 2 $ skew-symmetric matrix $ {\rm C}=  \sqrt{\det {\rm I}_{y_0}}\begin{footnotesize}\begin{pmatrix}
	0 & 1
	\\
	-1  & 0
	\end{pmatrix}\end{footnotesize}   $ using the cross product. 
	Similar as in \cite{GhibaNeffPartIV}, we have 
	\begin{equation} \label{ec8} 
	(a^1\,|\,a^2)\; {\rm C}  \; = - n_0\times \nabla y_0
	\,.
	\end{equation}
	Thus, we deduce
	\begin{equation} \label{ec9} 
	{\rm C}  \; = - (\nabla y_0)^T(n_0\times \nabla y_0) = (n_0\times \nabla y_0)^T(\nabla y_0)\,,
	\end{equation}
	and  it holds
	\begin{equation} \label{ec9,5} 
	\bigl\langle \vartheta_\infty ,n_0\bigr\rangle\,{\rm C}  \; = - (\nabla y_0)^T(\vartheta_\infty \times \nabla y_0) = (\vartheta_\infty \times \nabla y_0)^T(\nabla y_0)\,.
	\end{equation}
	
 Let us now obtain an alternative form of $\mathcal{G}^{\rm lin}_\infty$.
	We have
	\begin{equation}\label{equ3}
	\begin{array}{rcl}
	\mathcal{G}^{\rm lin}_\infty &=&    (\nabla y_0)^{T} ( \nabla v - \overline{A}_{\vartheta_\infty}\nabla y_0 ) \;=\; (\nabla y_0)^{T} ( \partial_{x_1} u + a_1\times \vartheta_\infty\;|\;  \partial_{x_2} u + a_2\times \vartheta_\infty) 
	\vspace{6pt}\\
	&=&  (\nabla y_0)^{T}(\nabla v) + (a_1\,|\,a_2)^T(  a_1\times \vartheta_\infty\;|\;    a_2\times \vartheta_\infty)\vspace{1mm}= (\nabla y_0)^{T}(\nabla v) + \begin{footnotesize}
	\begin{pmatrix}
	0 & \bigl\langle \vartheta_\infty, a_1\times a_2\bigr\rangle
	\\
	-\bigl\langle \vartheta_\infty, a_1\times a_2\bigr\rangle & 0
	\end{pmatrix}\end{footnotesize}]
	\vspace{6pt}\\
	&=& (\nabla y_0)^{T}(\nabla v) + \bigl\langle\vartheta_\infty, n_0\bigr\rangle\begin{footnotesize}
	\begin{pmatrix}
	0 & \sqrt{\det {\rm I}_{y_0}}
	\\
	-\sqrt{\det {\rm I}_{y_0}}  & 0
	\end{pmatrix}\end{footnotesize} =   (\nabla y_0)^{T}(\nabla v) + \bigl\langle\vartheta_\infty, n_0\bigr\rangle\, {\rm C}.
	\end{array}
	\end{equation}
	Remember  that ${\rm C}$  is 
	a $ 2\times 2  $ skew-symmetric matrix. 
Using in the following \eqref{equ3}  and the condition $\mathcal{G}^{\rm{lin}}_
\infty\in {\rm Sym}(2)$ (i.e. $ \mbox{skew}\, (\mathcal{G}^{\rm{lin}}_\infty) =0 $), we have
\begin{align}
\label{equ18} 
\bigl\langle\vartheta_\infty ,n_0\bigr\rangle\,{\rm C} = &\, -\mbox{skew}\,\big[ (\nabla y_0)^{T}(\nabla v)\big]\quad \Leftrightarrow \quad \bigl\langle\vartheta_\infty ,n_0\bigr\rangle\id_2 = -\mbox{skew}\,\big[ (\nabla y_0)^{T}(\nabla v)\big]\,{\rm C}^{-1}\\\quad \Leftrightarrow \quad 2\,\bigl \langle\vartheta_\infty &,n_0\bigr\rangle = \, -\tr(\mbox{skew}\,\big[ (\nabla y_0)^{T}(\nabla v)\big]\,{\rm C}^{-1})\quad \Leftrightarrow \quad \bigl \langle\vartheta_\infty ,n_0\bigr\rangle = \, -\frac{1}{2}\,\tr(\mbox{skew}\,\big[ (\nabla y_0)^{T}(\nabla v)\big]\,{\rm C}^{-1}).\notag
\end{align}
Thus, the rotation vector $ \vartheta_\infty  $ is completely determined by the relations \eqref{equ17} and \eqref{equ18} as a function of the midsurface displacement $ v $, by knowing its component in the direction of $n_0$ and its components in the tangent plane.

Further,  we can write
\begin{equation}
\label{equ19} 
\mathcal{R}^{\rm{lin}}_\infty  = 
(\nabla y_0)^{T} ( \vartheta_\infty \times \partial_{x_1}n_0\;|\; \vartheta_\infty \times \partial_{x_1}n_0) - (\nabla y_0)^{T} \big( \partial_{x_1}(\vartheta_\infty \times n_0)\;|\; \partial_{x_2}(\vartheta_\infty \times n_0) \big).
\end{equation}
To compute the last term in \eqref{equ19} we use \eqref{equ17} and write
\begin{align} 
-\partial_{x_\beta}(\vartheta_\infty \times n_0) =&\,  \partial_{x_\beta}\big[\dd\sum_{\alpha=1,2}\bigl\langle n_0, \partial_{x_\alpha}v\bigr\rangle a^\alpha\big] \;=\; \dd\sum_{\alpha=1,2}\partial_{x_\beta}[\bigl\langle n_0,\partial_{x_\alpha} v\bigr\rangle\, a^\alpha]  + \dd\sum_{\alpha=1,2} [ \bigl\langle n_0, \partial_{x_\alpha}v\bigr\rangle\partial_{x_\beta}a^\alpha
]\vspace{6pt}\\
= & \dd\sum_{\alpha=1,2}[\bigl\langle n_0, \partial_{x_\alpha x_\beta}v\bigr\rangle\, a^\alpha] +\dd\sum_{\alpha=1,2} [\bigl\langle\partial_{x_\beta} n_0, \partial_{x_\alpha} v\bigr\rangle a^\alpha] + \dd\sum_{\alpha=1,2}[\bigl\langle n_0, \partial_{x_\alpha}v\bigr\rangle ( -\dd\sum_{\gamma=1,2} \Gamma^\alpha_{\gamma\beta}\,a^\gamma + b^\alpha_\beta\, n_0)]
\vspace{6pt}\notag\\
= &\dd\sum_{\alpha=1,2}[\bigl\langle n_0,\partial_{x_\alpha x_\beta}v - \dd\sum_{\gamma=1,2}\Gamma^\gamma_{\alpha\beta}\partial_{x_\gamma} v\bigr\rangle a^\alpha] - \dd\sum_{\alpha,\gamma=1,2}[\bigl\langle a_\gamma, \partial_{x_\alpha}v\bigr\rangle b^\gamma_\beta\, a^\alpha] + \dd\sum_{\alpha=1,2}[\bigl\langle n_0, \partial_{x_\alpha}v\bigr\rangle\, b^\alpha_\beta\, n_0]\;,\notag
\end{align}
due to Gau\ss  \ and Weingarten formulas \cite{DoCarmo}. Therefore, we obtain 
\begin{equation} \label{equ20} 
\begin{array}{rcl}
- (\nabla y_0)^{T} \big( \partial_{x_1}(\vartheta_\infty \times n_0)\;|\; \partial_{x_2}(\vartheta_\infty \times n_0) \big) &=&   
\mathcal{R}_{\rm{Koiter}}^{\rm{lin}} - (\partial_{x_1} v\,|\, \partial_{x_2}v)^T\, (a_1\,|\, a_2)
\begin{footnotesize}\begin{pmatrix}
b^1_1  & b^1_2
\vspace{4pt}\\
b^2_1  & b^2_2
\end{pmatrix}\end{footnotesize}
\vspace{6pt}\\
&= &  \mathcal{R}_{\rm{Koiter}}^{\rm{lin}} - \big[ (\nabla y_0)^{T}(\nabla v)\big]^T\,{\rm L}_{y_0}\;.
\end{array}
\end{equation}
To compute the first term in \eqref{equ19} we decompose $ \; \vartheta_\infty = \bigl\langle \vartheta_\infty , n_0\bigr\rangle\, n_0 + \dd\sum_{\alpha=1,2}[\bigl\langle\vartheta_\infty , a^\alpha\bigr\rangle\, a_\alpha]\; $ and write
\begin{equation} 
\vartheta_\infty \times\partial_{x_\beta} n_0= \bigl\langle\vartheta_\infty , n_0\bigr\rangle n_0\times \partial_{x_\beta}n_0  + \dd\sum_{\alpha=1,2}[\bigl\langle\vartheta_\infty , a^\alpha\bigr\rangle a_\alpha\times \partial_{x_\beta}n_0] \;.
\end{equation}
Using
\begin{equation} n_0\times \partial_{x_\beta} n_0 = n_0\times (-\sum_{\alpha=1,2} b^\alpha_\beta\,a_\alpha) = \,\sqrt{\det {\rm I}_{y_0}} \;(-b^1_\beta\,a^2 + b^2_\beta\,a^1) \end{equation}
and
\begin{equation} a_\alpha\times \partial_{x_\beta}n_0 = a_\alpha\times (-\sum_{\gamma=1,2} b^\gamma_\beta\,a_\gamma) = \, -\sum_{\gamma=1,2} b^\gamma_\beta\,\epsilon_{\alpha\gamma}\,n_0\;, \, \end{equation}
with $\epsilon_{12}=-\epsilon_{21}=1$, this yields
\begin{equation}
\begin{array}{rcl}
(\nabla y_0)^{T} ( \vartheta_\infty \times \partial_{x_1}n_0\;|\; \vartheta_\infty \times \partial_{x_2}n_0 )  &=&   
\sqrt{\det {\rm I}_{y_0}} \;\bigl\langle \vartheta_\infty , n_0\bigr\rangle \,(\nabla y_0)^{T} \,(-b^1_1\,a^2 + b^2_1\,a^1\;|\; -b^1_2\,a^2 + b^2_2\,a^1 ) 
\vspace{6pt}\\
&= &   \sqrt{\det {\rm I}_{y_0}} \;\bigl\langle \vartheta_\infty , n_0\bigr\rangle \,(\nabla y_0)^{T} \,(- a^2  \,|\,  a^1 ) \begin{footnotesize}\begin{pmatrix}
b^1_1  & b^1_2
\vspace{4pt}\\
b^2_1  & b^2_2
\end{pmatrix}\end{footnotesize}
\vspace{6pt}\\
&= &   \bigl\langle \vartheta_\infty , n_0\bigr\rangle \,(\nabla y_0)^{T} \,(a^1  \,|\,  a^2 )\,{\rm C}\,{\rm L}_{y_0} \;=\;  \bigl\langle \vartheta_\infty , n_0\bigr\rangle \,{\rm C}\,{\rm L}_{y_0}
\end{array}
\end{equation}
and inserting here \eqref{equ18} we obtain
\begin{equation}
\label{equ21} 
(\nabla y_0)^{T} ( \vartheta_\infty \times \partial_{x_1}n_0\;|\; \vartheta_\infty \times \partial_{x_2} n_0) \; =\; 
-\mbox{skew}\,\big[ (\nabla y_0)^{T}(\nabla v)\big]\,{\rm L}_{y_0}
.
\end{equation}
Substituting \eqref{equ20} and \eqref{equ21} into \eqref{equ19}, we are left with
\begin{align}
\label{equ21,5} 
\mathcal{R}^{\rm{lin}}_\infty &= \,\, \mathcal{R}_{\rm{Koiter}}^{\rm{lin}} - \mbox{sym}\,\big[ (\nabla y_0)^{T}(\nabla v)\big]\,{\rm L}_{y_0},\qquad \qquad 
\mathcal{G}^{\rm lin}_\infty=\, \mbox{sym}\,\big[ (\nabla y_0)^{T}(\nabla v)\big]=\,\mathcal{G}_{\rm{Koiter}}^{\rm lin},
\end{align}
which can be written by virtue of \eqref{equ12,5c} in the form
\begin{equation}
\label{equ22} 
\mathcal{R}^{\rm{lin}}_\infty\,  = \,\, \mathcal{R}_{\rm{Koiter}}^{\rm{lin}} - \,\mathcal{G}_{\rm{Koiter}}^{\rm{lin}} \,{\rm L}_{y_0}.
\end{equation}
Some direct calculations are also possible, see also \eqref{equ13l2},  since 
\begin{align}
\overline{Q}_\infty=&\,{\rm polar}((\nabla m\,|\, n)[\nabla \Theta]^{-1})={\rm polar}([(\nabla y_0+\nabla v\,|\, n_0+\vartheta_\infty   \times n_0)[\nabla \Theta]^{-1})\notag\\=&\,{\rm polar}([\nabla \Theta+ (\nabla v\,|\,\delta n)[\nabla \Theta]^{-1})=
{\rm polar}(\id_3+ (\nabla v\,|\vartheta_\infty   \times n_0)[\nabla \Theta]^{-1})\\=&
\,\id_3+\skw( (\nabla v\,|\vartheta_\infty   \times n_0)[\nabla \Theta]^{-1})+{\rm h.o.t.},\notag
\\
=&
\,\id_3-\skw( (\nabla v\,|\dd\sum_{\alpha=1,2}\bigl\langle n_0, \partial_{x_\alpha}v\bigr\rangle\, a^\alpha)[\nabla \Theta]^{-1})+{\rm h.o.t.}\notag
\end{align}
which leads to  
\begin{align}
\overline{A}_{\vartheta_\infty}=-\skw( (\nabla v\,|\dd\sum_{\alpha=1,2}\bigl\langle n_0, \partial_{x_\alpha}v\bigr\rangle\, a^\alpha)[\nabla \Theta]^{-1}).
\end{align}
Summarizing, we have the linear approximations of the deformations measures 
\begingroup
\allowdisplaybreaks
\begin{align}\label{eq12l2}
\mathcal{E}_{ \infty }^{\rm{lin}}  = &\quad \  [\nabla\Theta \,]^{-T}
[\mathcal{G}_{\rm{Koiter}}^{\rm{lin}}]^\flat [\nabla\Theta \,]^{-1},\notag\\
\mathrm{C}_{y_0} \mathcal{K}_{ \infty }^{\rm{lin}} = &\,- [\nabla\Theta \,]^{-T}\big[\mathcal{R}_{\rm{Koiter}}^{\rm{lin}} - \,\mathcal{G}_{\rm{Koiter}}^{\rm{lin}} \,{\rm L}_{y_0}\big]^\flat [\nabla\Theta \,]^{-1}\notag\\
\mathrm{C}_{y_0} \mathcal{K}_{ \infty }^{\rm{lin}}{\rm B}_{y_0} = &\, - [\nabla\Theta \,]^{-T}
[
(\mathcal{R}_{\rm{Koiter}}^{\rm{lin}} - \,\mathcal{G}_{\rm{Koiter}}^{\rm{lin}} \,{\rm L}_{y_0}) {\rm L}_{y_0}]^\flat [\nabla\Theta \,]^{-1},
\vspace{10pt}\\
\mathcal{E}_{ \infty }^{\rm{lin}} {\rm B}_{y_0}  + \mathrm{C}_{y_0} \mathcal{K}_{ \infty }^{\rm{lin}} 
= & -\,[\nabla\Theta \,]^{-T} [\mathcal{R}_{\rm{Koiter}}^{\rm{lin}}-\mathbf{2}\,\mathcal{G}_{\rm{Koiter}}^{\rm{lin}} \,{\rm L}_{y_0} ]^\flat
[\nabla\Theta \,]^{-1}\notag,\\
\mathcal{E}_{ \infty }^{\rm{lin}} {\rm B}_{y_0}^2  + \mathrm{C}_{y_0} \mathcal{K}_{ \infty }^{\rm{lin}} {\rm B}_{y_0}
= &\, -[\nabla\Theta \,]^{-T}
[(\mathcal{R}_{\rm{Koiter}}^{\rm{lin}}-\mathbf{2}\,\mathcal{G}_{\rm{Koiter}}^{\rm{lin}} \,{\rm L}_{y_0}){\rm L}_{y_0}]^\flat [\nabla\Theta \,]^{-1}\notag
,\\
\overline{A}_{\vartheta_\infty}&\equiv{\rm Anti}\vartheta_\infty=-\skw( (\nabla v\,|\dd\sum_{\alpha=1,2}\bigl\langle n_0, \partial_{x_\alpha}v\bigr\rangle\, a^\alpha)[\nabla \Theta]^{-1}),\notag\\
\vartheta_\infty &=  -\frac{1}{2}\,\tr(\mbox{skew}\,\big[ (\nabla y_0)^{T}(\nabla v)\big]\,{\rm C}^{-1})\,n_0-\sum_{\alpha=1,2}\bigl\langle n_0, \partial_{x_\alpha}v\bigr\rangle\, a^\alpha\in\mathbb{R}^3,\notag\\ \mathcal{K}_\infty^{\rm{lin}} \,&= \quad \  (\nabla\vartheta_\infty \, |\, 0) \; [\nabla\Theta \,]^{-1},\notag \\ \mathcal{K}_\infty^{\rm{lin}} {\rm B}_{y_0}\,&= \quad\  (\nabla\vartheta_\infty \, |\, 0) \; {\rm L}_{y_0}^\flat[\nabla\Theta \,]^{-1}, \notag\\ \mathcal{K}_\infty^{\rm{lin}} {\rm B}_{y_0}^2\,&= \quad \ (\nabla\vartheta_\infty \, |\, 0) \; ({\rm L}_{y_0}^\flat)^2[\nabla\Theta \,]^{-1}\notag.\notag
\end{align}
\endgroup
\subsection{The constrained linear $O(h^5)$-Cosserat shell model. Conditional existence }
We can now easily obtain the linearised constrained Cosserat $O(h^5)$-shell model  by inserting the linearised deformation measures in the quadratic variational problem of the nonlinear constrained Cosserat model and the obtained  minimization problem is  to find the midsurface displacement vector field
$v:\omega\subset\mathbb{R}^2\to\mathbb{R}^3$  minimizing on $\omega$:
\begin{align}\label{minvarlccond}
I= \int_{\omega}   \,\, \Big[ & \Big(h+{\rm K}\,\dfrac{h^3}{12}\Big)\,
W_{{\rm shell}}^{\infty}\big([\nabla\Theta \,]^{-T}
[\mathcal{G}_{\rm{Koiter}}^{\rm{lin}}]^\flat [\nabla\Theta \,]^{-1} \big)\vspace{2.5mm}\notag\\&+   \Big(\dfrac{h^3}{12}\,-{\rm K}\,\dfrac{h^5}{80}\Big)\,
W_{{\rm shell}}^{\infty}  \big( [\nabla\Theta \,]^{-T} [\mathcal{R}_{\rm{Koiter}}^{\rm{lin}}-\mathbf{2}\,\mathcal{G}_{\rm{Koiter}}^{\rm{lin}} \,{\rm L}_{y_0} ]^\flat
[\nabla\Theta \,]^{-1}\big) 
\vspace{2.5mm}\notag\\&+\dfrac{h^3}{3} \mathrm{ H}\,\mathcal{W}_{{\rm shell}}^{\infty}  \big(  [\nabla\Theta \,]^{-T}
[\mathcal{G}_{\rm{Koiter}}^{\rm{lin}}]^\flat [\nabla\Theta \,]^{-1} , [\nabla\Theta \,]^{-T} [\mathcal{R}_{\rm{Koiter}}^{\rm{lin}}-\mathbf{2}\,\mathcal{G}_{\rm{Koiter}}^{\rm{lin}} \,{\rm L}_{y_0} ]^\flat
[\nabla\Theta \,]^{-1} \big)\notag\\&-
\dfrac{h^3}{6}\, \mathcal{W}_{{\rm shell}}^{\infty}  \big(  [\nabla\Theta \,]^{-T}
[\mathcal{G}_{\rm{Koiter}}^{\rm{lin}}]^\flat [\nabla\Theta \,]^{-1} , [\nabla\Theta \,]^{-T}
[(\mathcal{R}_{\rm{Koiter}}^{\rm{lin}}-\mathbf{2}\,\mathcal{G}_{\rm{Koiter}}^{\rm{lin}} \,{\rm L}_{y_0})\,{\rm L}_{y_0}]^\flat [\nabla\Theta \,]^{-1}\big)\vspace{2.5mm}\notag\\&+ \,\dfrac{h^5}{80}\,\,
W_{\mathrm{mp}}^{\infty} \big([\nabla\Theta \,]^{-T}
[(\mathcal{R}_{\rm{Koiter}}^{\rm{lin}}-\mathbf{2}\,\mathcal{G}_{\rm{Koiter}}^{\rm{lin}} \,{\rm L}_{y_0})\,{\rm L}_{y_0} ]^\flat [\nabla\Theta \,]^{-1}\big)\vspace{2.5mm}\\&+ \Big(h-{\rm K}\,\dfrac{h^3}{12}\Big)\,
W_{\mathrm{curv}}\big( (\nabla\vartheta_\infty \, |\, 0) \; [\nabla\Theta \,]^{-1} \big)    \vspace{2.5mm}\notag\\&+  \Big(\dfrac{h^3}{12}\,-{\rm K}\,\dfrac{h^5}{80}\Big)\,
W_{\mathrm{curv}}\big(  (\nabla\vartheta_\infty \, |\, 0) \; {\rm L}_{y_0}^\flat[\nabla\Theta \,]^{-1} \big)  \vspace{2.5mm}\notag\\&+ \,\dfrac{h^5}{80}\,\,
W_{\mathrm{curv}}\big(   (\nabla\vartheta_\infty \, |\, 0) \; ({\rm L}_{y_0}^\flat)^2[\nabla\Theta \,]^{-1}  \big)\notag
\Big] \,{\rm det}(\nabla y_0|n_0)       \,\mathrm{\rm da}- \overline{\Pi}_\infty(v),\notag
\end{align}
such that
\begin{align}\label{contsyml2}
\mathcal{G}_{\rm{Koiter}}^{\rm{lin}} \,{\rm L}_{y_0} &\in{\rm Sym}(2) \qquad \textrm{and}\qquad 
(\mathcal{R}_{\rm{Koiter}}^{\rm{lin}}-2\,\mathcal{G}_{\rm{Koiter}}^{\rm{lin}} \,{\rm L}_{y_0})\,{\rm L}_{y_0}\in{\rm Sym}(2),
\end{align}
where 
\begingroup\allowdisplaybreaks
\begin{align}  \mathcal{G}_{\rm{Koiter}}^{\rm{lin}}&=\mbox{sym}\,\big[ (\nabla y_0)^{T}(\nabla v)\big]\notag\\
&=\Big( \frac{1}{2}(\partial_\beta v_\alpha+\partial_\alpha v_\beta)-\sum_{\gamma=1,2}\Gamma_{\alpha\beta}^\gamma v_\gamma-b_{\alpha\beta}v_3 \Big)_{\alpha\beta}\notag\\
 \mathcal{R}_{\rm{Koiter}}^{\rm{lin}}&=  \big( \bigl\langle n_0 ,  \partial_{x_\alpha  x_\beta}\,v- \dd\sum_{\gamma=1,2}\Gamma^\gamma_{\alpha\beta}\,\partial_{x_\gamma}\,v\bigr\rangle\big)_{\alpha\beta}\notag\\&=  \Big( \partial_{x_\alpha x_\beta}v_3-\sum_{\gamma=1,2}\Gamma_{\alpha\beta}^\gamma \partial_{x_\gamma}v_3-\sum_{\gamma=1,2}b_\alpha^\gamma b_{\gamma\beta}v_3+\sum_{\gamma=1,2}b_{\alpha}^\gamma(\partial_{x_\beta}v_\gamma-\sum_{\tau=1,2}\Gamma_{\beta\gamma}^\tau v_\tau)\notag\\&\ \ +\sum_{\gamma=1,2}b_{\beta}^\gamma(\partial_{x_\alpha}v_\gamma-\sum_{\tau=1,2}\Gamma_{\alpha\tau}^\gamma v_\gamma)
 +\sum_{\tau=1,2}(\partial _{x_\alpha}b_\beta^\tau+\sum_{\gamma=1,2}\Gamma_{\alpha\gamma}^\tau b_\beta^\gamma-\sum_{\gamma=1,2}\Gamma_{\alpha\beta}^\gamma b_\gamma^\tau)v_\tau\Big)_{\alpha\beta},
\\
W_{{\rm shell}}^{\infty}(  S)  &=   \mu\,\lVert\,   S\rVert^2  +\,\dfrac{\lambda\,\mu}{\lambda+2\mu}\,\big[ \mathrm{tr}   \, (S)\big]^2,\qquad \qquad \qquad 
\mathcal{W}_{{\rm shell}}^{\infty}(  S,  T) =   \mu\,\bigl\langle  S,   T\bigr\rangle+\,\dfrac{\lambda\,\mu}{\lambda+2\mu}\,\mathrm{tr}  (S)\,\mathrm{tr}  (T), \notag\vspace{2.5mm}\\
W_{\mathrm{mp}}^{\infty}(  S)&= \mu\,\lVert  S\rVert^2+\,\dfrac{\lambda}{2}\,\big[ \mathrm{tr}\,   (S)\big]^2 \quad \forall \ S,T\in{\rm Sym}(3), \notag\vspace{2.5mm}\\
W_{\mathrm{curv}}(  X )&=\mu\,{\rm L}_c^2\left( b_1\,\lVert \dev\,\sym \, X\rVert^2+b_2\,\lVert\skw \,X\rVert^2+b_3\,
[\tr(X)]^2\right) \quad \forall\  X\in\mathbb{R}^{3\times 3}\notag
\end{align}
and $\overline{\Pi}_\infty(v)$ is  the linearization of $\overline{\Pi}(m,\overline{Q}_{e,s})$ expressed only in terms of $v$.
\endgroup

We define the following linear  operators (we use the same notations, but we point out the dependence on $v$ of all involved quantities)
\begin{align}\label{operatorscon1}
\mathcal{G}_{\rm{Koiter}}^{\rm{lin}}&:{\rm H}^1_0(\omega, \mathbb{R}^3) \to \mathbb{R}^{2\times 2}, \qquad \qquad  \mathcal{G}_{\rm{Koiter}}^{\rm{lin}}(v)=\mbox{sym}\,\big[ (\nabla y_0)^{T}(\nabla v)\big]\notag\\
\mathcal{R}_{\rm{Koiter}}^{\rm{lin}}&:{\rm H}^2(\omega, \mathbb{R}^3) \to \mathbb{R}^{2\times 2},\qquad\qquad  \mathcal{R}_{\rm{Koiter}}^{\rm{lin}}(v)=  \big( \bigl\langle n_0 ,  \partial_{x_\alpha  x_\beta}\,v- \dd\sum_{\gamma=1,2}\Gamma^\gamma_{\alpha\beta}\,\partial_{x_\gamma}\,v\bigr\rangle\big)_{2\times 2}.
\end{align}

Moreover, we define two sets   of admissible functions 
\begin{align}\label{21l}
\mathcal{A}_{\rm lin}^{\infty}=\Bigg\{v=(v_1,v_2,v_3)&\in {\rm H}^1(\omega,\mathbb{R})\times{\rm H}^1(\omega,\mathbb{R})\times {\rm H}^2(\omega,\mathbb{R}) \,\big|\,  v_1=v_2=v_3=\bigl\langle \nabla v_3,\nu\bigr\rangle=0 \ \ \text{on}\ \ \partial \omega,\notag\\&\nabla \underbrace{[\skw( (\nabla v\,|\dd\sum_{\alpha=1,2}\bigl\langle n_0, \partial_{x_\alpha}v\bigr\rangle\, a^\alpha)[\nabla \Theta]^{-1})]}_{=\, \vartheta_\infty}\in {\rm L}^2(\omega),\\&\mathcal{G}_{\rm{Koiter}}^{\rm{lin}}(v) \,{\rm L}_{y_0} \in{\rm \textbf{Sym}}(2) \quad \textrm{and}\quad 
(\mathcal{R}_{\rm{Koiter}}^{\rm{lin}}(v)-\mathbf{2}\,\mathcal{G}_{\rm{Koiter}}^{\rm{lin}}(v) \,{\rm L}_{y_0})\,{\rm L}_{y_0}\in{\rm \textbf{Sym}}(2)
\Bigg\},\notag
\end{align}
where  $\nu$ is the outer unit normal to $\partial \omega$,
and
\begin{align}\label{21l2}
\widehat{\mathcal{A}}_{\rm lin}^{\infty}=\Bigg\{v\in {\rm H}_0^1(\omega,\mathbb{R}^3)&\,|\, \bigl\langle \partial_{x_\alpha x_\beta}v, n_0\bigr\rangle\in {\rm L}^2(\omega),\ \ \ \nabla [\skw( (\nabla v\,|\dd\sum_{\alpha=1,2}\bigl\langle n_0, \partial_{x_\alpha}v\bigr\rangle\, a^\alpha)[\nabla \Theta]^{-1})]\in {\rm L}^2(\omega),\\&\mathcal{G}_{\rm{Koiter}}^{\rm{lin}}(v) \,{\rm L}_{y_0} \in{\rm \textbf{Sym}}(2) \quad \textrm{and}\quad 
(\mathcal{R}_{\rm{Koiter}}^{\rm{lin}}(v)-2\,\mathcal{G}_{\rm{Koiter}}^{\rm{lin}}(v) \,{\rm L}_{y_0})\,{\rm L}_{y_0}\in{\rm \textbf{Sym}}(2)
\Bigg\},\notag
\end{align}
 depending on the expressions of $\mathcal{G}_{\rm{Koiter}}^{\rm{lin}}$ and $
\mathcal{R}_{\rm{Koiter}}^{\rm{lin}}$ which we consider, i.e.,  \eqref{formK} and \eqref{formR} \textbf{or} \eqref{equ11} and \eqref{equ12}, respectively. Both admissible sets of functions incorporate a weak reformulation of the symmetry constraint in \eqref{contsyml2}, in the sense that all the derivatives are considered now in the sense of  distributions, and the boundary conditions are to be understood in the sense of traces. The admissible set $\mathcal{A}_{\rm lin}^{\infty}$ defined by \eqref{21l} is a closed subset of the admissible set $\mathcal{A}_1$ given by 	\eqref{admsetK1} and usually considered in the linear Koiter theory, while  $\widehat{\mathcal{A}}_{\rm lin}^{\infty}$ defined by \eqref{21l2} is a closed subset of the admissible set $\mathcal{A}_2$ given by 	\eqref{admsetK2}.

\textit{However,  it is not clear a priori if the  sets $\mathcal{A}_{\rm lin}^{\infty}$ and $\widehat{\mathcal{A}}_{\rm lin}^{\infty}$ are non-empty}, since we still do not know if there exists  $v\in \mathcal{A}_1 $ or $v\in \mathcal{A}_2 $  such that $\mathcal{G}_{\rm{Koiter}}^{\rm{lin}} \,{\rm L}_{y_0} \in{\rm \textbf{Sym}}(2)$ and
$(\mathcal{R}_{\rm{Koiter}}^{\rm{lin}}-2\,\mathcal{G}_{\rm{Koiter}}^{\rm{lin}} \,{\rm L}_{y_0})\,{\rm L}_{y_0}\in{\rm \textbf{Sym}}(2)$. The situation is similar to pure bending type models applied on curved surfaces. In general the set of admissible deformations leaving the first fundamental form invariant, may be empty.

For the moment, let us consider the admissible set  $\mathcal{A}_{\rm lin}^{\infty}$. Similarly as in the linear unconstrained Cosserat shell theory, we rewrite the minimization problem in a weak form. To this aim we define the following operators (we use the same notations, but we point out the dependence on $v$ of all involved quantities)
\begin{align}\label{operatorscon}
\mathcal{E}^{\rm{lin}}_\infty&:\mathcal{A}^{\rm lin}_\infty \to \mathbb{R}^{3\times 3},  \qquad \mathcal{E}^{\rm{lin}}_\infty(v)= [\nabla\Theta \,]^{-T}
[\mathcal{G}_{\rm{Koiter}}^{\rm{lin}}(v)]^\flat [\nabla\Theta \,]^{-1}\,= \  [\nabla\Theta \,]^{-T}
(\mbox{sym}\,\big[ (\nabla y_0)^{T}(\nabla v)\big])^\flat [\nabla\Theta \,]^{-1},\notag\\
A_\infty&:\mathcal{A}^{\rm lin}_\infty \to \mathbb{R}^{3\times 3},\qquad  A_\infty(v)=-\skw( (\nabla v\,|\dd\sum_{\alpha=1,2}\bigl\langle n_0, \partial_{x_\alpha}v\bigr\rangle\, a^\alpha)[\nabla \Theta]^{-1}),\\
\mathcal{K}^{\rm{lin}}_\infty&:\mathcal{A}^{\rm lin}_\infty\to \mathbb{R}^{3\times 3},  \quad\ \ \, \mathcal{K}^{\rm{lin}}_\infty (v)=  \Big(\mbox{axl}(\partial_{x_1}A_\infty(v))\,|\, \mbox{axl}(\partial_{x_2}A_\infty(v))\,|\, 0 \Big) \; [\nabla\Theta \,]^{-1},\notag
\end{align}
the bilinear form
\begingroup
\allowdisplaybreaks
\begin{align}
\mathcal{B}^{\rm{lin}}_\infty&:\mathcal{A}^{\rm lin}_\infty\times \mathcal{A}^{\rm lin}_\infty\to \mathbb{R},\notag\\
\mathcal{B}^{\rm{lin}}_\infty(v,\widetilde{v}):=\int_{\omega}   \!\!\Big[  &\Big(h+{\rm K}\,\dfrac{h^3}{12}\Big)\,
\mathcal{W}_{\mathrm{shell}}^\infty\big(    \mathcal{E}^{\rm{lin}}_\infty(v), \mathcal{E}^{\rm{lin}}_\infty(\widetilde{v})  \big)\notag\\&+  \Big(\dfrac{h^3}{12}\,-{\rm K}\,\dfrac{h^5}{80}\Big)\,
\mathcal{W}_{\mathrm{shell}}^\infty  \big(   \mathcal{E}^{\rm{lin}}_\infty(v)  \, {\rm B}_{y_0} +   {\rm C}_{y_0} \mathcal{K}^{\rm{lin}}_\infty(v), \mathcal{E}^{\rm{lin}}_\infty(\widetilde{v})  \, {\rm B}_{y_0} +   {\rm C}_{y_0} \mathcal{K}^{\rm{lin}}_\infty(\widetilde{v})  \big)  \notag\\&
-\dfrac{h^3}{6} \mathrm{ H}\,\mathcal{W}_{\mathrm{shell}}^\infty  \big(  \mathcal{E}^{\rm{lin}}_\infty(v)  ,
\mathcal{E}^{\rm{lin}}_\infty(\widetilde{v}) {\rm B}_{y_0}+{\rm C}_{y_0}\, \mathcal{K}^{\rm{lin}}_\infty(\widetilde{v})  \big)\\
&-\dfrac{h^3}{6} \mathrm{ H}\,\mathcal{W}_{\mathrm{shell}}^\infty  \big(  \mathcal{E}^{\rm{lin}}_\infty(\widetilde{v})  ,
\mathcal{E}^{\rm{lin}}_\infty(v) {\rm B}_{y_0}+{\rm C}_{y_0}\, \mathcal{K}^{\rm{lin}}_\infty(v) \big)\notag\\&+
\dfrac{h^3}{12}\, \mathcal{W}_{\mathrm{shell}}^\infty  \big(  \mathcal{E}^{\rm{lin}}_\infty(v)  ,
( \mathcal{E}^{\rm{lin}}_\infty(\widetilde{v}) {\rm B}_{y_0}+{\rm C}_{y_0}\, \mathcal{K}^{\rm{lin}}_\infty(\widetilde{v}) ){\rm B}_{y_0} \big)\notag
\\&+
\dfrac{h^3}{12}\, \mathcal{W}_{\mathrm{shell}}^\infty  \big(  \mathcal{E}^{\rm{lin}}_\infty(\widetilde{v})  ,
( \mathcal{E}^{\rm{lin}}_\infty(v) {\rm B}_{y_0}+{\rm C}_{y_0}\, \mathcal{K}^{\rm{lin}}_\infty(v) ){\rm B}_{y_0} \big)\notag\\&+ \,\dfrac{h^5}{80}\,\,
\mathcal{W}_{\mathrm{mp}}^\infty \big((  \mathcal{E}^{\rm{lin}}_\infty(v)  \, {\rm B}_{y_0} +  {\rm C}_{y_0} \mathcal{K}^{\rm{lin}}_\infty(v)  )   {\rm B}_{y_0} ,(  \mathcal{E}^{\rm{lin}}_\infty(\widetilde{v})  \, {\rm B}_{y_0} +  {\rm C}_{y_0} \mathcal{K}^{\rm{lin}}_\infty(\widetilde{v})  )   {\rm B}_{y_0}\big)\notag \\&+ \,\Big(h-{\rm K}\,\dfrac{h^3}{12}\Big)\,
\mathcal{W}_{\mathrm{curv}}\big(  \mathcal{K}^{\rm{lin}}_\infty(v), \mathcal{K}^{\rm{lin}}_\infty(\widetilde{v})  \big)  \notag \\& +  \Big(\dfrac{h^3}{12}\,-{\rm K}\,\dfrac{h^5}{80}\Big)\,
\mathcal{W}_{\mathrm{curv}}\big(  \mathcal{K}^{\rm{lin}}_\infty(v)   {\rm B}_{y_0},\mathcal{K}^{\rm{lin}}_\infty(\widetilde{v})    {\rm B}_{y_0} \,  \big) \notag \\&+ \,\dfrac{h^5}{80}\,\,
\mathcal{W}_{\mathrm{curv}}\big(  \mathcal{K}^{\rm{lin}}_\infty(v)    {\rm B}_{y_0}^2,\mathcal{K}^{\rm{lin}}_\infty(\widetilde{v})      {\rm B}_{y_0}^2   \big)
\Big] {\rm det}(\nabla y_0|n_0)  \,  {\rm da}\notag,
\end{align}
\endgroup
and the linear operator
\begin{align}
{\Pi}^{\rm lin}_\infty:\mathcal{A}^{\rm lin}_\infty\to\mathbb{R}, \qquad\qquad  {\Pi}^{\rm lin}_\infty(\widetilde{v})=\overline{\Pi}_\infty(\widetilde{v}).
\end{align}
Therefore, the weak form of the equilibrium problem of the linear theory of constrained Cosserat shells including terms up to order $O(h^5)$ is to find
$v\in\mathcal{A}^{\rm lin}_\infty$ satisfying 
\begin{align}\label{wfproblemcon}
\mathcal{B}^{\rm{lin}}_\infty(v,\widetilde{v})={\Pi}^{\rm lin}_\infty(\widetilde{v}) \qquad \forall\, \widetilde{v}\in\mathcal{A}^{\rm lin}_\infty.
\end{align}

We endow  $\mathcal{A}^{\rm lin}_\infty$  with the norm
\begin{align}
v=(v_1,v_2,v_3)\mapsto \Big(&\lVert v_1\rVert_{{\rm H}^1(\omega;\mathbb{R})}^2+\lVert v_2\rVert_{{\rm H}^1(\omega;\mathbb{R})}^2+\lVert v_3\rVert_{{\rm H}^2(\omega;\mathbb{R})}^2\notag\\&+\lVert \nabla [\skw( (\nabla v\,|\dd\sum_{\alpha=1,2}\bigl\langle n_0, \partial_{x_\alpha}v\bigr\rangle\, a^\alpha)[\nabla \Theta]^{-1})]]\rVert^2_{{\rm L}^2(\omega)}\Big)^{1/2}.
\end{align} 

Let us recall that one decisive point in the proof of the existence result is the coercivity of the internal energy density, see \cite{GhibaNeffPartIII}:
\begin{lemma}\label{propcoerh5} {\rm [Coercivity in the theory including terms up to order $O(h^5)$]} For sufficiently small values of the thickness $h$ such that  
	\begin{align}\label{rcondh5}
	h\max\{\sup_{x\in\omega}|\kappa_1|, \sup_{x\in\omega}|\kappa_2|\}<\alpha \qquad \text{with}\qquad  \alpha<\sqrt{\frac{2}{3}(29-\sqrt{761})}\simeq 0.97083
	\end{align} 
	and for constitutive coefficients  satisfying  $\mu>0$, $2\,\lambda+\mu> 0$, $b_1>0$, $b_2>0$ and $b_3>0$,   the  energy density
	\begin{align}W(\mathcal{E}_{m,s}, \mathcal{K}_{e,s})=W_{\mathrm{memb}}\big(  \mathcal{E}_{m,s} \big)+W_{\mathrm{memb,bend}}\big(  \mathcal{E}_{m,s} ,\,  \mathcal{K}_{e,s} \big)+W_{\mathrm{bend,curv}}\big(  \mathcal{K}_{e,s}    \big)
	\end{align}
	is coercive in the sense that  there exists a constant   $a_1^+>0$  such that
	$	W(\mathcal{E}_{m,s}, \mathcal{K}_{e,s})\,\geq\, a_1^+\, \big( \lVert \mathcal{E}_{m,s}\rVert ^2 + \lVert \mathcal{K}_{e,s}\rVert ^2\,\big)$,
	where
	$a_1^+$ depends on the constitutive coefficients. In fact,  the following inequality holds true
	\begin{align}
	W(\mathcal{E}_{m,s}, \mathcal{K}_{e,s})\,\geq\, a_1^+\, \big( \lVert \mathcal{E}_{m,s}\rVert ^2 +\lVert
	\mathcal{E}_{m,s}{\rm B}_{y_0}+{\rm C}_{y_0}\, \mathcal{K}_{e,s}  \rVert ^2+ \lVert \mathcal{K}_{e,s}\rVert ^2\,\big).
	\end{align}
\end{lemma}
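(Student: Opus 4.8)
\textit{Proof plan.}
The estimate is algebraic and holds pointwise in $(x_1,x_2)\in\overline\omega$, so it suffices to bound the quadratic form $W$ from below, at a fixed point, by $a_1^{+}$ times $\lVert\mathcal{E}_{m,s}\rVert^{2}+\lVert\mathcal{E}_{m,s}{\rm B}_{y_0}+{\rm C}_{y_0}\mathcal{K}_{e,s}\rVert^{2}+\lVert\mathcal{K}_{e,s}\rVert^{2}$, with $a_1^{+}>0$ uniform over $\overline\omega$ once $t:=h\,\max\{\sup_\omega|\kappa_1|,\sup_\omega|\kappa_2|\}<\alpha$. Fix the point, put $\rho:=\max\{|\kappa_1|,|\kappa_2|\}$ (so $h\rho\le t$), and abbreviate $Y_1:=\mathcal{E}_{m,s}$, $Y_2:=\mathcal{E}_{m,s}{\rm B}_{y_0}+{\rm C}_{y_0}\mathcal{K}_{e,s}$, $Y_3:=\mathcal{K}_{e,s}$ (on the constraint set $Y_1$ and $Y_2$ are symmetric). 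I would first collect the ingredients: (i) the pointwise ellipticity \eqref{pozitivdef}, i.e.\ $c_1^{+}\lVert S\rVert^{2}\le W_{{\rm shell}}^{\infty}(S)\le C_1^{+}\lVert S\rVert^{2}$ on ${\rm Sym}(3)$, $c_2^{+}\lVert X\rVert^{2}\le W_{\mathrm{curv}}(X)\le C_2^{+}\lVert X\rVert^{2}$ on $\mathbb{R}^{3\times3}$, plus $W_{\mathrm{mp}}^{\infty}(S)=\mu\lVert\dev S\rVert^{2}+\tfrac{2\mu+3\lambda}{6}[\tr S]^{2}\ge0$ on ${\rm Sym}(3)$ (here $2\mu+3\lambda>0$ follows from $\mu>0$, $2\lambda+\mu>0$), all three forms being simultaneously diagonalised by the Cartan split $\{\mathfrak{sl}(3)\cap{\rm Sym}(3)\}\oplus\mathfrak{so}(3)\oplus\mathbb{R}\,\id_3$; (ii) the Cauchy--Schwarz bound $|\mathcal{W}_{{\rm shell}}^{\infty}(S,T)|\le\sqrt{W_{{\rm shell}}^{\infty}(S)}\,\sqrt{W_{{\rm shell}}^{\infty}(T)}$ for the coupling form; (iii) the geometric contractions $\lVert M{\rm L}_{y_0}\rVert\le\rho\lVert M\rVert$ and $\lVert M{\rm B}_{y_0}\rVert\le\rho\lVert M\rVert$ in the $[\nabla\Theta]$-adapted Frobenius norm — which, by \eqref{eq5}, is precisely the Frobenius norm of the tensors entering the energy — together with $|{\rm H}|\le\rho$ and $|{\rm K}|\le\rho^{2}$. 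From (iii) one gets $\lVert Y_2{\rm B}_{y_0}\rVert\le\rho\lVert Y_2\rVert$, $\lVert Y_3{\rm B}_{y_0}\rVert\le\rho\lVert Y_3\rVert$, $\lVert Y_3{\rm B}_{y_0}^{2}\rVert\le\rho^{2}\lVert Y_3\rVert$, and $1\pm{\rm K}\tfrac{h^{2}}{12}\ge1-\tfrac{t^{2}}{12}>0$, so all thickness weights in \eqref{minvarmc} are positive and the geometry enters only through $t$.

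\textit{Assembling the estimate.}
Bound the two curvature lines of \eqref{minvarmc} from below by $\big(h-{\rm K}\tfrac{h^{3}}{12}\big)c_2^{+}\lVert Y_3\rVert^{2}$ (the other curvature terms being $\ge0$), and keep the membrane line $\big(h+{\rm K}\tfrac{h^{3}}{12}\big)W_{{\rm shell}}^{\infty}(Y_1)$; these already yield contributions $\gtrsim h\lVert Y_1\rVert^{2}$ and $\gtrsim h\lVert Y_3\rVert^{2}$. The remaining shell lines form a quadratic expression in $Y_1,Y_2,Y_2{\rm B}_{y_0}$ whose diagonal part $\big(\tfrac{h^{3}}{12}-{\rm K}\tfrac{h^{5}}{80}\big)W_{{\rm shell}}^{\infty}(Y_2)+\tfrac{h^{5}}{80}W_{\mathrm{mp}}^{\infty}(Y_2{\rm B}_{y_0})\ge0$, and whose two sign-indefinite couplings $-\tfrac{h^{3}}{3}{\rm H}\,\mathcal{W}_{{\rm shell}}^{\infty}(Y_1,Y_2)$ and $+\tfrac{h^{3}}{6}\mathcal{W}_{{\rm shell}}^{\infty}(Y_1,Y_2{\rm B}_{y_0})$ are controlled by completing squares: combining the first coupling with $\tfrac{h^{3}}{12}W_{{\rm shell}}^{\infty}(Y_2)$ produces $\tfrac{h^{3}}{12}W_{{\rm shell}}^{\infty}(Y_2-2{\rm H}Y_1)-\tfrac{h^{3}{\rm H}^{2}}{3}W_{{\rm shell}}^{\infty}(Y_1)$, so the net $Y_1$-weight becomes $h\big(1-\tfrac{h^{2}}{12}(\kappa_1^{2}+\kappa_1\kappa_2+\kappa_2^{2})\big)W_{{\rm shell}}^{\infty}(Y_1)$, while the second coupling is split via $Y_2{\rm B}_{y_0}=(Y_2-2{\rm H}Y_1){\rm B}_{y_0}+2{\rm H}Y_1{\rm B}_{y_0}$ and absorbed, by (ii)--(iii) and Young's inequality, partly into $W_{{\rm shell}}^{\infty}(Y_2-2{\rm H}Y_1)$ and partly into $W_{{\rm shell}}^{\infty}(Y_1)$, the coefficients carrying powers of $t$. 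Buffering these absorptions with $W_{{\rm shell}}^{\infty}$ itself rather than with $\lVert\cdot\rVert^{2}$ keeps the argument tight. Collecting everything, $W\ge h\,\langle\mathbb{Q}(t)\,z,z\rangle$, where $z$ equals $(\lVert Y_1\rVert,h\lVert Y_2\rVert,\lVert Y_3\rVert)$ up to fixed positive factors and $\mathbb{Q}(t)$ is a symmetric $3\times3$ matrix with entries polynomial in $t^{2}$, block-diagonal in $\{Y_1,Y_2\}$ versus $Y_3$ (the curvature energy not coupling the shell energy).

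\textit{The threshold.}
It remains to check that $\mathbb{Q}(t)$ is positive definite for $0\le t<\alpha$. By Sylvester's criterion the $Y_3$-entry is positive for all small $t$, and the binding $\{Y_1,Y_2\}$ principal minor, after clearing positive factors, is
\[
9\,t^{4}-348\,t^{2}+320>0,
\]
equivalently $t^{2}<\tfrac{2}{3}\big(29-\sqrt{761}\big)$, i.e.\ $t<\alpha=\sqrt{\tfrac{2}{3}\big(29-\sqrt{761}\big)}\simeq0.97083$. Then $\mathbb{Q}(t)\succeq\delta(t)\,\id_3$ for some $\delta(t)>0$ on $[0,\alpha)$, which gives $W\ge a_1^{+}\big(\lVert Y_1\rVert^{2}+\lVert Y_2\rVert^{2}+\lVert Y_3\rVert^{2}\big)$ with $a_1^{+}$ depending only on $\mu,\lambda,b_1,b_2,b_3,h$ and on $t$; discarding the $\lVert Y_2\rVert^{2}$-term yields the first inequality.

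\textit{Main obstacle.}
The substance lies in the optimization inside the second step: the way the positive diagonal budget is distributed among the absorptions of the two indefinite couplings (and of the higher-order $W_{\mathrm{mp}}^{\infty}$ term) must be chosen so that $\mathbb{Q}(t)$ stays positive definite up to $t=\alpha$ \emph{exactly} — a crude splitting gives a strictly smaller, and potentially constitutive-dependent, admissible range. Compared with the $O(h^{3})$ model, where $Y_2{\rm B}_{y_0}$ and the $h^{5}$ terms are absent, the extra terms here enter at higher order in $t$ and, in the above grouping, relax rather than tighten the admissibility condition, which is the improvement announced in the Introduction.
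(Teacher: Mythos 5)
Your outline reproduces, at the level of strategy, what the cited source does (the present paper does not prove Lemma \ref{propcoerh5} at all; it quotes it from Proposition 3.3 of \cite{GhibaNeffPartIII}): a pointwise algebraic estimate, the ellipticity bounds \eqref{pozitivdef}, Cauchy--Schwarz/polarization for the mixed terms, contraction bounds for right multiplication by ${\rm B}_{y_0}$ (your reduction to right multiplication by ${\rm L}_{y_0}$ on the lifted $2\times 2$ blocks is correct, since ${\rm L}_{y_0}$ is ${\rm I}_{y_0}$-self-adjoint with eigenvalues $\kappa_1,\kappa_2$), and finally positivity of a small quadratic form in the norms. Some individual steps are genuinely right, e.g.\ the exact completion of the square $-\tfrac{h^3}{3}{\rm H}\,\mathcal{W}_{\rm shell}^{\infty}(Y_1,Y_2)+\tfrac{h^3}{12}W_{\rm shell}^{\infty}(Y_2)=\tfrac{h^3}{12}W_{\rm shell}^{\infty}(Y_2-2{\rm H}Y_1)-\tfrac{h^3{\rm H}^2}{3}W_{\rm shell}^{\infty}(Y_1)$ and the identity ${\rm K}-4{\rm H}^2=-(\kappa_1^2+\kappa_1\kappa_2+\kappa_2^2)$.

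However, the decisive quantitative content of the lemma is not established. The whole point is that coercivity holds up to the explicit, constitutive-independent threshold $\alpha=\sqrt{\tfrac23(29-\sqrt{761})}$, and in your write-up the condition $9t^4-348t^2+320>0$ is simply asserted (its roots match $\alpha$, but no computation produces it), while you yourself defer the "optimization" of how the diagonal budget is split as the main obstacle --- that optimization \emph{is} the proof. Two concrete holes sit exactly where the value of $\alpha$ and its independence of $\mu,\lambda$ are decided. First, "buffering the absorptions with $W_{\rm shell}^{\infty}$ itself" tacitly uses $W_{\rm shell}^{\infty}(S{\rm B}_{y_0})\le\rho^2\,W_{\rm shell}^{\infty}(S)$; the Frobenius part contracts, but the trace part does not ($\tr(S{\rm B}_{y_0})$ is not controlled by $\rho\,|\tr S|$, and $\lambda$ may be negative), so either you fall back on $c_1^+\lVert\cdot\rVert^2\le W_{\rm shell}^{\infty}\le C_1^+\lVert\cdot\rVert^2$ and import the ratio $C_1^+/c_1^+$ --- making the admissible range constitutive-dependent, contrary to the statement --- or you must exploit the specific structure of $W_{\rm shell}^{\infty}$, which you do not do. Second, when ${\rm K}>0$ the $Y_2$-diagonal coefficient is $\tfrac{h^3}{12}-{\rm K}\tfrac{h^5}{80}<\tfrac{h^3}{12}$, yet your completion of the square consumes the full $\tfrac{h^3}{12}W_{\rm shell}^{\infty}(Y_2)$, leaving a negative remainder $-{\rm K}\tfrac{h^5}{80}W_{\rm shell}^{\infty}(Y_2)$ with no identified counterpart: the only remaining $Y_2$-type terms are $\tfrac{h^3}{12}W_{\rm shell}^{\infty}(Y_2-2{\rm H}Y_1)$ (a different argument) and $\tfrac{h^5}{80}W_{\mathrm{mp}}^{\infty}(Y_2{\rm B}_{y_0})$, whose lower bound $\mu\,\kappa_{\min}^2\lVert Y_2\rVert^2$ does not dominate ${\rm K}\,C_1^+\lVert Y_2\rVert^2$ in general. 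Until these absorptions are carried out explicitly and shown to yield positivity precisely under \eqref{rcondh5} with constants depending only on $\mu,\lambda,b_1,b_2,b_3$, the proposal is a plausible plan but not a proof of the stated lemma.
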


For the proof of the existence of the solution we need the next estimate given by the following result.
\begin{lemma}\label{lemmaLy}
Let there be given a domain $\omega\subset \mathbb{R}^2$ and assume  that the initial configuration of the curved shell is defined by  a continuous injective mapping  $y_0\in {\rm C}^{3}(\overline{\omega}, \mathbb{R}^3)$  such that the two vectors $a_\alpha=\partial_{x_\alpha}y_0$, $\alpha=1,2$ are linear independent at all points of $\overline{\omega}$. There exists a constant $c>0$ such that
\begin{align}\label{u10}	\lVert
\mathcal{G}_{\rm{Koiter}}^{\rm{lin}}(v) \rVert^2&+\lVert \mathcal{R}_{\rm{Koiter}}^{\rm{lin}}({v})-2\,\mathcal{G}_{\rm{Koiter}}^{\rm{lin}}({v}) \,{\rm L}_{y_0} \rVert^2\geq c\left(\lVert\mathcal{G}_{\rm{Koiter}}^{\rm{lin}}(v) \rVert^2+\lVert \mathcal{R}_{\rm{Koiter}}^{\rm{lin}}({v})\rVert^2\right),
\end{align}
everywhere in $\omega$.
\end{lemma}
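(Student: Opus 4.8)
The plan is to recognize that \eqref{u10} is in fact a \emph{pointwise algebraic inequality}: fixing $(x_1,x_2)\in\omega$ and abbreviating $G:=\mathcal{G}_{\rm{Koiter}}^{\rm{lin}}(v)$, $R:=\mathcal{R}_{\rm{Koiter}}^{\rm{lin}}(v)$ and $L:={\rm L}_{y_0}$ (all $2\times 2$ matrices evaluated at that point), the claim is that $\lVert G\rVert^2+\lVert R-2\,G\,L\rVert^2\geq c\,(\lVert G\rVert^2+\lVert R\rVert^2)$ with a constant $c$ depending only on $\lVert L\rVert$. So the first step is simply to reduce the statement to this elementary estimate for arbitrary matrices $G,R$ and a fixed $L$.

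For that estimate I would use the triangle inequality together with submultiplicativity of the Frobenius norm: $\lVert R\rVert\leq\lVert R-2\,G\,L\rVert+2\,\lVert G\,L\rVert\leq\lVert R-2\,G\,L\rVert+2\,\lVert L\rVert\,\lVert G\rVert$, hence $\lVert R\rVert^2\leq 2\,\lVert R-2\,G\,L\rVert^2+8\,\lVert L\rVert^2\,\lVert G\rVert^2$. Adding $\lVert G\rVert^2$ to both sides and collecting terms yields $\lVert G\rVert^2+\lVert R\rVert^2\leq\bigl(1+8\,\lVert L\rVert^2\bigr)\lVert G\rVert^2+2\,\lVert R-2\,G\,L\rVert^2\leq\max\{1+8\,\lVert L\rVert^2,\,2\}\bigl(\lVert G\rVert^2+\lVert R-2\,G\,L\rVert^2\bigr)$, which is precisely \eqref{u10} at that point with $c=1/\max\{1+8\,\lVert L\rVert^2,\,2\}$.

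Finally I would promote the pointwise bound to a uniform one. Since $y_0\in{\rm C}^{3}(\overline{\omega},\mathbb{R}^3)$ and $a_1,a_2$ are linearly independent at every point of the compact set $\overline{\omega}$, the metric ${\rm I}_{y_0}$ is continuous and uniformly positive definite on $\overline{\omega}$, so the shape operator ${\rm L}_{y_0}={\rm I}_{y_0}^{-1}{\rm II}_{y_0}$ is continuous on $\overline{\omega}$ and $M:=\sup_{\overline{\omega}}\lVert{\rm L}_{y_0}\rVert<\infty$; taking $c:=1/\max\{1+8\,M^2,\,2\}>0$ completes the proof. The argument is entirely elementary and I do not expect a genuine obstacle; the only point that needs care is that $c$ must be independent of $(x_1,x_2)$, which is exactly what the ${\rm C}^{3}$-regularity of $y_0$ and the compactness of $\overline{\omega}$ provide through the uniform bound on the Weingarten map.
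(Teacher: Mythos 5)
Your proposal is correct and follows essentially the same route as the paper: an elementary pointwise estimate based on the triangle inequality and submultiplicativity of the Frobenius norm, combined with the uniform bound $\sup_{\overline{\omega}}\lVert {\rm L}_{y_0}\rVert<\infty$ coming from the regularity of $y_0$ and the linear independence of $a_1,a_2$ on the compact set $\overline{\omega}$. Your bookkeeping is in fact slightly cleaner than the paper's (you bound $\lVert \mathcal{R}_{\rm Koiter}^{\rm lin}\rVert^2$ directly from above and so avoid the paper's case distinction ${\rm L}_{y_0}=0$ versus ${\rm L}_{y_0}\neq 0$, obtaining the explicit constant $c=1/\max\{1+8M^2,2\}$), but the underlying ideas coincide.
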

\begin{proof}
	We split our discussion. On one hand, in all the points of $\omega$ for which ${\rm L}_{y_0}=0$ it follows that 
	\begin{align}\label{u8}	\lVert
	\mathcal{G}_{\rm{Koiter}}^{\rm{lin}}(v) \rVert^2&+\lVert \mathcal{R}_{\rm{Koiter}}^{\rm{lin}}({v})-2\,\mathcal{G}_{\rm{Koiter}}^{\rm{lin}}({v}) \,{\rm L}_{y_0} \rVert^2=\lVert\mathcal{G}_{\rm{Koiter}}^{\rm{lin}}(v) \rVert^2+\lVert \mathcal{R}_{\rm{Koiter}}^{\rm{lin}}({v})\rVert^2.
	\end{align}
	
On the other hand, 	due to the properties of the Frobenius norm (including sub-multiplicity), in all the points of $\omega$ for which ${\rm L}_{y_0}\neq 0$ we deduce
	\begin{align}\label{u9}	\lVert
	\mathcal{G}_{\rm{Koiter}}^{\rm{lin}}(v) \rVert^2&+\lVert \mathcal{R}_{\rm{Koiter}}^{\rm{lin}}({v})-2\,\mathcal{G}_{\rm{Koiter}}^{\rm{lin}}({v}) \,{\rm L}_{y_0} \rVert^2\notag\\&=\frac{1}{2}\lVert
	\mathcal{G}_{\rm{Koiter}}^{\rm{lin}}(v) \rVert^2+\frac{1}{2}\lVert
	\mathcal{G}_{\rm{Koiter}}^{\rm{lin}}(v) \rVert^2+\lVert \mathcal{R}_{\rm{Koiter}}^{\rm{lin}}({v})-2\,\mathcal{G}_{\rm{Koiter}}^{\rm{lin}}({v}) \,{\rm L}_{y_0} \rVert^2\notag
	\\
		&= \frac{1}{2}\lVert
	\mathcal{G}_{\rm{Koiter}}^{\rm{lin}}(v) \rVert^2+\frac{1}{2\,\lVert 2\,{\rm L}_{y_0} \rVert^2}\lVert
\mathcal{G}_{\rm{Koiter}}^{\rm{lin}}(v) \rVert^2 \lVert 2\,{\rm L}_{y_0} \rVert^2+\lVert \mathcal{R}_{\rm{Koiter}}^{\rm{lin}}({v})-2\,\mathcal{G}_{\rm{Koiter}}^{\rm{lin}}({v}) \,{\rm L}_{y_0} \rVert^2\notag
	\\
	&\geq \frac{1}{2}\lVert
	\mathcal{G}_{\rm{Koiter}}^{\rm{lin}}(v) \rVert^2+\frac{1}{2\,\lVert 2\,{\rm L}_{y_0} \rVert^2}\lVert
	2\,\mathcal{G}_{\rm{Koiter}}^{\rm{lin}}(v)\,{\rm L}_{y_0} \rVert^2+\lVert \mathcal{R}_{\rm{Koiter}}^{\rm{lin}}({v})-2\,\mathcal{G}_{\rm{Koiter}}^{\rm{lin}}({v}) \,{\rm L}_{y_0} \rVert^2
	\\
	&\geq \min\{\frac{1}{2},\frac{1}{2\,\lVert 2\,{\rm L}_{y_0} \rVert^2},1\}\left(\lVert
	\mathcal{G}_{\rm{Koiter}}^{\rm{lin}}(v) \rVert^2+\lVert
	2\,\mathcal{G}_{\rm{Koiter}}^{\rm{lin}}(v)\,{\rm L}_{y_0} \rVert^2+\lVert \mathcal{R}_{\rm{Koiter}}^{\rm{lin}}({v})-2\,\mathcal{G}_{\rm{Koiter}}^{\rm{lin}}({v}) \,{\rm L}_{y_0} \rVert^2\right)\notag\\
&= \frac{1}{2}\min\{1,\frac{1}{\lVert 2\,{\rm L}_{y_0} \rVert^2}\}\left(\lVert
\mathcal{G}_{\rm{Koiter}}^{\rm{lin}}(v) \rVert^2+\lVert
2\,\mathcal{G}_{\rm{Koiter}}^{\rm{lin}}(v)\,{\rm L}_{y_0} \rVert^2+\lVert \mathcal{R}_{\rm{Koiter}}^{\rm{lin}}({v})-2\,\mathcal{G}_{\rm{Koiter}}^{\rm{lin}}({v}) \,{\rm L}_{y_0} \rVert^2\right)\notag.\end{align}
Of course $\lVert 2\,{\rm L}_{y_0} \rVert^2$ depends on the points in $\overline{\omega}$. Since
$y_0\in {\rm C}^{3}(\overline{\omega}, \mathbb{R}^3)$ is such that the two vectors $a_\alpha=\partial_{x_\alpha}y_0$, $\alpha=1,2$ are linear independent at all points of $\overline{\omega}$ it follows that $
\det[\nabla_x\Theta(0)] \geq\, c_0 >0
$ at all points of $\overline{\omega}$. Moreover, we have
\begin{align}
{\lVert {\rm L}_{y_0} \rVert}=\lVert {\rm I}_{y_0}^{-1} \, {\rm II}_{y_0} \rVert\leq \lVert {\rm I}_{y_0}^{-1} \rVert\, \lVert {\rm II}_{y_0} \rVert= \frac{1}{\det  {\rm I}_{y_0}}\lVert {\rm Cof}\,{\rm I}_{y_0}\rVert\, \lVert {\rm II}_{y_0} \rVert
\end{align}
and since $y_0\in {\rm C}^2(\overline{\omega};\mathbb{R}^3)$ implies ${\rm Cof}\,{\rm I}_{y_0}\in {\rm C}^1(\overline{\omega};\mathbb{R}^{2\times 2})$ and ${\rm II}_{y_0} \in {\rm C}(\overline{\omega};\mathbb{R}^{2\times 2})$, together with $\det[\nabla_x\Theta(0)] \geq\, c_0 >0$ it follows that there is $c>0$ independent of $x\in\overline{\omega}$ such that
\begin{align}
\lVert {\rm L}_{y_0} \rVert\leq  \frac{1}{\det  {\rm I}_{y_0}}\lVert {\rm Cof}\,{\rm I}_{y_0}\rVert\, \lVert {\rm II}_{y_0} \rVert<c,
\end{align}
i.e., there exists another constant $c>0$ such that
\begin{align}\label{uLy}
\frac{1}{\lVert 2\,{\rm L}_{y_0} \rVert^2}>c.
\end{align}

	Therefore, using \eqref{u9} and \eqref{uLy} we deduce that there exists a constant $c>0$ such that
		\begin{align}\label{u10l}	\lVert
	\mathcal{G}_{\rm{Koiter}}^{\rm{lin}}(v) \rVert^2&+\lVert \mathcal{R}_{\rm{Koiter}}^{\rm{lin}}({v})-2\,\mathcal{G}_{\rm{Koiter}}^{\rm{lin}}({v}) \,{\rm L}_{y_0} \rVert^2\\&\geq \frac{1}{2}\min\{1,\frac{1}{\lVert 2\,{\rm L}_{y_0} \rVert^2}\}\left(\lVert
	\mathcal{G}_{\rm{Koiter}}^{\rm{lin}}(v) \rVert^2+\lVert
	2\,\mathcal{G}_{\rm{Koiter}}^{\rm{lin}}(v)\,{\rm L}_{y_0} \rVert^2+\lVert \mathcal{R}_{\rm{Koiter}}^{\rm{lin}}({v})-2\,\mathcal{G}_{\rm{Koiter}}^{\rm{lin}}({v}) \,{\rm L}_{y_0} \rVert^2\right)\notag\\&\geq c\,\left(\lVert
	\mathcal{G}_{\rm{Koiter}}^{\rm{lin}}(v) \rVert^2+\lVert
	2\,\mathcal{G}_{\rm{Koiter}}^{\rm{lin}}(v)\,{\rm L}_{y_0} \rVert^2+\lVert \mathcal{R}_{\rm{Koiter}}^{\rm{lin}}({v})-2\,\mathcal{G}_{\rm{Koiter}}^{\rm{lin}}({v}) \,{\rm L}_{y_0} \rVert^2\right).\notag
	\end{align}
Now we apply the reverse triangle inequality to see that 
\begin{align}
\lVert \mathcal{R}_{\rm{Koiter}}^{\rm{lin}}({v})-2\,\mathcal{G}_{\rm{Koiter}}^{\rm{lin}}({v}) \,{\rm L}_{y_0} \rVert\geq\lVert \mathcal{R}_{\rm{Koiter}}^{\rm{lin}}({v})\rVert-	\lVert
2\,\mathcal{G}_{\rm{Koiter}}^{\rm{lin}}(v)\,{\rm L}_{y_0} \rVert,
\end{align}
hence
\begin{align}
	\lVert
2\,\mathcal{G}_{\rm{Koiter}}^{\rm{lin}}(v)\,{\rm L}_{y_0} \rVert+\lVert \mathcal{R}_{\rm{Koiter}}^{\rm{lin}}({v})-2\,\mathcal{G}_{\rm{Koiter}}^{\rm{lin}}({v}) \,{\rm L}_{y_0} \rVert\geq\lVert \mathcal{R}_{\rm{Koiter}}^{\rm{lin}}({v})\rVert.
\end{align}

Inserting this in \eqref{u10l} 
it follows that there exists a positive constant $c>0$ such that
		\begin{align}\label{u10l1}	\lVert
	\mathcal{G}_{\rm{Koiter}}^{\rm{lin}}(v) \rVert^2&+\lVert \mathcal{R}_{\rm{Koiter}}^{\rm{lin}}({v})-2\,\mathcal{G}_{\rm{Koiter}}^{\rm{lin}}({v}) \,{\rm L}_{y_0} \rVert^2 \geq c\left(\lVert\mathcal{G}_{\rm{Koiter}}^{\rm{lin}}(v) \rVert^2+\lVert \mathcal{R}_{\rm{Koiter}}^{\rm{lin}}({v})\rVert^2\right),
	\end{align}
	everywhere in $\omega$, and the proof is complete. 
\end{proof}

After having Lemma \ref{lemmaLy}, other similar arguments as in the unconstrained linear Cosserat shell model allow us to formulate the following existence result.
\begin{theorem}\label{th1cond}{\rm [A conditional existence result for the theory including terms up to order $O(h^5)$  on a general surface]}
Let there be given a domain $\omega\subset \mathbb{R}^2$ and an injective mapping $y_0\in {\rm C}^{3}(\overline{\omega}, \mathbb{R}^3)$  such that the two vectors $a_\alpha=\partial_{x_\alpha}y_0$, $\alpha=1,2$, are linear independent at all points of $\overline{\omega}$. Assume that 	 the admissible set $\mathcal{A}^{\rm lin}_\infty$ is non-empty and that the linear operator
	$
	{\Pi}^{\rm lin}_\infty$ is bounded.	
	Then, for sufficiently small values of the thickness $h$  {such that condition \eqref{rcondh5} is satisfied}
	and for constitutive coefficients  such that $\mu>0$, $2\,\lambda+\mu> 0$, $b_1>0$, $b_2>0$ and $b_3>0$, the  problem \eqref{wfproblemcon} admits a unique solution
	$v\in  \mathcal{A}^{\rm lin}_\infty$.
\end{theorem}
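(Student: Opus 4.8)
The plan is to apply the Lax--Milgram theorem to the weak formulation \eqref{wfproblemcon} on the Hilbert space $\mathcal{A}^{\rm lin}_\infty$, equipped with the norm introduced above. Since $\mathcal{A}^{\rm lin}_\infty$ is, by hypothesis, non-empty and (being a closed subset of $\mathcal{A}_1$ cut out by linear symmetry constraints and a weak $\mathrm{H}^1$-regularity requirement on $\vartheta_\infty$) a closed linear subspace of a Hilbert space, it is itself a Hilbert space. The right-hand side ${\Pi}^{\rm lin}_\infty$ is assumed bounded and linear, so the only two things to check are the boundedness (continuity) and the coercivity of the bilinear form $\mathcal{B}^{\rm{lin}}_\infty$.

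Boundedness is the routine part: each term in $\mathcal{B}^{\rm{lin}}_\infty(v,\widetilde v)$ is an integral over $\omega$ of a product of two of the linear expressions $\mathcal{E}^{\rm{lin}}_\infty(\cdot)$, $\mathcal{K}^{\rm{lin}}_\infty(\cdot)$ and their compositions with the bounded (by Lemma \ref{lemmaLy}'s argument, ${\rm L}_{y_0}$, ${\rm B}_{y_0}$, ${\rm C}_{y_0}$, $[\nabla\Theta]^{-1}$ are all in $\mathrm{C}(\overline\omega)$) geometric tensor fields, weighted by the $h$-dependent coefficients. Using $|\mathcal{W}_{\mathrm{shell}}^\infty(S,T)|\le c\,\|S\|\,\|T\|$ etc. and Cauchy--Schwarz, one bounds $|\mathcal{B}^{\rm{lin}}_\infty(v,\widetilde v)|$ by $C\,\|v\|\,\|\widetilde v\|$ in the norm of $\mathcal{A}^{\rm lin}_\infty$, since $\mathcal{G}^{\rm{lin}}_{\rm Koiter}(v)$, $\mathcal{R}^{\rm{lin}}_{\rm Koiter}(v)$ and $\nabla\vartheta_\infty(v)$ are each controlled by that norm.

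Coercivity is the heart of the argument, and it proceeds in three steps. First, apply Lemma \ref{propcoerh5}: for $h$ satisfying \eqref{rcondh5} and the stated sign conditions on the constitutive coefficients, the pointwise energy density dominates $a_1^+\big(\|\mathcal{E}^{\rm{lin}}_\infty(v)\|^2+\|\mathcal{E}^{\rm{lin}}_\infty(v){\rm B}_{y_0}+{\rm C}_{y_0}\mathcal{K}^{\rm{lin}}_\infty(v)\|^2+\|\mathcal{K}^{\rm{lin}}_\infty(v)\|^2\big)$; integrating and using the uniform positive lower bound $\det(\nabla y_0|n_0)\ge c_0>0$ gives $\mathcal{B}^{\rm{lin}}_\infty(v,v)\ge c\int_\omega\big(\|\mathcal{E}^{\rm{lin}}_\infty(v)\|^2+\|\mathcal{K}^{\rm{lin}}_\infty(v)\|^2\big)\,{\rm da}$. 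Second, rewrite these two $\mathbb{R}^{3\times3}$-quantities in terms of the Koiter measures: $\|\mathcal{E}^{\rm{lin}}_\infty(v)\|$ is pointwise equivalent to $\|\mathcal{G}^{\rm{lin}}_{\rm Koiter}(v)\|$ (conjugation by the invertible, uniformly bounded $[\nabla\Theta]^{-1}$), while $\mathcal{K}^{\rm{lin}}_\infty(v)=(\nabla\vartheta_\infty|0)[\nabla\Theta]^{-1}$ controls $\|\nabla\vartheta_\infty\|$ from below; moreover $\mathcal{C}_{y_0}\mathcal{K}^{\rm{lin}}_\infty(v)$ is, up to conjugation, $-[\mathcal{R}^{\rm{lin}}_{\rm Koiter}(v)-\mathcal{G}^{\rm{lin}}_{\rm Koiter}(v){\rm L}_{y_0}]^\flat$, so combining the first two terms of the Lemma \ref{propcoerh5} estimate recovers control of $\|\mathcal{R}^{\rm{lin}}_{\rm Koiter}(v)\|$ (here one uses Lemma \ref{lemmaLy} to pass between $\|\mathcal{R}^{\rm{lin}}_{\rm Koiter}(v)-2\mathcal{G}^{\rm{lin}}_{\rm Koiter}(v){\rm L}_{y_0}\|$, $\|\mathcal{R}^{\rm{lin}}_{\rm Koiter}(v)-\mathcal{G}^{\rm{lin}}_{\rm Koiter}(v){\rm L}_{y_0}\|$ and $\|\mathcal{R}^{\rm{lin}}_{\rm Koiter}(v)\|$). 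Thus $\mathcal{B}^{\rm{lin}}_\infty(v,v)\ge c\big(\|\mathcal{G}^{\rm{lin}}_{\rm Koiter}(v)\|^2_{{\rm L}^2}+\|\mathcal{R}^{\rm{lin}}_{\rm Koiter}(v)\|^2_{{\rm L}^2}+\|\nabla\vartheta_\infty(v)\|^2_{{\rm L}^2}\big)$. Third, invoke the Korn-type inequality on a general surface, Theorem \ref{Kornlr} (valid on $\mathcal{A}_1\supset\mathcal{A}^{\rm lin}_\infty$, since $y_0\in\mathrm{C}^3(\overline\omega)$), to absorb the first two terms into $\|v_1\|^2_{{\rm H}^1}+\|v_2\|^2_{{\rm H}^1}+\|v_3\|^2_{{\rm H}^2}$; together with the $\|\nabla\vartheta_\infty\|^2_{{\rm L}^2}$ term this is exactly the square of the norm on $\mathcal{A}^{\rm lin}_\infty$, yielding $\mathcal{B}^{\rm{lin}}_\infty(v,v)\ge c\,\|v\|^2$. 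With continuity and coercivity in hand, Lax--Milgram delivers the unique $v\in\mathcal{A}^{\rm lin}_\infty$ solving \eqref{wfproblemcon}.

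The main obstacle is the second step: carefully organizing the pointwise algebra so that the \emph{mixed} term $\|\mathcal{E}^{\rm{lin}}_\infty(v){\rm B}_{y_0}+{\rm C}_{y_0}\mathcal{K}^{\rm{lin}}_\infty(v)\|^2$ supplied by Lemma \ref{propcoerh5}, rather than a bare $\|\mathcal{K}^{\rm{lin}}_\infty(v)\|^2$ term, is what produces control of $\|\mathcal{R}^{\rm{lin}}_{\rm Koiter}(v)\|$ — one cannot get $\mathcal{R}^{\rm{lin}}_{\rm Koiter}(v)$ from $\nabla\vartheta_\infty$ alone without the bending contribution. Everything else (the conjugation-invariance estimates, the uniform bounds on $[\nabla\Theta]^{-1}$, ${\rm L}_{y_0}$, the passage from the pointwise Frobenius inequalities of Lemma \ref{lemmaLy} to the integrated statement) is routine once that bookkeeping is set up, and it mirrors the proof already carried out in the unconstrained case.
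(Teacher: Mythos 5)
Your proposal is correct and follows essentially the same route as the paper's proof: coercivity of the density via Lemma \ref{propcoerh5}, conversion to the Koiter strain measures through the conjugation relations (with uniform lower bounds on $\lambda_{\rm min}(\widehat{\rm I}^{-1}_{y_0})$), Lemma \ref{lemmaLy} to recover $\lVert \mathcal{R}_{\rm Koiter}^{\rm lin}(v)\rVert$ from $\lVert \mathcal{R}_{\rm Koiter}^{\rm lin}(v)-2\,\mathcal{G}_{\rm Koiter}^{\rm lin}(v)\,{\rm L}_{y_0}\rVert$, the Korn inequality of Theorem \ref{Kornlr} on $\mathcal{A}_1\supset\mathcal{A}^{\rm lin}_\infty$, and finally Lax--Milgram. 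The only difference is cosmetic: you spell out the continuity of the bilinear form and the closedness of $\mathcal{A}^{\rm lin}_\infty$, which the paper leaves to the analogy with the unconstrained case.
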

\begin{proof} 
	First, we show that for sufficiently small values of the thickness $h$ such that   \eqref{rcondh5} is satisfied 
	and for constitutive coefficients  satisfying  $\mu>0$, $2\,\lambda+\mu> 0$, $b_1>0$, $b_2>0$ and $b_3>0$,   the  bilinear form $\mathcal{B}^{\rm{lin}}_\infty$
	is coercive. According to Proposition 3.3. from \cite{GhibaNeffPartIII}, see also Lemma \ref{propcoerh5}, and considering that energy terms containing $\mu_c$ are absent everywhere,   there exists a constant   $a_1^+>0$  such that
	\begin{equation}\label{26bis}
	W^{\infty}(\mathcal{E}_{ \infty }^{\rm lin}(v), \mathcal{K}_{ \infty }^{\rm lin}(v))\,\geq\, a_1^+\, \big( \lVert \mathcal{E}_{ \infty }^{\rm lin}(v)\rVert ^2 + \lVert
	\mathcal{E}_{ \infty }^{\rm lin}(v){\rm B}_{y_0}+{\rm C}_{y_0}\, \mathcal{K}_{ \infty }^{\rm lin} (v) \rVert ^2+\lVert \mathcal{K}_{ \infty }^{\rm lin}(v)\rVert ^2\,\big)  \quad \forall\, v\in \mathcal{A}_{ \infty }^{\rm lin},
	\end{equation}
	where
	$a_1^+$ depends on the constitutive coefficients (but not on the Cosserat couple modulus $\mu_{\rm c}$). 
Therefore, it follows that   there exists a constant   $a_1^+>0$  such that
\begin{align}\label{u1}	\mathcal{B}_{ \infty }^{\rm lin}(v,{v}) =\int_\omega 	W^{\infty}(\mathcal{E}_{ \infty }^{\rm lin}(v), \mathcal{K}_{ \infty }^{\rm lin}(v)){\rm det}(\nabla y_0|n_0)    {\rm da}\geq\, a_1^+\, \big( \lVert \mathcal{E}_{ \infty }^{\rm lin}(v)\rVert ^2_{{\rm L}^2(\omega)} + \lVert \mathcal{K}_{ \infty }^{\rm lin}(v)\rVert ^2_{{\rm L}^2(\omega)}\,\big) \quad \forall\, v\in \mathcal{A}_{ \infty }^{\rm lin}.
\end{align}
Since 
\begin{align}\label{u2}	
\mathcal{E}_{ \infty }^{\rm lin}(v)=[\nabla\Theta \,]^{-T}
[\mathcal{G}_{\rm{Koiter}}^{\rm{lin}}(v)]^\flat [\nabla\Theta \,]^{-1},
\end{align}
we obtain
\begin{align}\label{u3}	
\lVert \mathcal{E}_{ \infty }^{\rm lin}(v)\rVert^2&= \bigl\langle [\nabla\Theta \,]^{-T}
[\mathcal{G}_{\rm{Koiter}}^{\rm{lin}}(v)]^\flat [\nabla\Theta \,]^{-1},[\nabla\Theta \,]^{-T}
[\mathcal{G}_{\rm{Koiter}}^{\rm{lin}}(v)]^\flat [\nabla\Theta \,]^{-1}\bigr\rangle\notag\\
&\geq  \bigl\langle [\nabla\Theta \,]^{-1}[\nabla\Theta \,]^{-T}
[\mathcal{G}_{\rm{Koiter}}^{\rm{lin}}(v)]^\flat [\nabla\Theta \,]^{-1},
[\mathcal{G}_{\rm{Koiter}}^{\rm{lin}}(v)]^\flat [\nabla\Theta \,]^{-1}\bigr\rangle
\notag\\
&\geq \lambda_{\rm min}(\widehat{\rm I}^{-1}_{y_0}) \bigl\langle 
[\mathcal{G}_{\rm{Koiter}}^{\rm{lin}}(v)]^\flat ,
[\mathcal{G}_{\rm{Koiter}}^{\rm{lin}}(v)]^\flat [\nabla\Theta \,]^{-1}[\nabla\Theta \,]^{-T}\bigr\rangle
\\
&\geq \lambda_{\rm min}(\widehat{\rm I}^{-1}_{y_0})^2 \lVert
\mathcal{G}_{\rm{Koiter}}^{\rm{lin}}(v) \rVert^2,\notag
\end{align}
 where $0<\lambda_{\rm min}(\widehat{\rm I}^{-1}_{y_0})=\min\{\lambda_{\rm min}({\rm I}^{-1}_{y_0}),1\}\leq 1$ is the smallest eigenvalue of the positive definite matrix $\widehat{\rm I}^{-1}_{y_0}$. 

Similarly, from
\begin{align}\label{u4}	
\mathcal{E}_{ \infty }^{\rm lin}(v){\rm B}_{y_0}+{\rm C}_{y_0}\, \mathcal{K}_{ \infty }^{\rm lin} (v) =[\nabla\Theta \,]^{-T} [\mathcal{R}_{\rm{Koiter}}^{\rm{lin}}({v})-2\,\mathcal{G}_{\rm{Koiter}}^{\rm{lin}}({v}) \,{\rm L}_{y_0} ]^\flat
[\nabla\Theta \,]^{-1},
\end{align}
we obtain
\begin{align}\label{u5}	
\lVert \mathcal{E}_{ \infty }^{\rm lin}(v){\rm B}_{y_0}+{\rm C}_{y_0}\, \mathcal{K}_{ \infty }^{\rm lin} (v) \rVert^2\geq \lambda_{\rm min}^2(\widehat{\rm I}^{-1}_{y_0})\lVert \mathcal{R}_{\rm{Koiter}}^{\rm{lin}}({v})-2\,\mathcal{G}_{\rm{Koiter}}^{\rm{lin}}({v}) \,{\rm L}_{y_0} \rVert^2,
\end{align}
while 
\begin{align}\label{u6}	
\lVert \mathcal{K}_{ \infty }^{\rm lin}(v)\rVert ^2&\geq \lambda_{\rm min}(\widehat{\rm I}^{-1}_{y_0})(\lVert\mbox{axl}(\partial_{x_1}A_\infty(v))\rVert^2+\lVert\mbox{axl}(\partial_{x_2}A_\infty(v))\rVert^2)= \frac{\lambda_{\rm min}(\widehat{\rm I}^{-1}_{y_0})}{2}(\lVert\partial_{x_1}A_\infty(v)\rVert^2+\lVert\partial_{x_2}A_\infty(v)\rVert^2)\notag\\&= \frac{\lambda_{\rm min}(\widehat{\rm I}^{-1}_{y_0})}{2}\lVert\nabla\,A_\infty(v)\rVert^2.
\end{align}

Let us prove that under the hypothesis of the theorem $\lambda_{\rm min}(\widehat{\rm I}^{-1}_{y_0})$ is bounded below  in $\overline{\omega}$ by a positive constant. We notice again that since
$y_0\in {\rm C}^{3}(\omega, \mathbb{R}^3)$ is such that the two vectors $a_\alpha=\partial_{x_\alpha}y_0$, $\alpha=1,2$ are linear independent at all points of $\overline{\omega}$ it follows that $
\det[\nabla_x\Theta(0)] \geq\, c_0 >0
$ at all points of $\overline{\omega}$.
Moreover, we have \begin{align}
2\,\lambda_{\rm max}^2({\rm I}^{-1}_{y_0})\geq \lambda_{\rm min}^2({\rm I}^{-1}_{y_0})+\lambda_{\rm max}^2({\rm I}^{-1}_{y_0})=\lVert {\rm I}^{-1}_{y_0}\rVert^2,
\end{align}
where $\lambda_{\rm max}^2({\rm I}^{-1}_{y_0})$ is the largest eigenvalue of the positive definite matrix $\widehat{\rm I}^{-1}_{y_0}$, and
\begin{align}
\lambda_{\rm min}({\rm I}^{-1}_{y_0})=\frac{\det {\rm I}^{-1}_{y_0}}{\lambda_{\rm max}({\rm I}^{-1}_{y_0})}=\frac{1}{\lambda_{\rm max}({\rm I}^{-1}_{y_0})\det {\rm I}_{y_0}}\geq \frac{1}{\sqrt{2}\,\lVert{\rm I}^{-1}_{y_0}\rVert \det {\rm I}_{y_0}}=\frac{1}{\sqrt{2}\,\lVert {\rm Cof}\,{\rm I}_{y_0}\rVert}.
\end{align}
Since $y_0\in C^2(\overline{\omega};\mathbb{R}^3)$ implies ${\rm Cof}\,{\rm I}_{y_0}\in C^1(\overline{\omega};\mathbb{R}^{2\times 2})$, we have that ${\rm Cof}\,{\rm I}_{y_0}$ is bounded above. Hence, there exists a positive constant $c>0$ such that 
\begin{align}
\lambda_{\rm min}({\rm I}^{-1}_{y_0})=\frac{\det {\rm I}^{-1}_{y_0}}{\lambda_{\rm max}({\rm I}^{-1}_{y_0})}\geq\frac{1}{\sqrt{2}\,\lVert {\rm Cof}\,{\rm I}_{y_0}\rVert}\geq c>0,
\end{align}
i.e., that $\lambda_{\rm min}(\widehat{\rm I}^{-1}_{y_0})>0$ is bounded below over $\overline{\omega}$ by a positive constant.

Thus, from  \eqref{u3}, \eqref{u5} and \eqref{u6}, we deduce that there exists a positive constant $c>0$ such that
\begin{align}\label{u7}	\mathcal{B}_{ \infty }^{\rm lin}(v,{v})\geq c\,\big(\lVert
\mathcal{G}_{\rm{Koiter}}^{\rm{lin}}(v) \rVert^2_{{\rm L}^2(\omega)}+\lVert \mathcal{R}_{\rm{Koiter}}^{\rm{lin}}({v})-2\,\mathcal{G}_{\rm{Koiter}}^{\rm{lin}}({v}) \,{\rm L}_{y_0} \rVert^2_{{\rm L}^2(\omega)}+\lVert\nabla A_\infty(v)\rVert^2_{{\rm L}^2(\omega)}\big)
\quad \forall\, v\in \mathcal{A}_{ \infty }^{\rm lin}.\end{align}

 The estimate \eqref{u10} together with \eqref{u1} and \eqref{u6} lead us to the existence of a positive constant $c>0$ such that
 \begin{align}\label{u11}	\mathcal{B}_{ \infty }^{\rm lin}(v,{v})\geq\, c\, \big( \lVert\mathcal{G}_{\rm{Koiter}}^{\rm{lin}}(v) \rVert^2_{{\rm L}^2(\omega)}+\lVert \mathcal{R}_{\rm{Koiter}}^{\rm{lin}}({v})\rVert^2_{{\rm L}^2(\omega)}+\lVert\nabla A_\infty(v)\rVert^2_{{\rm L}^2(\omega)}\,\big) \quad \forall\, v\in \mathcal{A}_{ \infty }^{\rm lin}.
 \end{align}

 Using the Korn inequality given by Theorem \ref{Kornlr}, we have that  there exists a constant $c>0$ such that
 \begin{align}
 \lVert \mathcal{G}_{\rm{Koiter}}^{\rm{lin}} \rVert_{{\rm L}^2(\omega)}^2+\lVert \mathcal{R}_{\rm{Koiter}}^{\rm{lin}} \rVert_{{\rm L}^2(\omega)}^2\geq  c\,(\lVert v_1\rVert_{{\rm H}^1(\omega;\mathbb{R})}^2+\lVert v_2\rVert_{{\rm H}^1(\omega;\mathbb{R})}^2+\lVert v_3\rVert_{{\rm H}^2(\omega;\mathbb{R})}^2)\qquad \forall \ v\in \mathcal{A}_{ \infty }^{\rm lin}\subset \mathcal{A}_1.
 \end{align}
As a conclusion, we have that there exists a constant $c>0$ such that
 \begin{align}\label{u12}	\mathcal{B}_{ \infty }^{\rm lin}(v,{v})\geq\, c\, \big(\lVert v_1\rVert_{{\rm H}^1(\omega;\mathbb{R})}^2+\lVert v_2\rVert_{{\rm H}^1(\omega;\mathbb{R})}^2+\lVert v_3\rVert_{{\rm H}^2(\omega;\mathbb{R})}^2+\lVert\nabla A_\infty(v)\rVert^2_{{\rm L}^2(\omega)}\,\big) \quad \forall\, v\in \mathcal{A}_{ \infty }^{\rm lin}.
 \end{align}
 In consequence, the bilinear form is coercive on $\mathcal{A}_{ \infty }^{\rm lin}$ and the Lax-Milgram theorem leads us to the conclusion of the theorem.
\end{proof}
\begin{theorem}\label{th1cond2}{\rm [A conditional existence result for the theory including terms up to order $O(h^5)$  for shells whose middle surface has little regularity]}
	Let there be given a domain $\omega\subset \mathbb{R}^2$ and an injective mapping $y_0\in {\rm H}^{2,\infty}(\omega, \mathbb{R}^3)$ such that the two vectors $a_\alpha=\partial_{x_\alpha}y_0$, $\alpha=1,2$ are linear independent at all points of $\overline{\omega}$. 	Assume that  the admissible set $\widehat{\mathcal{A}}^{\rm lin}_\infty$ is non-empty and that the linear operator
	$
	{\Pi}^{\rm lin}_\infty$ is bounded.	
	Then, for sufficiently small values of the thickness $h$  {such that condition \eqref{rcondh5} is satisfied}
	and for constitutive coefficients  such that $\mu>0$, $2\,\lambda+\mu> 0$, $b_1>0$, $b_2>0$ and $b_3>0$, the  problem \eqref{wfproblemcon} admits a unique solution
	$v\in  \widehat{\mathcal{A}}^{\rm lin}_\infty$.
\end{theorem}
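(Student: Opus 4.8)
The scheme of proof mirrors that of Theorem~\ref{th1cond}; the only changes are that the admissible set $\mathcal{A}^{\rm lin}_\infty$ is replaced by $\widehat{\mathcal{A}}^{\rm lin}_\infty$, the change of metric and change of curvature are taken in the forms \eqref{equ11} and \eqref{equ12}, and the Korn inequality on a general surface is replaced by its low-regularity counterpart, Theorem~\ref{Kornlr2}. The plan is to show that, for $h$ satisfying \eqref{rcondh5} and for $\mu>0$, $2\,\lambda+\mu>0$, $b_1,b_2,b_3>0$, the bilinear form $\mathcal{B}^{\rm lin}_\infty$ is bounded and coercive on $\widehat{\mathcal{A}}^{\rm lin}_\infty$ equipped with the norm $v\mapsto\bigl(\lVert v\rVert_{{\rm H}^1(\omega;\mathbb{R}^3)}^2+\sum_{\alpha,\beta=1,2}|\langle\partial_{x_\alpha x_\beta}v,n_0\rangle_{{\rm L}^2(\omega)}|^2+\lVert\nabla A_\infty(v)\rVert^2_{{\rm L}^2(\omega)}\bigr)^{1/2}$, and then to invoke the Lax--Milgram theorem together with the assumed boundedness of $\Pi^{\rm lin}_\infty$ to conclude unique solvability of \eqref{wfproblemcon}.

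For coercivity I would first apply Lemma~\ref{propcoerh5} (Proposition~3.3 of \cite{GhibaNeffPartIII}, with all $\mu_{\rm c}$-terms absent), which under the stated thickness and constitutive assumptions gives a constant $a_1^+>0$ with $\mathcal{B}^{\rm lin}_\infty(v,v)\geq a_1^+\bigl(\lVert\mathcal{E}^{\rm lin}_\infty(v)\rVert^2_{{\rm L}^2(\omega)}+\lVert\mathcal{E}^{\rm lin}_\infty(v){\rm B}_{y_0}+{\rm C}_{y_0}\mathcal{K}^{\rm lin}_\infty(v)\rVert^2_{{\rm L}^2(\omega)}+\lVert\mathcal{K}^{\rm lin}_\infty(v)\rVert^2_{{\rm L}^2(\omega)}\bigr)$ for every $v\in\widehat{\mathcal{A}}^{\rm lin}_\infty$. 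Using the pull-back identities \eqref{eq12l2} together with the eigenvalue estimates of the type \eqref{u3}, \eqref{u5}, \eqref{u6} this is turned into $\mathcal{B}^{\rm lin}_\infty(v,v)\geq c\bigl(\lVert\mathcal{G}^{\rm lin}_{\rm Koiter}(v)\rVert^2_{{\rm L}^2(\omega)}+\lVert\mathcal{R}^{\rm lin}_{\rm Koiter}(v)-2\,\mathcal{G}^{\rm lin}_{\rm Koiter}(v)\,{\rm L}_{y_0}\rVert^2_{{\rm L}^2(\omega)}+\lVert\nabla A_\infty(v)\rVert^2_{{\rm L}^2(\omega)}\bigr)$. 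The point is that this whole chain uses only that $y_0\in{\rm H}^{2,\infty}(\omega;\mathbb{R}^3)$ with $a_1,a_2$ linearly independent at every point of $\overline{\omega}$: continuity of the tangent vectors on the compact set $\overline{\omega}$ forces $\det[\nabla\Theta(0)]\geq c_0>0$, while $y_0\in{\rm H}^{2,\infty}$ makes ${\rm Cof}\,{\rm I}_{y_0}$ and ${\rm II}_{y_0}$ essentially bounded; hence $\lambda_{\rm min}(\widehat{\rm I}^{-1}_{y_0})$ is bounded below by a positive constant and $\lVert{\rm L}_{y_0}\rVert$ is essentially bounded, so that the proof of Lemma~\ref{lemmaLy} goes through verbatim under the present hypotheses.

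Granting Lemma~\ref{lemmaLy}, the mixed term is absorbed to yield $\mathcal{B}^{\rm lin}_\infty(v,v)\geq c\bigl(\lVert\mathcal{G}^{\rm lin}_{\rm Koiter}(v)\rVert^2_{{\rm L}^2(\omega)}+\lVert\mathcal{R}^{\rm lin}_{\rm Koiter}(v)\rVert^2_{{\rm L}^2(\omega)}+\lVert\nabla A_\infty(v)\rVert^2_{{\rm L}^2(\omega)}\bigr)$, and now, since $y_0\in{\rm H}^{2,\infty}(\omega;\mathbb{R}^3)$ and the strain measures are taken in the forms \eqref{equ11}--\eqref{equ12}, the low-regularity Korn inequality, Theorem~\ref{Kornlr2}, bounds $\lVert\mathcal{G}^{\rm lin}_{\rm Koiter}(v)\rVert^2_{{\rm L}^2(\omega)}+\lVert\mathcal{R}^{\rm lin}_{\rm Koiter}(v)\rVert^2_{{\rm L}^2(\omega)}$ from below by a positive multiple of $\lVert v\rVert_{{\rm H}^1(\omega;\mathbb{R}^3)}^2+\sum_{\alpha,\beta=1,2}|\langle\partial_{x_\alpha x_\beta}v,n_0\rangle_{{\rm L}^2(\omega)}|^2$ for all $v\in\widehat{\mathcal{A}}^{\rm lin}_\infty\subset\mathcal{A}_2$. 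Re-adding the $\lVert\nabla A_\infty(v)\rVert^2_{{\rm L}^2(\omega)}$ contribution gives coercivity of $\mathcal{B}^{\rm lin}_\infty$ in the norm of $\widehat{\mathcal{A}}^{\rm lin}_\infty$; boundedness of $\mathcal{B}^{\rm lin}_\infty$ is immediate from \eqref{pozitivdef}, the essential boundedness of $[\nabla\Theta]^{\pm1}$, ${\rm L}_{y_0}$ and ${\rm B}_{y_0}$, and the form of the norm, so Lax--Milgram produces a unique $v\in\widehat{\mathcal{A}}^{\rm lin}_\infty$ solving \eqref{wfproblemcon}. The step that needs the most care, and the only one genuinely different from the proof of Theorem~\ref{th1cond}, is checking that Lemma~\ref{lemmaLy} and the lower bound on $\lambda_{\rm min}(\widehat{\rm I}^{-1}_{y_0})$ survive the weakening of the regularity of $y_0$ from ${\rm C}^3(\overline{\omega})$ to ${\rm H}^{2,\infty}(\omega)$; this is exactly why the uniform linear independence of $a_1,a_2$ on the closed set $\overline{\omega}$ is retained as a hypothesis.
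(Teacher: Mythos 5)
Your proposal is correct and follows essentially the same route as the paper: reuse the coercivity chain from Theorem \ref{th1cond}, observe that the lower bound on $\lambda_{\rm min}(\widehat{\rm I}^{-1}_{y_0})$ and the estimate of Lemma \ref{lemmaLy} survive the weakening to $y_0\in {\rm H}^{2,\infty}$ because $\det[\nabla\Theta]\geq c_0>0$ and ${\rm Cof}\,{\rm I}_{y_0}$, ${\rm II}_{y_0}$ (hence ${\rm L}_{y_0}$) are essentially bounded, and replace the Korn inequality on a general surface by Theorem \ref{Kornlr2} before applying Lax--Milgram. The only cosmetic difference is that the paper states the resulting constants via ${\rm L}^\infty$-norms and integrates the pointwise bounds, rather than saying the earlier lemma holds ``verbatim'', but the substance is identical.
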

\begin{proof} The proof is similar with that of the first existence result Theorem \ref{th1cond}. There are only three important differences. Before proceeding to discuss these differences, let us notice that since
	$y_0\in {\rm H}^{2,\infty}(\omega, \mathbb{R}^3)$ is such that the two vectors $a_\alpha=\partial_{x_\alpha}y_0$, $\alpha=1,2$ are linear independent at all points of $\overline{\omega}$ it follows that $
	\det[\nabla_x\Theta(0)] \geq\, c_0 >0
	$ at all points of $\overline{\omega}$.
	
	The first difference is regarding the proof of the estimates
	\begin{align}\label{u5n}	
	\lVert \mathcal{E}_{ \infty }^{\rm lin}(v){\rm B}_{y_0}+{\rm C}_{y_0}\, \mathcal{K}_{ \infty }^{\rm lin} (v) \rVert^2_{{\rm L}^2(\omega)}&\geq c\,\lVert \mathcal{R}_{\rm{Koiter}}^{\rm{lin}}({v})-2\,\mathcal{G}_{\rm{Koiter}}^{\rm{lin}}({v}) \,{\rm L}_{y_0} \rVert^2_{{\rm L}^2(\omega)},\\	
	\lVert \mathcal{K}_{ \infty }^{\rm lin}(v)\rVert ^2_{{\rm L}^2(\omega)}&\geq c\, \lVert\nabla A_\infty(v)\rVert^2_{{\rm L}^2(\omega)},\notag
	\end{align}
	with $c>0$ a positive constant over $\overline{\omega}$, since in the proof of Lemma \ref{lemmaLy} which was used in the proof of Theorem \ref{th1cond} we have used that
	$y_0\in {\rm C}^{3}(\overline{\omega}, \mathbb{R}^3)$, while in the hypothesis of the theorem under discussion we have $y_0\in {\rm H}^{2,\infty}(\omega, \mathbb{R}^3)$.

	We still know from \eqref{u5} and \eqref{u6} that
	\begin{align}\label{u5nn}	
	\lVert \mathcal{E}_{ \infty }^{\rm lin}(v){\rm B}_{y_0}+{\rm C}_{y_0}\, \mathcal{K}_{ \infty }^{\rm lin} (v) \rVert^2\geq \lambda_{\rm min}(\widehat{\rm I}^{-1}_{y_0})\lVert \mathcal{R}_{\rm{Koiter}}^{\rm{lin}}({v})-2\,\mathcal{G}_{\rm{Koiter}}^{\rm{lin}}({v}) \,{\rm L}_{y_0} \rVert^2,
	\end{align}
and
	\begin{align}\label{u6nn}	
	\lVert \mathcal{K}_{ \infty }^{\rm lin}(v)\rVert ^2&\geq  \frac{\lambda_{\rm min}(\widehat{\rm I}^{-1}_{y_0})}{2}\lVert\nabla A_\infty(v)\rVert^2,
	\end{align}
	since they are independent by the assumption
	$y_0\in {\rm C}^{3}(\overline{\omega}, \mathbb{R}^3)$ and $\lambda_{\rm min}^2(\widehat{\rm I}^{-1}_{y_0})\leq 1$. In addition, for 
	$
	\det[\nabla_x\Theta(0)] \geq\, c_0 >0
	$ it is true that
$
	\lambda_{\rm min}({\rm I}^{-1}_{y_0})\geq \frac{1}{\sqrt{2}\,\lVert {\rm Cof}\,{\rm I}_{y_0}\rVert}.
$
	Therefore, $y_0\in {\rm H}^{2,\infty}(\omega, \mathbb{R}^3)$ implies ${\rm Cof}\,{\rm I}_{y_0} \in {\rm L}^\infty(\omega)$ and 
	\begin{align}	
\int_{\omega}\lVert \mathcal{E}_{ \infty }^{\rm lin}(v){\rm B}_{y_0}+{\rm C}_{y_0}\, \mathcal{K}_{ \infty }^{\rm lin} (v) \rVert^2\, {\rm da}&\geq \int_{\omega} \frac{1}{\sqrt{2}\,\lVert {\rm Cof}\,{\rm I}_{y_0}\rVert}\lVert \mathcal{R}_{\rm{Koiter}}^{\rm{lin}}({v})-2\,\mathcal{G}_{\rm{Koiter}}^{\rm{lin}}({v}) \,{\rm L}_{y_0} \rVert^2\, {\rm da}\\
&\geq \frac{1}{\sqrt{2}\,\lVert {\rm Cof}\,{\rm I}_{y_0}\rVert_{L^\infty(\omega)}} \int_{\omega} \lVert \mathcal{R}_{\rm{Koiter}}^{\rm{lin}}({v})-2\,\mathcal{G}_{\rm{Koiter}}^{\rm{lin}}({v}) \,{\rm L}_{y_0} \rVert^2\, {\rm da}\notag
\end{align}
i.e., there exists a constant $c>0$ such that
\begin{align}	
\lVert \mathcal{E}_{ \infty }^{\rm lin}(v){\rm B}_{y_0}+{\rm C}_{y_0}\, \mathcal{K}_{ \infty }^{\rm lin} (v) \rVert^2_{{\rm L}^2(\omega)}
\geq c\,  \lVert \mathcal{R}_{\rm{Koiter}}^{\rm{lin}}({v})-2\,\mathcal{G}_{\rm{Koiter}}^{\rm{lin}}({v}) \,{\rm L}_{y_0} \rVert^2_{{\rm L}^2(\omega)}.
\end{align}
In a similar way it follows that there exists a constant $c>0$ such that
	\begin{align}
	\lVert \mathcal{K}_{ \infty }^{\rm lin}(v)\rVert ^2_{{\rm L}^2(\omega)} &\geq  c\,\lVert\nabla A_\infty(v)\rVert^2_{{\rm L}^2(\omega)}.
	\end{align}
	
	The second difference is regarding the needed estimate
	\begin{align}
	\lVert
	\mathcal{G}_{\rm{Koiter}}^{\rm{lin}}(v) \rVert^2_{{\rm L}^2(\omega)}+\lVert \mathcal{R}_{\rm{Koiter}}^{\rm{lin}}({v})-2\,\mathcal{G}_{\rm{Koiter}}^{\rm{lin}}({v}) \,{\rm L}_{y_0} \rVert^2_{{\rm L}^2(\omega)}\geq c\, \big(\big(\lVert
	\mathcal{G}_{\rm{Koiter}}^{\rm{lin}}(v) \rVert^2_{{\rm L}^2(\omega)}+\lVert \mathcal{R}_{\rm{Koiter}}^{\rm{lin}}({v}) \rVert^2_{{\rm L}^2(\omega)}\big),
	\end{align}
	where $c$ is a positive constant. 
	However, the estimates \eqref{u8} and \eqref{u9} are true for  $y_0\in {\rm H}^{2,\infty}(\omega, \mathbb{R}^3)$, too.
Moreover, since \begin{align}
	\lVert {\rm L}_{y_0} \rVert\leq  \frac{1}{\det  {\rm I}_{y_0}}\lVert {\rm Cof}\,{\rm I}_{y_0}\rVert\, \lVert {\rm II}_{y_0} \rVert
	\end{align}
	and $
	\det[\nabla_x\Theta(0)] \geq\, c_0 >0
	$ at all points of $\overline{\omega}$, from \eqref{u8} and \eqref{u9}
	we deduce the estimate
		\begin{align}\label{u9n}	\lVert
	\mathcal{G}_{\rm{Koiter}}^{\rm{lin}}(v) \rVert^2&+\lVert \mathcal{R}_{\rm{Koiter}}^{\rm{lin}}({v})-2\,\mathcal{G}_{\rm{Koiter}}^{\rm{lin}}({v}) \,{\rm L}_{y_0} \rVert^2\notag\\
	&\geq \begin{cases}
	\left(\lVert
	\mathcal{G}_{\rm{Koiter}}^{\rm{lin}}(v) \rVert^2+\lVert \mathcal{R}_{\rm{Koiter}}^{\rm{lin}}({v}) \rVert^2\right)&\text{if}\ \ {\rm L}_{y_0}=0 ,\\\dd\frac{1}{2}\min\{\frac{\det  {\rm I}_{y_0}}{4\, \lVert {\rm Cof}\,{\rm I}_{y_0}\rVert^2\, \lVert {\rm II}_{y_0} \rVert^2},1\}\left(\lVert
	\mathcal{G}_{\rm{Koiter}}^{\rm{lin}}(v) \rVert^2+\lVert \mathcal{R}_{\rm{Koiter}}^{\rm{lin}}({v}) \rVert^2\right)&\text{if}\ \ {\rm L}_{y_0}\neq 0.
	\end{cases}\notag\\
	&\geq \begin{cases}
	\left(\lVert
	\mathcal{G}_{\rm{Koiter}}^{\rm{lin}}(v) \rVert^2+\lVert \mathcal{R}_{\rm{Koiter}}^{\rm{lin}}({v}) \rVert^2\right)&\text{if}\ \ {\rm L}_{y_0}=0 ,\\\dd\frac{1}{2}\min\{\frac{c}{4\, \lVert {\rm Cof}\,{\rm I}_{y_0}\rVert^2\, \lVert {\rm II}_{y_0} \rVert^2},1\}\left(\lVert
	\mathcal{G}_{\rm{Koiter}}^{\rm{lin}}(v) \rVert^2+\lVert \mathcal{R}_{\rm{Koiter}}^{\rm{lin}}({v}) \rVert^2\right)&\text{if}\ \ {\rm L}_{y_0}\neq 0,
	\end{cases}
	\notag\\
	&\geq \dd\min\{\frac{c}{8\, \lVert {\rm Cof}\,{\rm I}_{y_0}\rVert^2\, \lVert {\rm II}_{y_0} \rVert^2},\frac{1}{2}\}\left(\lVert
	\mathcal{G}_{\rm{Koiter}}^{\rm{lin}}(v) \rVert^2+\lVert \mathcal{R}_{\rm{Koiter}}^{\rm{lin}}({v}) \rVert^2\right)\end{align}
	where $c>0$ is a positive constant. Hence, since $y_0\in {\rm H}^{2,\infty}(\omega, \mathbb{R}^3)$ implies ${\rm Cof}\,{\rm I}_{y_0} \in {\rm L}^\infty(\omega)$  and ${\rm II}_{y_0} \in {\rm L}^\infty(\omega)$ we deduce
		\begin{align}\label{u9nn}\int_\omega	\Big(\lVert
	\mathcal{G}_{\rm{Koiter}}^{\rm{lin}}(v) \rVert^2&+\lVert \mathcal{R}_{\rm{Koiter}}^{\rm{lin}}({v})-2\,\mathcal{G}_{\rm{Koiter}}^{\rm{lin}}({v}) \,{\rm L}_{y_0} \rVert^2\Big) {\rm da}\notag\\
	&\geq \min\{\frac{c}{8\, \lVert {\rm Cof}\,{\rm I}_{y_0}\rVert^2_{ {\rm L}^\infty(\omega)}\, \lVert {\rm II}_{y_0} \rVert^2_{ {\rm L}^\infty(\omega)}},\frac{1}{2}\}\int_\omega\left(\lVert
	\mathcal{G}_{\rm{Koiter}}^{\rm{lin}}(v) \rVert^2+\lVert \mathcal{R}_{\rm{Koiter}}^{\rm{lin}}({v}) \rVert^2\right){\rm da}. \end{align}
	Thus, there exists the positive constant $c>0$ such that
		\begin{align}\label{u9nnn}	\lVert
	\mathcal{G}_{\rm{Koiter}}^{\rm{lin}}(v) \rVert^2_{{\rm L}^2(\omega)}&+\lVert \mathcal{R}_{\rm{Koiter}}^{\rm{lin}}({v})-2\,\mathcal{G}_{\rm{Koiter}}^{\rm{lin}}({v}) \,{\rm L}_{y_0} \rVert^2_{{\rm L}^2(\omega)}\geq c\,\left(\lVert
	\mathcal{G}_{\rm{Koiter}}^{\rm{lin}}(v) \rVert^2_{{\rm L}^2(\omega)}+\lVert \mathcal{R}_{\rm{Koiter}}^{\rm{lin}}({v}) \rVert^2_{{\rm L}^2(\omega)}\right). \end{align}
	The third difference is that the Korn inequality  for  shells whose middle surface has little regularity given by Theorem \ref{Kornlr2} is now applicable, instead of the Korn inequality on general surfaces.
	\end{proof}

\subsection{The constrained linear $O(h^3)$-Cosserat shell model. Conditional existence }
By ignoring the $O(h^5)$ terms from the functional defining the variational problem of the  constrained linear $O(h^3)$-Cosserat shell model, we obtain the linearised  constrained Cosserat $O(h^3)$-shell model which   is  to find the midsurface displacement vector field
$v:\omega\subset\mathbb{R}^2\to\mathbb{R}^3$  minimizing on $\omega$:
\begin{align}\label{minvarlch3con}
I= \int_{\omega}   \,\, \Big[ & \Big(h+{\rm K}\,\dfrac{h^3}{12}\Big)\,
W_{{\rm shell}}^{\infty}\big([\nabla\Theta \,]^{-T}
[\mathcal{G}_{\rm{Koiter}}^{\rm{lin}}]^\flat [\nabla\Theta \,]^{-1} \big)\vspace{2.5mm}\notag\\&+   \dfrac{h^3}{12}\,
W_{{\rm shell}}^{\infty}  \big( [\nabla\Theta \,]^{-T} [\mathcal{R}_{\rm{Koiter}}^{\rm{lin}}-2\,\mathcal{G}_{\rm{Koiter}}^{\rm{lin}} \,{\rm L}_{y_0} ]^\flat
[\nabla\Theta \,]^{-1}\big) 
\vspace{2.5mm}\\&+\dfrac{h^3}{3} \mathrm{ H}\,\mathcal{W}_{{\rm shell}}^{\infty}  \big(  [\nabla\Theta \,]^{-T}
[\mathcal{G}_{\rm{Koiter}}^{\rm{lin}}]^\flat [\nabla\Theta \,]^{-1} , [\nabla\Theta \,]^{-T} [\mathcal{R}_{\rm{Koiter}}^{\rm{lin}}-2\,\mathcal{G}_{\rm{Koiter}}^{\rm{lin}} \,{\rm L}_{y_0} ]^\flat
[\nabla\Theta \,]^{-1} \big)\notag\\&-
\dfrac{h^3}{6}\, \mathcal{W}_{{\rm shell}}^{\infty}  \big(  [\nabla\Theta \,]^{-T}
[\mathcal{G}_{\rm{Koiter}}^{\rm{lin}}]^\flat [\nabla\Theta \,]^{-1} , [\nabla\Theta \,]^{-T}
[(\mathcal{R}_{\rm{Koiter}}^{\rm{lin}}-2\,\mathcal{G}_{\rm{Koiter}}^{\rm{lin}} \,{\rm L}_{y_0})\,{\rm L}_{y_0}]^\flat [\nabla\Theta \,]^{-1}\big)\vspace{2.5mm}\notag\\&+ \Big(h-{\rm K}\,\dfrac{h^3}{12}\Big)\,
W_{\mathrm{curv}}\big( (\nabla\vartheta_\infty \, |\, 0) \; [\nabla\Theta \,]^{-1} \big)    \vspace{2.5mm}\notag\\&+  \dfrac{h^3}{12}\,
W_{\mathrm{curv}}\big(  (\nabla\vartheta_\infty \, |\, 0) \; {\rm L}_{y_0}^\flat[\nabla\Theta \,]^{-1} \big)  
\Big] \,{\rm det}(\nabla y_0|n_0)       \,\mathrm{\rm da}- \overline{\Pi}_\infty(v),\notag
\end{align}
such that
\begin{align}\label{contsyml2h3}
\mathcal{G}_{\rm{Koiter}}^{\rm{lin}} \,{\rm L}_{y_0} &\in{\rm \textbf{Sym}}(2) \qquad \textrm{and}\qquad 
(\mathcal{R}_{\rm{Koiter}}^{\rm{lin}}-2\,\mathcal{G}_{\rm{Koiter}}^{\rm{lin}} \,{\rm L}_{y_0})\,{\rm L}_{y_0}\in{\rm \textbf{Sym}}(2).
\end{align}

The coercivity result given by the following lemma, see \cite{GhibaNeffPartIII},  and similar arguments as in proving the existence results in the theory including terms up to order $O(h^5)$ lead to  {corresponding} existence  {results}.

In the  constrained nonlinear Cosserat shell model up to $O(h^3)$
the shell energy density $W^{(h^3)}(\mathcal{E}_{\infty}, \mathcal{K}_{\infty})$ is given by 
\begin{align}\label{h3energy} W^{(h^3)}(\mathcal{E}_{\infty}, \mathcal{K}_{\infty})=&\,  \Big(h+{\rm K}\,\dfrac{h^3}{12}\Big)\,
W_{\mathrm{shell}}^\infty\big(    \mathcal{E}_{\infty}\big)+  \dfrac{h^3}{12}\,
W_{\mathrm{shell}}^\infty  \big(   \mathcal{E}_{\infty} \, {\rm B}_{y_0} +   {\rm C}_{y_0} \mathcal{K}_{\infty} \big) \notag \\&
-\dfrac{h^3}{3} \mathrm{ H}\,\mathcal{W}_{\mathrm{shell}}^\infty  \big(  \mathcal{E}_{\infty} ,
\mathcal{E}_{\infty}{\rm B}_{y_0}+{\rm C}_{y_0}\, \mathcal{K}_{\infty} \big)+
\dfrac{h^3}{6}\, \mathcal{W}_{\mathrm{shell}}^\infty  \big(  \mathcal{E}_{\infty} ,
( \mathcal{E}_{\infty}{\rm B}_{y_0}+{\rm C}_{y_0}\, \mathcal{K}_{\infty}){\rm B}_{y_0} \big)\notag\vspace{2.5mm}\\
&+  \Big(h-{\rm K}\,\dfrac{h^3}{12}\Big)\,
W_{\mathrm{curv}}\big(  \mathcal{K}_{\infty} \big)    +  \dfrac{h^3}{12}
W_{\mathrm{curv}}\big(  \mathcal{K}_{\infty}   {\rm B}_{y_0} \,  \big).
\end{align}
\begin{lemma}\label{coerh3r}{\rm [Coercivity  in the theory including terms up to order $O(h^3)$]} Assume that the constitutive coefficients are  such that $\mu>0$, $2\,\lambda+\mu> 0$, $b_1>0$, $b_2>0$, $b_3>0$ and $L_{\rm c}>0$ and let $c_2^+$  denote the smallest eigenvalue  of
	$
	W_{\mathrm{curv}}(  S ),
	$
	and $c_1^+$ and $ C_1^+>0$ denote the smallest and the largest eigenvalues of the quadratic form $W_{\mathrm{shell}}^\infty(  S)$.
	If the thickness $h$ satisfies  one of the following conditions:
	\begin{align}\label{fcondh3b}
	&{\rm i)} \ h\max\{\sup_{x\in\omega}|\kappa_1|, \sup_{x\in\omega}|\kappa_2|\}<\alpha \quad \
	\text{\bf and} \quad  	h^2<\frac{(5-2\sqrt{6})(\alpha^2-12)^2}{4\, \alpha^2}\frac{ {c_2^+}}{C_1^+} \quad  \text{with}  \quad  \quad 0<\alpha<2;\notag\\\\
	&{\rm ii)} \ h\max\{\sup_{x\in\omega}|\kappa_1|, \sup_{x\in\omega}|\kappa_2|\}<\frac{1}{a}  \quad \text{\bf and} \quad  a>\max\Big\{1 + \frac{\sqrt{2}}{2},\frac{1+\sqrt{1+3\frac{C_1^+}{c_1^+}}}{2}\Big\},\notag
	\end{align} then
	the total energy density $W^{(h^3)}(\mathcal{E}_{\infty}, \mathcal{K}_{\infty})$
	is coercive, in the sense that  there exists   a constant $a_1^+>0$ such that 
	$	W^{(h^3)}(\mathcal{E}_{\infty}, \mathcal{K}_{\infty})\,\geq\, a_1^+\, \big(  \lVert \mathcal{E}_{\infty}\rVert ^2 + \lVert \mathcal{K}_{\infty}\rVert ^2\,\big) ,
	$ where
	$a_1^+$ depends on the constitutive coefficients. In fact,  the following inequality holds true
	\begin{align}
	W^{(h^3)}(\mathcal{E}_{\infty}, \mathcal{K}_{\infty})\,\geq\, a_1^+\, \big( \lVert \mathcal{E}_{\infty}\rVert ^2 +\lVert
	\mathcal{E}_{\infty}{\rm B}_{y_0}+{\rm C}_{y_0}\, \mathcal{K}_{\infty}  \rVert ^2+ \lVert \mathcal{K}_{\infty}\rVert ^2\,\big).
	\end{align}
\end{lemma}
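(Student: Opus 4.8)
The plan is to prove the stated inequality pointwise on $\omega$ with a constant uniform over $\overline{\omega}$, and then integrate against $\det(\nabla y_0|n_0)$; the latter is bounded above and below on $\overline{\omega}$ because linear independence of $a_1,a_2$ forces $\det[\nabla_x\Theta(0)]\ge c_0>0$, so the passage from pointwise to integrated coercivity is exactly as in the proofs of Lemma \ref{lemmaLy} and Theorem \ref{th1cond}. Throughout write $P:=\mathcal{E}_{\infty}$, $Q:=\mathcal{E}_{\infty}{\rm B}_{y_0}+{\rm C}_{y_0}\mathcal{K}_{\infty}$ and $B:={\rm B}_{y_0}$, and set $\mu_0:=\max\{\sup_{x\in\omega}|\kappa_1|,\sup_{x\in\omega}|\kappa_2|\}$. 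Because admissible fields satisfy the two symmetry constraints \eqref{contsym}, all of $P$, $Q$ and $QB$ lie in ${\rm Sym}(3)$; hence there are constants $0<c_1^+\le C_1^+$ and $0<c_2^+\le C_2^+$ (the eigenvalue bounds named in the statement, cf.\ \eqref{pozitivdef}) with $c_1^+\lVert S\rVert^2\le W_{\mathrm{shell}}^\infty(S)\le C_1^+\lVert S\rVert^2$ on ${\rm Sym}(3)$ and $c_2^+\lVert X\rVert^2\le W_{\mathrm{curv}}(X)\le C_2^+\lVert X\rVert^2$ on $\mathbb{R}^{3\times3}$, which apply to every argument below; moreover $\mathcal{W}_{\mathrm{shell}}^\infty$ is the polar form of the positive definite form $W_{\mathrm{shell}}^\infty$, so $|\mathcal{W}_{\mathrm{shell}}^\infty(S,T)|\le\sqrt{W_{\mathrm{shell}}^\infty(S)}\,\sqrt{W_{\mathrm{shell}}^\infty(T)}$.

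First I would split $W^{(h^3)}=W_{\mathrm{sh}}+W_{\mathrm{cv}}$ according to \eqref{h3energy}, where
\begin{align*}
W_{\mathrm{sh}}&=\Big(h+{\rm K}\tfrac{h^3}{12}\Big)W_{\mathrm{shell}}^\infty(P)+\tfrac{h^3}{12}W_{\mathrm{shell}}^\infty(Q)-\tfrac{h^3}{3}{\rm H}\,\mathcal{W}_{\mathrm{shell}}^\infty(P,Q)+\tfrac{h^3}{6}\mathcal{W}_{\mathrm{shell}}^\infty(P,QB),\\
W_{\mathrm{cv}}&=\Big(h-{\rm K}\tfrac{h^3}{12}\Big)W_{\mathrm{curv}}(\mathcal{K}_{\infty})+\tfrac{h^3}{12}W_{\mathrm{curv}}(\mathcal{K}_{\infty}B).
\end{align*}
The curvature part is easy: since $|{\rm K}|=|\kappa_1\kappa_2|\le\mu_0^2$, either alternative in \eqref{fcondh3b} gives $h-{\rm K}\tfrac{h^3}{12}\ge h\big(1-\tfrac{1}{12}(h\mu_0)^2\big)>0$, so dropping the nonnegative term $\tfrac{h^3}{12}W_{\mathrm{curv}}(\mathcal{K}_{\infty}B)$ yields $W_{\mathrm{cv}}\ge h\big(1-\tfrac{1}{12}(h\mu_0)^2\big)c_2^+\lVert\mathcal{K}_{\infty}\rVert^2$.

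The substantive step is to show $W_{\mathrm{sh}}\ge c\,(\lVert P\rVert^2+\lVert Q\rVert^2)$ with $c>0$ depending only on the constitutive constants. To do this I would first record the elementary geometric bounds $2|{\rm H}|=|\kappa_1+\kappa_2|\le2\mu_0$, $|{\rm K}|\le\mu_0^2$, and — in an inner product equivalent to the Frobenius one uniformly over $\overline{\omega}$, which is where the $C^3$ (resp.\ $H^{2,\infty}$) regularity of $y_0$ enters through the boundedness of $\widehat{\rm I}_{y_0}^{\pm1}$ and ${\rm L}_{y_0}$ — the multiplicativity‑type estimate $\lVert QB\rVert\le\mu_0\lVert Q\rVert$, hence $W_{\mathrm{shell}}^\infty(QB)\le\tfrac{C_1^+}{c_1^+}\mu_0^2\,W_{\mathrm{shell}}^\infty(Q)$; these are exactly the bounds underlying Lemma \ref{propcoerh5} in \cite{GhibaNeffPartIII}. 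Then, estimating the two indefinite cross terms by Cauchy--Schwarz for $\mathcal{W}_{\mathrm{shell}}^\infty$ followed by Young's inequality with a free parameter, $W_{\mathrm{sh}}$ is bounded below by a quadratic form in $\big(\sqrt{W_{\mathrm{shell}}^\infty(P)},\sqrt{W_{\mathrm{shell}}^\infty(Q)}\big)$ with diagonal entries $h+{\rm K}\tfrac{h^3}{12}$ and $\tfrac{h^3}{12}$ and off‑diagonal entry of order $h^3\mu_0\big(1+\tfrac12\sqrt{C_1^+/c_1^+}\big)$; its positive definiteness is a discriminant inequality, and inserting $h+{\rm K}\tfrac{h^3}{12}\ge h\big(1-\tfrac{1}{12}(h\mu_0)^2\big)$ turns it into an algebraic condition on $h\mu_0$. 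An optimized version of this computation gives exactly \eqref{fcondh3b}(ii): the threshold $a>\tfrac12\big(1+\sqrt{1+3C_1^+/c_1^+}\big)$ is the larger root of the resulting quadratic, while $a>1+\tfrac{\sqrt2}{2}$ is the positivity requirement coming from the elementary $h$‑estimates on the prefactors and on $\lVert B\rVert$; condition \eqref{fcondh3b}(i) arises from the alternative grouping in which $\lVert Q\rVert^2$ is partly supplied by the curvature energy discarded above (using ${\rm C}_{y_0}\mathcal{K}_{\infty}=Q-PB$), which is why the ratio $c_2^+/C_1^+$ and the factor $5-2\sqrt6=(\sqrt3-\sqrt2)^2$ then appear. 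Combining this with the bound on $W_{\mathrm{cv}}$ gives $W^{(h^3)}=W_{\mathrm{sh}}+W_{\mathrm{cv}}\ge a_1^+\big(\lVert P\rVert^2+\lVert Q\rVert^2+\lVert\mathcal{K}_{\infty}\rVert^2\big)$ for a suitable $a_1^+>0$, which is the claim; the weaker form with $\lVert\mathcal{E}_{\infty}\rVert^2+\lVert\mathcal{K}_{\infty}\rVert^2$ on the right follows a fortiori.

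The main obstacle is this last, sharp part: one must keep track of which Young split is used on each of the two indefinite terms $-\tfrac{h^3}{3}{\rm H}\,\mathcal{W}_{\mathrm{shell}}^\infty(P,Q)$ and $\tfrac{h^3}{6}\mathcal{W}_{\mathrm{shell}}^\infty(P,QB)$ and of the sign‑indefinite prefactor $h+{\rm K}\tfrac{h^3}{12}$, so as to recover \emph{precisely} the two thresholds in \eqref{fcondh3b} rather than mere smallness of $h$ — the roots $\tfrac12(1+\sqrt{1+3C_1^+/c_1^+})$, $1+\tfrac{\sqrt2}{2}$ and $5-2\sqrt6$ being the positivity/discriminant thresholds of the corresponding quadratics. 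All remaining steps — the (weighted) Frobenius‑norm bounds on $B$, ${\rm H}$, ${\rm K}$, the Cauchy--Schwarz and Young estimates, and the uniformization over $\overline{\omega}$ — are routine and parallel the $O(h^5)$ case of Lemma \ref{propcoerh5} together with the arguments in Lemma \ref{lemmaLy} and Theorem \ref{th1cond}.
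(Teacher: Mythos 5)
A preliminary remark: the paper itself contains no proof of this lemma — it is quoted verbatim from \cite{GhibaNeffPartIII} (``see \cite{GhibaNeffPartIII}''), so the benchmark is the full computation carried out there, and your sketch has to be judged on whether it would actually reproduce that computation. It does identify the right general machinery (split \eqref{h3energy} into shell and curvature parts, keep the positive prefactors $h\pm{\rm K}\tfrac{h^3}{12}$ under control via $|{\rm K}|\le(h\mu_0)^2/h^2$, estimate the two indefinite couplings by Cauchy--Schwarz for the polar form $\mathcal{W}_{\mathrm{shell}}^\infty$ plus Young with free parameters, and close with a $2\times 2$ discriminant condition, letting the curvature energy absorb part of the shell terms in case (i)). That is indeed the shape of the argument in the reference.

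The genuine gap is that the quantitative heart of the lemma is never established. The entire content of the statement is that the \emph{specific} thresholds in \eqref{fcondh3b} — the factor $(5-2\sqrt{6})(\alpha^2-12)^2/(4\alpha^2)\,c_2^+/C_1^+$ in (i), and $a>1+\tfrac{\sqrt2}{2}$, $a>\tfrac12\big(1+\sqrt{1+3\,C_1^+/c_1^+}\big)$ in (ii) — suffice for coercivity; your proposal replaces this by the assertion that ``an optimized version of this computation gives exactly'' these constants, which is precisely what has to be proved. Moreover, executing the steps as you describe them would not yield these constants: the crude bounds $|{\rm H}|\le\mu_0$, $|{\rm K}|\le\mu_0^2$ together with a Frobenius Cauchy--Schwarz and a discriminant test produce a smallness condition on $h\mu_0$ of a different algebraic form, and the key inequality you invoke, $\lVert Q\,{\rm B}_{y_0}\rVert\le\mu_0\lVert Q\rVert$ in the Frobenius norm, is unjustified: ${\rm B}_{y_0}$ is only \emph{similar} to ${\rm L}_{y_0}^\flat$ (eigenvalues $\kappa_1,\kappa_2,0$) and is in general not symmetric, so $\mu_0$ controls its spectral radius but not its operator norm; your fallback of passing to ``an equivalent inner product'' introduces equivalence constants depending on $y_0$ that are incompatible with recovering the sharp, $y_0$-independent thresholds of \eqref{fcondh3b}. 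The proof in \cite{GhibaNeffPartIII} avoids this by exploiting the specific algebraic structure of ${\rm B}_{y_0}$ and ${\rm C}_{y_0}$ (expressing everything through ${\rm H}$, ${\rm K}$, respectively $\kappa_1,\kappa_2$, rather than through blunt norm submultiplicativity), and that finer bookkeeping is exactly what produces $5-2\sqrt6=(\sqrt3-\sqrt2)^2$ and the two roots in (ii). A minor additional point: the lemma is a pointwise algebraic statement about the energy density, so your opening reduction ``prove pointwise, then integrate against $\det(\nabla y_0|n_0)$'' is superfluous here.
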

Indeed, we have:

\begin{theorem}\label{th1condh3}{\rm [A conditional existence result for the theory including terms up to order $O(h^3)$  on a general surface]}
	Let there be given a domain $\omega\subset \mathbb{R}^2$ and an injective mapping $y_0\in {\rm C}^{3}(\overline{\omega}, \mathbb{R}^3)$ such that the two vectors $a_\alpha=\partial_{x_\alpha}y_0$, $\alpha=1,2$, are linear independent at all points of $\overline{\omega}$.	Assume that the  admissible set $\mathcal{A}^{\rm lin}_\infty$ is non-empty and that the linear operator
	$
	{\Pi}^{\rm lin}_\infty$ is bounded. 
	Assume that the constitutive coefficients are  such that $\mu>0$, $2\,\lambda+\mu> 0$, $b_1>0$, $b_2>0$, $b_3>0$ and $L_{\rm c}>0$ and let $c_2^+$  denote the smallest eigenvalue  of
	$
	W_{\mathrm{curv}}(  S ),
	$
	and $c_1^+$ and $ C_1^+>0$ denote the smallest and the largest eigenvalues of the quadratic form $W_{\mathrm{shell}}^\infty(  S)$.
 {	If the thickness $h$ satisfies  one of the  conditions \eqref{fcondh3b},}
 then the  problem \eqref{minvarlch3con} admits a unique solution
	$v\in  \mathcal{A}^{\rm lin}_\infty$.
\end{theorem}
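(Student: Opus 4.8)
The plan is to run, essentially line for line, the argument of the proof of Theorem~\ref{th1cond}, the sole change being that the coercivity input Lemma~\ref{propcoerh5} is replaced by its $O(h^3)$-counterpart Lemma~\ref{coerh3r}. First I would observe that, exactly as the bilinear form $\mathcal{B}^{\rm lin}_\infty$ was obtained from the quadratic functional \eqref{minvarlccond}, the functional \eqref{minvarlch3con} induces a symmetric bilinear form $\mathcal{B}^{(h^3)}_\infty$ on $\mathcal{A}^{\rm lin}_\infty\times\mathcal{A}^{\rm lin}_\infty$ -- namely the polarisation of the shell energy density $W^{(h^3)}$ of \eqref{h3energy}, evaluated at the linearised deformation measures and integrated against ${\rm det}(\nabla y_0|n_0)$ -- so that the weak form of \eqref{minvarlch3con} reads: find $v\in\mathcal{A}^{\rm lin}_\infty$ with $\mathcal{B}^{(h^3)}_\infty(v,\widetilde v)={\Pi}^{\rm lin}_\infty(\widetilde v)$ for all $\widetilde v\in\mathcal{A}^{\rm lin}_\infty$. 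Since $h$ satisfies one of the conditions \eqref{fcondh3b} and the constitutive coefficients satisfy $\mu>0$, $2\lambda+\mu>0$, $b_1,b_2,b_3>0$, $L_{\rm c}>0$, Lemma~\ref{coerh3r} supplies a constant $a_1^+>0$ with
\begin{align*}
W^{(h^3)}(\mathcal{E}^{\rm lin}_\infty(v),\mathcal{K}^{\rm lin}_\infty(v))\,\geq\, a_1^+\,\big(\lVert\mathcal{E}^{\rm lin}_\infty(v)\rVert^2 + \lVert\mathcal{E}^{\rm lin}_\infty(v)\,{\rm B}_{y_0}+{\rm C}_{y_0}\,\mathcal{K}^{\rm lin}_\infty(v)\rVert^2 + \lVert\mathcal{K}^{\rm lin}_\infty(v)\rVert^2\big),
\end{align*}
and integrating over $\omega$ -- using ${\rm det}(\nabla y_0|n_0)\geq c_0>0$, which holds because $y_0\in{\rm C}^3(\overline\omega,\mathbb{R}^3)$ with $a_1,a_2$ linearly independent on $\overline\omega$ -- I would get coercivity of $\mathcal{B}^{(h^3)}_\infty$ in the three ${\rm L}^2(\omega)$-quantities on the right.

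From here the chain of estimates is taken over verbatim from Theorem~\ref{th1cond}: the lower bounds \eqref{u3}, \eqref{u5}, \eqref{u6} -- which use only that $\lambda_{\rm min}(\widehat{\rm I}^{-1}_{y_0})$ is bounded below on $\overline\omega$ by a positive constant, as shown there -- give
\begin{align*}
\mathcal{B}^{(h^3)}_\infty(v,v)\,\geq\, c\,\big(\lVert\mathcal{G}_{\rm{Koiter}}^{\rm{lin}}(v)\rVert^2_{{\rm L}^2(\omega)} + \lVert\mathcal{R}_{\rm{Koiter}}^{\rm{lin}}(v)-2\,\mathcal{G}_{\rm{Koiter}}^{\rm{lin}}(v)\,{\rm L}_{y_0}\rVert^2_{{\rm L}^2(\omega)} + \lVert\nabla A_\infty(v)\rVert^2_{{\rm L}^2(\omega)}\big)
\end{align*}
for all $v\in\mathcal{A}^{\rm lin}_\infty$; then Lemma~\ref{lemmaLy} upgrades the first two summands to $\lVert\mathcal{G}_{\rm{Koiter}}^{\rm{lin}}(v)\rVert^2_{{\rm L}^2(\omega)}+\lVert\mathcal{R}_{\rm{Koiter}}^{\rm{lin}}(v)\rVert^2_{{\rm L}^2(\omega)}$, the Korn-type inequality on a general surface, Theorem~\ref{Kornlr}, bounds this from below by $c\,(\lVert v_1\rVert^2_{{\rm H}^1(\omega;\mathbb{R})}+\lVert v_2\rVert^2_{{\rm H}^1(\omega;\mathbb{R})}+\lVert v_3\rVert^2_{{\rm H}^2(\omega;\mathbb{R})})$ on $\mathcal{A}^{\rm lin}_\infty\subset\mathcal{A}_1$, and keeping in addition $\lVert\nabla A_\infty(v)\rVert^2_{{\rm L}^2(\omega)}$ yields coercivity of $\mathcal{B}^{(h^3)}_\infty$ with respect to the norm with which $\mathcal{A}^{\rm lin}_\infty$ is endowed. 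Since $\mathcal{A}^{\rm lin}_\infty$ is a closed subspace of a Hilbert space, $\mathcal{B}^{(h^3)}_\infty$ is bilinear, symmetric, bounded (each term of \eqref{h3energy} is a continuous quadratic form in $\mathcal{E}^{\rm lin}_\infty(v),\mathcal{K}^{\rm lin}_\infty(v)$, which depend continuously on $v$ in the $\mathcal{A}^{\rm lin}_\infty$-norm) and coercive, and ${\Pi}^{\rm lin}_\infty$ is linear and bounded by hypothesis, the Lax--Milgram theorem would close the argument.

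I do not anticipate a genuine obstacle. The only new ingredient relative to Theorem~\ref{th1cond} is the coercivity Lemma~\ref{coerh3r}, whose thickness hypotheses \eqref{fcondh3b} take the place of \eqref{rcondh5}; the lower bound on $\lambda_{\rm min}(\widehat{\rm I}^{-1}_{y_0})$, Lemma~\ref{lemmaLy}, and the surface Korn inequality are reused without change. The one place needing a line of care is to verify that discarding the $O(h^5)$-contributions preserves both the boundedness of $\mathcal{B}^{(h^3)}_\infty$ and the nonnegativity of the quadratic forms it involves -- both immediate from the explicit form \eqref{h3energy} together with Lemma~\ref{coerh3r} -- and that ``sufficiently small $h$'' is to be understood in the sense of the two alternative bounds in \eqref{fcondh3b} required for the coercivity lemma to apply.
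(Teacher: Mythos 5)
Your proposal is correct and follows exactly the route the paper intends: the paper itself gives no separate proof of this theorem, stating only that the coercivity Lemma \ref{coerh3r} for the $O(h^3)$ energy density \eqref{h3energy} combined with ``similar arguments as in proving the existence results in the theory including terms up to order $O(h^5)$'' (i.e., the estimates \eqref{u3}, \eqref{u5}, \eqref{u6}, Lemma \ref{lemmaLy}, the surface Korn inequality of Theorem \ref{Kornlr}, and Lax--Milgram) yields the result. Your substitution of Lemma \ref{coerh3r} for Lemma \ref{propcoerh5}, with the thickness conditions \eqref{fcondh3b} replacing \eqref{rcondh5}, is precisely that argument.
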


\begin{theorem}\label{th1cond2h3}{\rm [A conditional existence result for the theory including terms up to order $O(h^3)$  for shells whose middle surface has little regularity]}\\
	Let there be given a domain $\omega\subset \mathbb{R}^2$ and an injective mapping $y_0\in {\rm H}^{2,\infty}(\omega, \mathbb{R}^3)$ such that the two vectors $a_\alpha=\partial_{x_\alpha}y_0$, $\alpha=1,2$, are linear independent at all points of $\overline{\omega}$. 	Assume that the admissible set $\widehat{\mathcal{A}}^{\rm lin}_\infty$ is non-empty and that the linear operator
	$
	{\Pi}^{\rm lin}_\infty$ is bounded.	
	Assume that the constitutive coefficients are  such that $\mu>0$, $2\,\lambda+\mu> 0$, $b_1>0$, $b_2>0$, $b_3>0$ and $L_{\rm c}>0$ and let $c_2^+$  denote the smallest eigenvalue  of
	$
	W_{\mathrm{curv}}(  S ),
	$
	and $c_1^+$ and $ C_1^+>0$ denote the smallest and the largest eigenvalues of the quadratic form $W_{\mathrm{shell}}^\infty(  S)$.
	 {	If the thickness $h$ satisfies  one of the  conditions \eqref{fcondh3b},}  then the  problem \eqref{minvarlch3con} admits a unique solution
	$v\in  \widehat{\mathcal{A}}^{\rm lin}_\infty$.
\end{theorem}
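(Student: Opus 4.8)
The plan is to carry over, almost line for line, the argument used for Theorem~\ref{th1cond2}, with only the coercivity of the internal energy replaced: it now comes from the $O(h^3)$-coercivity Lemma~\ref{coerh3r} (applied to the density \eqref{h3energy}) instead of from Lemma~\ref{propcoerh5}. First I would put the minimization problem \eqref{minvarlch3con} into weak form exactly as in the $O(h^5)$-case, producing a symmetric bilinear form $\mathcal{B}$ --- namely $\mathcal{B}^{\rm lin}_\infty$ with all its $O(h^5)$-terms dropped --- and the bounded linear functional $\Pi^{\rm lin}_\infty$, both acting on $\widehat{\mathcal{A}}^{\rm lin}_\infty$. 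Since the constraints defining $\widehat{\mathcal{A}}^{\rm lin}_\infty$ (the symmetry conditions \eqref{contsyml2h3}, the requirement $\nabla A_\infty(v)\in{\rm L}^2(\omega)$, and the homogeneous boundary conditions) are all linear in $v$, the set $\widehat{\mathcal{A}}^{\rm lin}_\infty$ is a closed linear subspace of the Hilbert space $\mathcal{A}_2$, and, by hypothesis, non-empty. Thus the whole statement reduces to the continuity and coercivity of $\mathcal{B}$ on $\widehat{\mathcal{A}}^{\rm lin}_\infty$, after which the Lax--Milgram theorem gives the unique solution.

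For coercivity I would proceed as in Theorem~\ref{th1cond2}. If the thickness satisfies one of the conditions \eqref{fcondh3b}, Lemma~\ref{coerh3r} yields, pointwise in $\omega$, a lower bound of $W^{(h^3)}(\mathcal{E}^{\rm lin}_\infty(v),\mathcal{K}^{\rm lin}_\infty(v))$ by $a_1^+\big(\lVert\mathcal{E}^{\rm lin}_\infty(v)\rVert^2+\lVert\mathcal{E}^{\rm lin}_\infty(v){\rm B}_{y_0}+{\rm C}_{y_0}\mathcal{K}^{\rm lin}_\infty(v)\rVert^2+\lVert\mathcal{K}^{\rm lin}_\infty(v)\rVert^2\big)$ (the same quadratic form being positive definite on ${\rm Sym}(3)\times\mathbb{R}^{3\times 3}$, it applies verbatim to the linear measures), so integrating against ${\rm det}(\nabla y_0|n_0)\ge c_0>0$ bounds $\mathcal{B}(v,v)$ from below by the squared ${\rm L}^2(\omega)$-norm of the same triple. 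Then, using the identities in \eqref{operatorscon} and \eqref{eq12l2} that rewrite $\mathcal{E}^{\rm lin}_\infty(v)$, $\mathcal{E}^{\rm lin}_\infty(v){\rm B}_{y_0}+{\rm C}_{y_0}\mathcal{K}^{\rm lin}_\infty(v)$ and $\mathcal{K}^{\rm lin}_\infty(v)$ in terms of $\mathcal{G}_{\rm Koiter}^{\rm lin}(v)$, $\mathcal{R}_{\rm Koiter}^{\rm lin}(v)-2\mathcal{G}_{\rm Koiter}^{\rm lin}(v){\rm L}_{y_0}$ and $\nabla A_\infty(v)$ respectively, together with the eigenvalue estimate $\lambda_{\rm min}(\widehat{\rm I}^{-1}_{y_0})\ge(\sqrt{2}\,\lVert{\rm Cof}\,{\rm I}_{y_0}\rVert)^{-1}$ and the fact that $y_0\in{\rm H}^{2,\infty}$ forces ${\rm Cof}\,{\rm I}_{y_0},\,{\rm II}_{y_0}\in{\rm L}^\infty(\omega)$, I would reproduce the three ${\rm L}^2(\omega)$-estimates from the proof of Theorem~\ref{th1cond2}; combining these with the integrated Lemma~\ref{lemmaLy}-type inequality \eqref{u9nn}--\eqref{u9nnn}, which also holds under $y_0\in{\rm H}^{2,\infty}$, gives $\mathcal{B}(v,v)\ge c\big(\lVert\mathcal{G}_{\rm Koiter}^{\rm lin}(v)\rVert^2_{{\rm L}^2(\omega)}+\lVert\mathcal{R}_{\rm Koiter}^{\rm lin}(v)\rVert^2_{{\rm L}^2(\omega)}+\lVert\nabla A_\infty(v)\rVert^2_{{\rm L}^2(\omega)}\big)$.

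To close the argument I would invoke the Korn inequality for surfaces of little regularity, Theorem~\ref{Kornlr2}, which is available precisely because $y_0\in{\rm H}^{2,\infty}(\omega,\mathbb{R}^3)$ with $a_1,a_2$ linearly independent and $\widehat{\mathcal{A}}^{\rm lin}_\infty\subset\mathcal{A}_2$; it upgrades the bound above to control of the full norm $\lVert v\rVert^2_{{\rm H}^1(\omega;\mathbb{R}^3)}+\sum_{\alpha,\beta=1,2}|\langle\partial_{x_\alpha x_\beta}v,n_0\rangle_{{\rm L}^2(\omega)}|^2+\lVert\nabla A_\infty(v)\rVert^2_{{\rm L}^2(\omega)}$ with which $\widehat{\mathcal{A}}^{\rm lin}_\infty$ is equipped, i.e. the desired coercivity; continuity of $\mathcal{B}$ follows at once from the upper bounds \eqref{pozitivdef} and the ${\rm L}^\infty$-boundedness of the geometric coefficients, and Lax--Milgram then delivers the unique $v\in\widehat{\mathcal{A}}^{\rm lin}_\infty$. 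The hard part is not any single estimate --- each is a low-regularity rerun of something already done for the $O(h^5)$-model --- but the fact that coercivity here hinges on Lemma~\ref{coerh3r}, whose validity demands the two-parameter thickness-versus-curvature conditions \eqref{fcondh3b}, more restrictive than the single bound \eqref{rcondh5} used in the $O(h^5)$-case; one must check that \eqref{fcondh3b} keeps the pointwise coercivity constant $a_1^+$ bounded away from zero uniformly over $\overline{\omega}$ --- which is where the principal curvatures $\kappa_1,\kappa_2$ and the constitutive ratios $C_1^+/c_1^+$ and $c_2^+/C_1^+$ enter --- while all accompanying geometric factors remain in ${\rm L}^\infty(\omega)$.
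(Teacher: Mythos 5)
Your proposal is correct and follows exactly the route the paper intends: the paper itself gives no separate proof of this theorem, stating only that Lemma \ref{coerh3r} together with "similar arguments as in proving the existence results in the theory including terms up to order $O(h^5)$" (i.e. the proof of Theorem \ref{th1cond2}, with its low-regularity estimates and the Korn inequality of Theorem \ref{Kornlr2}) yields the result, which is precisely what you carry out. The only inessential remark is that the uniformity of the coercivity constant $a_1^+$ over $\overline{\omega}$ is already built into Lemma \ref{coerh3r} via the $\sup$-conditions \eqref{fcondh3b}, so no additional check is needed there.
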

\begin{remark}
 {As we have already mentioned for the nonlinear Cosserat-shell models \cite{GhibaNeffPartII,GhibaNeffPartIII}, a large value for $\alpha$ will relax the first condition \eqref{fcondh3b}$_1$ while the other condition \eqref{fcondh3b}$_2$ on the thickness will remain more restrictive. While for the $O(h^5)$-model the conditions imposed on the thickness do  not depend on the constitutive paramentes, in $O(h^5)$-model the conditions \eqref{fcondh3b} are  expressed in terms of all  constitutive parameters, through $c_1^+$, $c_2^+$ and $C_1^+$.}
\end{remark}
\section{A modified constrained linear $O(h^5)$-Cosserat shell model. Unconditional existence }\label{sec:modified}\setcounter{equation}{0}

In order to present an unconditional existence result, independent of whether the admissible set is non empty, we need to modify the model slightly. We have already done so in the geometrically nonlinear case \cite{GhibaNeffPartIII} and here we just  adapt that procedure properly to the linearised case. 
\subsection{Variational problem for the modified constrained non-linear Cosserat shell model}
In the modified constrained Cosserat shell model	the (through the thickness reconstructed) strain tensor  is considered to be \begin{align}\label{extEm}
\widetilde{\mathcal{E}}_{s}  \; =\; &\,\quad \,\,
1\,\Big[  \mathcal{E}_{ \infty } - \frac{\lambda}{\lambda+2\mu}\,\tr( \mathcal{E}_{ \infty } )\; (0|0|n_0)\, (0|0|n_0)^T  \Big]
\notag\vspace{2.5mm}\\
& 
+x_3\Big[ {\rm \textbf{sym}}(\mathcal{E}_{ \infty } \, {\rm B}_{y_0} +  {\rm C}_{y_0} \mathcal{K}_{ \infty }) -
\frac{\lambda}{(\lambda+2\mu)}\, {\rm tr}  \, [{\rm \textbf{sym}}(\mathcal{E}_{ \infty } {\rm B}_{y_0} + {\rm C}_{y_0}\mathcal{K}_{ \infty } )]\; (0|0|n_0)\,  (0|0|n_0)^T  \Big]
\vspace{2.5mm}\\
& 
+x_3^2\,{\rm \textbf{sym}}\Big[\,(\mathcal{E}_{ \infty } \, {\rm B}_{y_0} +  {\rm C}_{y_0} \mathcal{K}_{ \infty }) {\rm B}_{y_0} \Big]\;+\; O(x_3^3).\notag
\end{align}
which is now symmetric by definition through the additional by  applied \textbf{symmetrization}.

This ansatz of 	the (through the thickness reconstructed) strain tensor  leads to a model in which only the symmetric parts of $\mathcal{E}_{ \infty } \, {\rm B}_{y_0} +  {\rm C}_{y_0} \mathcal{K}_{ \infty }$ and $(\mathcal{E}_{ \infty } \, {\rm B}_{y_0} +  {\rm C}_{y_0} \mathcal{K}_{ \infty }) {\rm B}_{y_0}$ are involved and there will be no need to assume  a priori  that they have to be symmetric. 

	The  resulting variational problem for the modified constrained Cosserat $O(h^5)$-shell model \cite{GhibaNeffPartIII} is  to find a deformation of the midsurface
$m:\omega\subset\mathbb{R}^2\to\mathbb{R}^3$  minimizing on $\omega$:
\begingroup
\begin{align}\label{modminvarmc}
\hspace*{-2.5cm}I= \int_{\omega}   \,\, \Big[ & \,\Big(h+{\rm K}\,\dfrac{h^3}{12}\Big)\,
W_{{\rm shell}}^{\infty}\big(    \sqrt{[\nabla\Theta ]^{-T}\,\widehat{\rm I}_m\,\id_2^{\flat }\,[\nabla\Theta ]^{-1}}-
\sqrt{[\nabla\Theta ]^{-T}\,\widehat{\rm I}_{y_0}\,\id_2^{\flat }\,[\nabla\Theta ]^{-1}} \big),\vspace{2.5mm}\notag\\ \hspace*{-2.5cm}   
& +   \Big(\dfrac{h^3}{12}\,-{\rm K}\,\dfrac{h^5}{80}\Big)\,
W_{{\rm shell}}^{\infty}  \Big({\rm \textbf{sym}} \Big[   \sqrt{[\nabla\Theta ]^{-T}\,\widehat{\rm I}_m\,[\nabla\Theta ]^{-1}}\,  [\nabla\Theta ]\Big({\rm L}_{y_0}^\flat - {\rm L}_m^\flat\Big)[\nabla\Theta ]^{-1}\Big]\Big) \notag \\\hspace*{-2.5cm}&
-\dfrac{h^3}{3} \mathrm{ H}\,\mathcal{W}_{{\rm shell}}^{\infty}  \Big(  \sqrt{[\nabla\Theta ]^{-T}\,\widehat{\rm I}_m\,\id_2^{\flat }\,[\nabla\Theta ]^{-1}}-
\sqrt{[\nabla\Theta ]^{-T}\,\widehat{\rm I}_{y_0}\,\id_2^{\flat }\,[\nabla\Theta ]^{-1}} ,\notag\\\hspace*{-2.5cm}
&\qquad \qquad \qquad\quad\   {\rm \textbf{sym}} \Big[   \sqrt{[\nabla\Theta ]^{-T}\,\widehat{\rm I}_m\,[\nabla\Theta ]^{-1}}\,  [\nabla\Theta ]\Big({\rm L}_{y_0}^\flat - {\rm L}_m^\flat\Big)[\nabla\Theta ]^{-1}\Big]\Big)\\\hspace*{-2.5cm}&+
\dfrac{h^3}{6}\, \mathcal{W}_{{\rm shell}}^{\infty}  \Big(  \sqrt{[\nabla\Theta ]^{-T}\,\widehat{\rm I}_m\,\id_2^{\flat }\,[\nabla\Theta ]^{-1}}-
\sqrt{[\nabla\Theta ]^{-T}\,\widehat{\rm I}_{y_0}\,\id_2^{\flat }\,[\nabla\Theta ]^{-1}} ,
\notag\\\hspace*{-2.5cm}
&\qquad \qquad \qquad\ \ {\rm \textbf{sym}}\Big[\sqrt{[\nabla\Theta ]^{-T}\,\widehat{\rm I}_m\,[\nabla\Theta ]^{-1}}\,  [\nabla\Theta ]\Big({\rm L}_{y_0}^\flat - {\rm L}_m^\flat\Big){\rm L}_{y_0}^\flat[\nabla\Theta ]^{-1}\Big]\Big)\vspace{2.5mm}\notag\\
&+ \,\dfrac{h^5}{80}\,\,
W_{\mathrm{mp}}^{\infty} \Big({\rm \textbf{sym}} \Big[\sqrt{[\nabla\Theta ]^{-T}\,\widehat{\rm I}_m\,[\nabla\Theta ]^{-1}}\,  [\nabla\Theta ]\Big({\rm L}_{y_0}^\flat - {\rm L}_m^\flat\Big){\rm L}_{y_0}^\flat[\nabla\Theta ]^{-1}\,\Big]\Big)\notag\vspace{2.5mm}\notag\\
&+ \Big(h-{\rm K}\,\dfrac{h^3}{12}\Big)\,
W_{\mathrm{curv}}\big(  \mathcal{K}_{ \infty } \big)    +  \Big(\dfrac{h^3}{12}\,-{\rm K}\,\dfrac{h^5}{80}\Big)\,
W_{\mathrm{curv}}\big(  \mathcal{K}_{ \infty }   {\rm B}_{y_0} \,  \big)  + \,\dfrac{h^5}{80}\,\,
W_{\mathrm{curv}}\big(  \mathcal{K}_{ \infty }   {\rm B}_{y_0}^2  \big)
\Big] \,{\rm det}\nabla \Theta        \,\mathrm{\rm da}\notag\\&\quad - \overline{\Pi}(m,{Q}_{ \infty }).\notag
\end{align}
\endgroup
The admissible set  $\mathcal{A}^{\rm mod}$  is  defined by\footnote{The definition of the admissible set $\mathcal{A}^{\rm mod}$  incorporates a weak reformulation of the imposed symmetry constraint $\mathcal{E}_{\infty} \in {\rm Sym}(3)$. We notice that the constraints $U\coloneqq {Q}_{ \infty }^T (\nabla  m|{Q}_{ \infty }Q_0.e_3)[\nabla\Theta ]^{-1} \in {\rm L^2}(\omega, {\rm Sym}^+(3))$ together with the compatibility conditions between ${Q}_{ \infty }\Big|_{\gamma_d}$ and the values of $m$ on ${\gamma_d}$ will imply that $Q_{\infty}$ and $m$ are not independent variables and
	$
	{Q}_{ \infty }={\rm polar}\big[(\nabla  m|n) [\nabla\Theta ]^{-1}\big]\in\textrm{SO}(3),
	$ 
	where $n=\frac{\partial_{x_1} m\times \partial_{x_2} m}{\lVert\partial_{x_1} m\times \partial_{x_2} m\rVert}$ is the unit normal vector to the deformed midsurface. Assuming that 	the boundary data satisfy the conditions 
	${m}^*\in{\rm H}^1(\omega ,\mathbb{R}^3)$ and ${\rm polar}(\nabla {m}^*\,|\,n^*)\in{\rm H}^1(\omega, {\rm SO}(3))$, it follows that the admissible set is not empty.}
\begin{align}
\mathcal{A}^{\rm mod}=\Bigg\{(m,&{Q}_{ \infty })\in{\rm H}^1(\omega, \mathbb{R}^3)\times{\rm H}^1(\omega, {\rm SO}(3))\ \bigg| \  m\big|_{ \gamma_d}=m^*, \qquad{Q}_{ \infty }Q_0.e_3\big|_{ \gamma_d}=\,\dd\frac{\partial_{x_1}m^*\times \partial_{x_2}m^*}{\lVert \partial_{x_1}m^*\times \partial_{x_2}m^*\rVert }\notag\\ 
&\ U\coloneqq {Q}_{ \infty }^T (\nabla  m|{Q}_{ \infty }Q_0.e_3)[\nabla\Theta ]^{-1} \in {\rm L^2}(\omega, {\rm Sym}^+(3)),
\Bigg\}\notag
\end{align}
where 
\begingroup
\allowdisplaybreaks
\begin{align} 
\mathcal{K}_{ \infty } & = \, \Big(\mathrm{axl}({Q}_{ \infty }^T\,\partial_{x_1} {Q}_{ \infty })\,|\, \mathrm{axl}({Q}_{ \infty }^T\,\partial_{x_2} {Q}_{ \infty })\,|0\Big)[\nabla\Theta ]^{-1}, \vspace{2.5mm}\notag\\
{Q}_{ \infty }&={\rm polar}\big((\nabla  m|n) [\nabla\Theta ]^{-1}\big)=(\nabla m|n)[\nabla\Theta ]^{-1}\,\sqrt{[\nabla\Theta ]\,\widehat {\rm I}_{m}^{-1}\,[\nabla\Theta ]^{T}},\notag\\
W_{{\rm shell}}^{\infty}(  S)  &=   \mu\,\lVert\,   S\rVert^2  +\,\dfrac{\lambda\,\mu}{\lambda+2\mu}\,\big[ \mathrm{tr}   \, (S)\big]^2,\qquad 
\mathcal{W}_{{\rm shell}}^{\infty}(  S,  T) =   \mu\,\bigl\langle  S,   T\bigr\rangle+\,\dfrac{\lambda\,\mu}{\lambda+2\mu}\,\mathrm{tr}  (S)\,\mathrm{tr}  (T), \vspace{2.5mm}\\
W_{\mathrm{mp}}^{\infty}(  S)&= \mu\,\lVert  S\rVert^2+\,\dfrac{\lambda}{2}\,\big[ \mathrm{tr}\,   (S)\big]^2 \qquad \quad\forall \ S,T\in{\rm Sym}(3), \notag\vspace{2.5mm}\\
W_{\mathrm{curv}}(  X )&=\mu\,{\rm L}_c^2\left( b_1\,\lVert \dev\,\sym \, X\rVert^2+b_2\,\lVert\skw \,X\rVert^2+b_3\,
[\tr(X)]^2\right) \quad\qquad \forall\  X\in\mathbb{R}^{3\times 3}.\notag
\end{align}
\endgroup

\subsection{The modified constrained linear $O(h^5)$-Cosserat shell model}
Taking into account the modified constitutive nonlinear $O(h^5)$-Cosserat shell model,
the  variational problem for the constrained Cosserat $O(h^5)$-shell linear model  is now  to find a deformation of the midsurface
$v:\omega\subset\mathbb{R}^2\to\mathbb{R}^3$  minimizing on $\omega$:
\begin{align}\label{minvarlcmod}
I= \int_{\omega}   \,\, \Big[ & \Big(h+{\rm K}\,\dfrac{h^3}{12}\Big)\,
W_{{\rm shell}}^{\infty}\big([\nabla\Theta \,]^{-T}
[\mathcal{G}_{\rm{Koiter}}^{\rm{lin}}]^\flat [\nabla\Theta \,]^{-1} \big)\vspace{2.5mm}\notag\\&+   \Big(\dfrac{h^3}{12}\,-{\rm K}\,\dfrac{h^5}{80}\Big)\,
W_{{\rm shell}}^{\infty}  \big( [\nabla\Theta \,]^{-T} {\rm \textbf{sym}}[\mathcal{R}_{\rm{Koiter}}^{\rm{lin}}-2\,\mathcal{G}_{\rm{Koiter}}^{\rm{lin}} \,{\rm L}_{y_0} ]^\flat
[\nabla\Theta \,]^{-1}\big) 
\vspace{2.5mm}\notag\\&+\dfrac{h^3}{3} \mathrm{ H}\,\mathcal{W}_{{\rm shell}}^{\infty}  \big(  [\nabla\Theta \,]^{-T}
[\mathcal{G}_{\rm{Koiter}}^{\rm{lin}}]^\flat [\nabla\Theta \,]^{-1} , [\nabla\Theta \,]^{-T} {\rm \textbf{sym}}[\mathcal{R}_{\rm{Koiter}}^{\rm{lin}}-2\,\mathcal{G}_{\rm{Koiter}}^{\rm{lin}} \,{\rm L}_{y_0} ]^\flat
[\nabla\Theta \,]^{-1} \big)\notag\\&-
\dfrac{h^3}{6}\, \mathcal{W}_{{\rm shell}}^{\infty}  \big(  [\nabla\Theta \,]^{-T}
[\mathcal{G}_{\rm{Koiter}}^{\rm{lin}}]^\flat [\nabla\Theta \,]^{-1} , [\nabla\Theta \,]^{-T}
{\rm \textbf{sym}}[(\mathcal{R}_{\rm{Koiter}}^{\rm{lin}}-2\,\mathcal{G}_{\rm{Koiter}}^{\rm{lin}} \,{\rm L}_{y_0})\,{\rm L}_{y_0}]^\flat [\nabla\Theta \,]^{-1}\big)\vspace{2.5mm}\notag\\&+ \,\dfrac{h^5}{80}\,\,
W_{\mathrm{mp}}^{\infty} \big([\nabla\Theta \,]^{-T}
{\rm \textbf{sym}}[(\mathcal{R}_{\rm{Koiter}}^{\rm{lin}}-2\,\mathcal{G}_{\rm{Koiter}}^{\rm{lin}} \,{\rm L}_{y_0})\,{\rm L}_{y_0} ]^\flat [\nabla\Theta \,]^{-1}\big)\vspace{2.5mm}\\&+ \Big(h-{\rm K}\,\dfrac{h^3}{12}\Big)\,
W_{\mathrm{curv}}\big( (\nabla\vartheta_\infty \, |\, 0) \; [\nabla\Theta \,]^{-1} \big)    \vspace{2.5mm}\notag\\&+  \Big(\dfrac{h^3}{12}\,-{\rm K}\,\dfrac{h^5}{80}\Big)\,
W_{\mathrm{curv}}\big(  (\nabla\vartheta_\infty \, |\, 0) \; {\rm L}_{y_0}^\flat[\nabla\Theta \,]^{-1} \big)  \vspace{2.5mm}\notag\\&+ \,\dfrac{h^5}{80}\,\,
W_{\mathrm{curv}}\big(   (\nabla\vartheta_\infty \, |\, 0) \; ({\rm L}_{y_0}^\flat)^2[\nabla\Theta \,]^{-1}  \big)\notag
\Big] \,{\rm det}(\nabla y_0|n_0)       \,\mathrm{\rm da}- \widetilde{\Pi}(u),\notag
\end{align}
where 
\begin{align}
W_{{\rm shell}}^{\infty}(  S)  &=   \mu\,\lVert\,   S\rVert^2  +\,\dfrac{\lambda\,\mu}{\lambda+2\mu}\,\big[ \mathrm{tr}   \, (S)\big]^2,\qquad \qquad \qquad 
\mathcal{W}_{{\rm shell}}^{\infty}(  S,  T) =   \mu\,\bigl\langle  S,   T\bigr\rangle+\,\dfrac{\lambda\,\mu}{\lambda+2\mu}\,\mathrm{tr}  (S)\,\mathrm{tr}  (T), \notag\vspace{2.5mm}\\
W_{\mathrm{mp}}^{\infty}(  S)&= \mu\,\lVert  S\rVert^2+\,\dfrac{\lambda}{2}\,\big[ \mathrm{tr}\,   (S)\big]^2 \quad \forall \ S,T\in{\rm Sym}(3), \notag\vspace{2.5mm}\\
W_{\mathrm{curv}}(  X )&=\mu\,{\rm L}_c^2\left( b_1\,\lVert \dev\,\sym \, X\rVert^2+b_2\,\lVert\skw \,X\rVert^2+b_3\,
[\tr(X)]^2\right) \quad \forall\  X\in\mathbb{R}^{3\times 3}.\notag
\end{align}

\subsection{Unconditional existence results}

The  sets  of admissible functions are accordingly  defined by
\begin{align}\label{21lu}
\mathcal{A}_{\rm lin}^{\rm mod}=\Bigg\{v=(v_1,v_2,v_3)&\in {\rm H}^1(\omega,\mathbb{R})\times{\rm H}^1(\omega,\mathbb{R})\times {\rm H}^2(\omega,\mathbb{R}) \,\big|\,  v_1=v_2=v_3=\bigl\langle \nabla v_3,\nu\bigr\rangle=0 \ \ \text{on}\ \ \partial \omega,\notag\\&\nabla [\skw( (\nabla v\,|\dd\sum_{\alpha=1,2}\bigl\langle n_0, \partial_{x_\alpha}v\bigr\rangle\, a^\alpha)[\nabla \Theta]^{-1})]\in {\rm L}^2(\omega)
\Bigg\},
\end{align}
and
\begin{align}\label{21l2u}
\widehat{\mathcal{A}}_{\rm lin}^{\rm mod}=\Bigg\{v\in {\rm H}_0^1(\omega,\mathbb{R}^3)&\,|\, \bigl\langle \partial_{x_\alpha x_\beta}v, n_0\bigr\rangle\in {\rm L}^2(\omega),\ \ \ \nabla [\skw( (\nabla v\,|\dd\sum_{\alpha=1,2}\bigl\langle n_0, \partial_{x_\alpha}v\bigr\rangle\, a^\alpha)[\nabla \Theta]^{-1})]\in {\rm L}^2(\omega)
\Bigg\},
\end{align}
depending on the expressions of $\mathcal{G}_{\rm{Koiter}}^{\rm{lin}}$ and $
\mathcal{R}_{\rm{Koiter}}^{\rm{lin}}$ which we consider, i.e.,  \eqref{formK} and \eqref{formR} \textbf{or} \eqref{equ11} and \eqref{equ12}, respectively. 

\textit{Hence, we have avoided the problem that the new set   of admissible functions may be non-empty} and, since all our inequalities from the proof of the conditional existence result involve only symmetric matrices, we have the following unconditional existence result:

\begin{theorem}\label{th1uncond}{\rm [Unconditional existence result for the theory including terms up to order $O(h^5)$  on a general surface]}
	Let there be given a domain $\omega\subset \mathbb{R}^2$ and an injective mapping $y_0\in {\rm C}^{3}(\overline{\omega}, \mathbb{R}^3)$ such that the two vectors $a_\alpha=\partial_{x_\alpha}y_0$, $\alpha=1,2$, are linear independent at all points of $\overline{\omega}$.	Assume that the linear operator
	$
	{\Pi}^{\rm lin}_\infty$ is bounded.	
	Then, for sufficiently small values of the thickness $h$ such that  {such that condition \eqref{rcondh5} is satisfied}
	and for constitutive coefficients  such that $\mu>0$, $2\,\lambda+\mu> 0$, $b_1>0$, $b_2>0$ and $b_3>0$, the  problem \eqref{minvarlcmod} admits a unique solution
	$v\in  \mathcal{A}_{\rm lin}^{\rm mod}$.
\end{theorem}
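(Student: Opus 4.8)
The plan is to reproduce the Lax--Milgram argument of Theorem~\ref{th1cond} almost verbatim: passing to the modified model \eqref{minvarlcmod} has erased the two symmetry constraints from the admissible set, yet the coercivity estimates survive because every matrix occurring in them is symmetric.

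First I would put \eqref{minvarlcmod} into weak form. Let $\mathcal{B}^{\rm mod}_\infty:\mathcal{A}_{\rm lin}^{\rm mod}\times\mathcal{A}_{\rm lin}^{\rm mod}\to\mathbb{R}$ be the bilinear form obtained from $\mathcal{B}^{\rm lin}_\infty$ by inserting ${\rm \textbf{sym}}[\,\cdot\,]$ in front of each occurrence of $\mathcal{E}^{\rm lin}_\infty(v)\,{\rm B}_{y_0}+{\rm C}_{y_0}\,\mathcal{K}^{\rm lin}_\infty(v)$ and of $(\mathcal{E}^{\rm lin}_\infty(v)\,{\rm B}_{y_0}+{\rm C}_{y_0}\,\mathcal{K}^{\rm lin}_\infty(v))\,{\rm B}_{y_0}$; the weak problem is then to find $v\in\mathcal{A}_{\rm lin}^{\rm mod}$ with $\mathcal{B}^{\rm mod}_\infty(v,\widetilde v)={\Pi}^{\rm lin}_\infty(\widetilde v)$ for all $\widetilde v\in\mathcal{A}_{\rm lin}^{\rm mod}$. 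Equipped with the analogue of the norm used on $\mathcal{A}^{\rm lin}_\infty$, the space $\mathcal{A}_{\rm lin}^{\rm mod}$ is a Hilbert space --- it is the closed subspace of ${\rm H}^1(\omega;\mathbb{R})\times{\rm H}^1(\omega;\mathbb{R})\times{\rm H}^2(\omega;\mathbb{R})$ singled out by $v_1=v_2=v_3=\langle\nabla v_3,\nu\rangle=0$ on $\partial\omega$ and $\nabla A_\infty(v)\in{\rm L}^2(\omega)$ --- and, in contrast with $\mathcal{A}^{\rm lin}_\infty$, it is trivially non-empty, being a linear subspace containing $0$. Continuity of $\mathcal{B}^{\rm mod}_\infty$ on it follows from the uniform boundedness on $\overline\omega$ of $[\nabla\Theta]^{-1}$, ${\rm L}_{y_0}$, ${\rm K}$, ${\rm H}$ and $\det(\nabla y_0|n_0)$ (all guaranteed by $y_0\in{\rm C}^3(\overline\omega,\mathbb{R}^3)$), while continuity of ${\Pi}^{\rm lin}_\infty$ is assumed.

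The substantive point is coercivity, which I would obtain exactly as in \eqref{u1}--\eqref{u12}. Since $\mathcal{E}^{\rm lin}_\infty(v)$ is symmetric and, after the modification, every argument of $W^\infty_{\rm shell}$, $\mathcal{W}^\infty_{\rm shell}$ and $W^\infty_{\rm mp}$ entering $\mathcal{B}^{\rm mod}_\infty$ is symmetric by construction, the quadratic-form coercivity of Lemma~\ref{propcoerh5} (Proposition~3.3 of \cite{GhibaNeffPartIII}) applies word for word with ${\rm \textbf{sym}}[\mathcal{E}^{\rm lin}_\infty(v)\,{\rm B}_{y_0}+{\rm C}_{y_0}\,\mathcal{K}^{\rm lin}_\infty(v)]$ in the role of $\mathcal{E}^{\rm lin}_\infty(v)\,{\rm B}_{y_0}+{\rm C}_{y_0}\,\mathcal{K}^{\rm lin}_\infty(v)$: under \eqref{rcondh5} and $\mu>0$, $2\lambda+\mu>0$, $b_1,b_2,b_3>0$ there is $a_1^+>0$ with $\mathcal{B}^{\rm mod}_\infty(v,v)\ge a_1^+\big(\lVert\mathcal{E}^{\rm lin}_\infty(v)\rVert^2_{{\rm L}^2(\omega)}+\lVert{\rm \textbf{sym}}[\mathcal{E}^{\rm lin}_\infty(v)\,{\rm B}_{y_0}+{\rm C}_{y_0}\,\mathcal{K}^{\rm lin}_\infty(v)]\rVert^2_{{\rm L}^2(\omega)}+\lVert\mathcal{K}^{\rm lin}_\infty(v)\rVert^2_{{\rm L}^2(\omega)}\big)$. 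Combining \eqref{eq12l2} with the pointwise estimates \eqref{u3}, \eqref{u5}, \eqref{u6} --- which only use the lower bound for $\lambda_{\rm min}(\widehat{\rm I}^{-1}_{y_0})$ on $\overline\omega$, valid because $y_0\in{\rm C}^3$ --- and the identity ${\rm \textbf{sym}}[\mathcal{R}^{\rm lin}_{\rm Koiter}(v)-2\,\mathcal{G}^{\rm lin}_{\rm Koiter}(v)\,{\rm L}_{y_0}]=\mathcal{R}^{\rm lin}_{\rm Koiter}(v)-2\,{\rm \textbf{sym}}[\mathcal{G}^{\rm lin}_{\rm Koiter}(v)\,{\rm L}_{y_0}]$ (valid since $\mathcal{R}^{\rm lin}_{\rm Koiter}(v)\in{\rm Sym}(2)$), I would then get $\mathcal{B}^{\rm mod}_\infty(v,v)\ge c\big(\lVert\mathcal{G}^{\rm lin}_{\rm Koiter}(v)\rVert^2_{{\rm L}^2(\omega)}+\lVert{\rm \textbf{sym}}[\mathcal{R}^{\rm lin}_{\rm Koiter}(v)-2\,\mathcal{G}^{\rm lin}_{\rm Koiter}(v)\,{\rm L}_{y_0}]\rVert^2_{{\rm L}^2(\omega)}+\lVert\nabla A_\infty(v)\rVert^2_{{\rm L}^2(\omega)}\big)$. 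A line-by-line copy of the proof of Lemma~\ref{lemmaLy}, with ${\rm \textbf{sym}}[\mathcal{G}^{\rm lin}_{\rm Koiter}(v)\,{\rm L}_{y_0}]$ replacing $\mathcal{G}^{\rm lin}_{\rm Koiter}(v)\,{\rm L}_{y_0}$ (the only ingredients are sub-multiplicativity of the Frobenius norm, $\lVert{\rm \textbf{sym}}[M]\rVert\le\lVert M\rVert$, the reverse triangle inequality, and the uniform bound $\lVert{\rm L}_{y_0}\rVert<c$), then upgrades the middle term to $\lVert\mathcal{R}^{\rm lin}_{\rm Koiter}(v)\rVert^2$, and the Korn inequality on a general surface, Theorem~\ref{Kornlr}, converts $\lVert\mathcal{G}^{\rm lin}_{\rm Koiter}(v)\rVert^2_{{\rm L}^2(\omega)}+\lVert\mathcal{R}^{\rm lin}_{\rm Koiter}(v)\rVert^2_{{\rm L}^2(\omega)}$ into $\lVert v_1\rVert^2_{{\rm H}^1(\omega;\mathbb{R})}+\lVert v_2\rVert^2_{{\rm H}^1(\omega;\mathbb{R})}+\lVert v_3\rVert^2_{{\rm H}^2(\omega;\mathbb{R})}$, so that $\mathcal{B}^{\rm mod}_\infty(v,v)$ bounds the full norm of $\mathcal{A}_{\rm lin}^{\rm mod}$ from below. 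Lax--Milgram then yields the unique solution $v\in\mathcal{A}_{\rm lin}^{\rm mod}$.

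The only step that genuinely needs care is the coercivity: one must check that the symmetrization built into \eqref{minvarlcmod} spoils neither the quadratic-form coercivity of Lemma~\ref{propcoerh5} nor the estimate of Lemma~\ref{lemmaLy}. This is precisely the point of the modified construction --- $\mathcal{E}^{\rm lin}_\infty(v)$ and every shell-energy argument are already symmetric, so symmetrization produces no new terms and all constants stay put --- but it should still be written out explicitly. Everything else (the lower bound on $\lambda_{\rm min}(\widehat{\rm I}^{-1}_{y_0})$, continuity of $\mathcal{B}^{\rm mod}_\infty$, the application of Korn's inequality) coincides with the conditional argument; in particular the companion unconditional result for $y_0\in{\rm H}^{2,\infty}$ follows in the same manner, with Theorem~\ref{Kornlr} replaced by Theorem~\ref{Kornlr2}, exactly as in the proof of Theorem~\ref{th1cond2}.
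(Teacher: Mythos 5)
Your proposal is correct and follows essentially the same route as the paper, which gives no separate proof of Theorem \ref{th1uncond} but simply notes that the modified admissible set is trivially non-empty and that every inequality in the proof of Theorem \ref{th1cond} (coercivity via Lemma \ref{propcoerh5}, the pointwise estimates, the Lemma \ref{lemmaLy}-type bound, Korn's inequality, Lax--Milgram) involves only symmetric matrices and therefore carries over verbatim with the symmetrized quantities. Your explicit checks --- in particular that ${\rm sym}[\mathcal{R}_{\rm{Koiter}}^{\rm{lin}}-2\,\mathcal{G}_{\rm{Koiter}}^{\rm{lin}}{\rm L}_{y_0}]=\mathcal{R}_{\rm{Koiter}}^{\rm{lin}}-2\,{\rm sym}[\mathcal{G}_{\rm{Koiter}}^{\rm{lin}}{\rm L}_{y_0}]$ and that $\lVert{\rm sym}\,M\rVert\le\lVert M\rVert$ keep the Lemma \ref{lemmaLy} argument intact --- are exactly the details the paper leaves implicit.
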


\begin{theorem}\label{th1uncond2}{\rm [Unconditional existence result for the theory including terms up to order $O(h^5)$  for shells whose middle surface has little regularity]}
	Let there be given a domain $\omega\subset \mathbb{R}^2$ and an injective mapping $y_0\in {\rm H}^{2,\infty}(\omega, \mathbb{R}^3)$ such that the two vectors $a_\alpha=\partial_{x_\alpha}y_0$, $\alpha=1,2$, are linear independent at all points of $\overline{\omega}$. 	Assume  that the linear operator
	$
	{\Pi}^{\rm lin}_\infty$ is bounded.	
	Then, for sufficiently small values of the thickness $h$  {such that condition \eqref{rcondh5} is satisfied}
	and for constitutive coefficients  such that $\mu>0$, $2\,\lambda+\mu> 0$, $b_1>0$, $b_2>0$ and $b_3>0$, the  problem \eqref{minvarlcmod} admits a unique solution
	$v\in  \widehat{\mathcal{A}}_{\rm lin}^{\rm mod}$.
\end{theorem}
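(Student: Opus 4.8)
The plan is to follow the proof of Theorem~\ref{th1cond2} step by step, the only --- but decisive --- change being that the admissible set $\widehat{\mathcal{A}}_{\rm lin}^{\rm mod}$ defined by \eqref{21l2u} carries no symmetry constraint, so it is a genuine Hilbert space: a closed subspace of the Koiter space $\mathcal{A}_2$ intersected with $\{v\,|\,\nabla A_\infty(v)\in{\rm L}^2(\omega)\}$, endowed with the norm $\lVert v\rVert^2=\lVert v\rVert^2_{{\rm H}^1(\omega;\mathbb{R}^3)}+\sum_{\alpha,\beta}\lvert\langle\partial_{x_\alpha x_\beta}v,n_0\rangle_{{\rm L}^2(\omega)}\rvert^2+\lVert\nabla A_\infty(v)\rVert^2_{{\rm L}^2(\omega)}$, and in particular it is always non-empty (it contains $v=0$). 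First I would rewrite the minimisation of \eqref{minvarlcmod} in weak form --- find $v\in\widehat{\mathcal{A}}_{\rm lin}^{\rm mod}$ with $\mathcal{B}_\infty^{\rm mod}(v,\widetilde v)={\Pi}_\infty^{\rm lin}(\widetilde v)$ for all $\widetilde v\in\widehat{\mathcal{A}}_{\rm lin}^{\rm mod}$, where $\mathcal{B}_\infty^{\rm mod}$ is obtained by polarising the internal part of \eqref{minvarlcmod} --- observe that $\mathcal{B}_\infty^{\rm mod}$ is bounded and symmetric and that ${\Pi}_\infty^{\rm lin}$ is bounded by hypothesis, and then reduce everything to coercivity of $\mathcal{B}_\infty^{\rm mod}$ plus the Lax--Milgram theorem.

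For coercivity I would first invoke the modified analogue of Lemma~\ref{propcoerh5}: in \eqref{minvarlcmod} the membrane/bending-coupling terms enter only through ${\rm \textbf{sym}}\big(\mathcal{E}_\infty^{\rm lin}(v){\rm B}_{y_0}+{\rm C}_{y_0}\mathcal{K}_\infty^{\rm lin}(v)\big)$ and ${\rm \textbf{sym}}\big((\mathcal{E}_\infty^{\rm lin}(v){\rm B}_{y_0}+{\rm C}_{y_0}\mathcal{K}_\infty^{\rm lin}(v)){\rm B}_{y_0}\big)$ while the curvature terms are unchanged, so the coercivity argument of \cite{GhibaNeffPartIII} --- which only ever manipulates symmetric tensors --- applies verbatim and gives, for $h$ satisfying \eqref{rcondh5} and the stated sign conditions on the moduli, a constant $a_1^+>0$ with
\begin{align*}
\mathcal{B}_\infty^{\rm mod}(v,v)\;\geq\; a_1^+\int_\omega\Big(&\lVert\mathcal{E}_\infty^{\rm lin}(v)\rVert^2+\lVert{\rm \textbf{sym}}\big(\mathcal{E}_\infty^{\rm lin}(v){\rm B}_{y_0}+{\rm C}_{y_0}\mathcal{K}_\infty^{\rm lin}(v)\big)\rVert^2\\
&+\lVert\mathcal{K}_\infty^{\rm lin}(v)\rVert^2\Big){\rm det}(\nabla y_0|n_0)\,{\rm da}.
\end{align*}
Using $\mathcal{E}_\infty^{\rm lin}(v)=[\nabla\Theta]^{-T}[\mathcal{G}_{\rm{Koiter}}^{\rm{lin}}(v)]^\flat[\nabla\Theta]^{-1}$, the identity ${\rm \textbf{sym}}\big([\nabla\Theta]^{-T}M^\flat[\nabla\Theta]^{-1}\big)=[\nabla\Theta]^{-T}({\rm \textbf{sym}}\,M)^\flat[\nabla\Theta]^{-1}$ (so that the $\,{\rm \textbf{sym}}\,$ applied in \eqref{minvarlcmod} to the $2\times2$ blocks matches the $3\times3$ symmetrisation in \eqref{extEm}), and the eigenvalue estimates from the proof of Theorem~\ref{th1cond2} --- with $\lambda_{\rm min}(\widehat{\rm I}^{-1}_{y_0})$ bounded below on $\overline{\omega}$ by a positive constant, since $y_0\in{\rm H}^{2,\infty}(\omega,\mathbb{R}^3)$ forces ${\rm Cof}\,{\rm I}_{y_0}\in{\rm L}^\infty(\omega)$ and $\det[\nabla_x\Theta(0)]\geq c_0>0$ --- this becomes
\begin{align*}
\mathcal{B}_\infty^{\rm mod}(v,v)\;\geq\; c\big(&\lVert\mathcal{G}_{\rm{Koiter}}^{\rm{lin}}(v)\rVert^2_{{\rm L}^2(\omega)}+\lVert{\rm \textbf{sym}}\big(\mathcal{R}_{\rm{Koiter}}^{\rm{lin}}(v)-2\,\mathcal{G}_{\rm{Koiter}}^{\rm{lin}}(v)\,{\rm L}_{y_0}\big)\rVert^2_{{\rm L}^2(\omega)}\\
&+\lVert\nabla A_\infty(v)\rVert^2_{{\rm L}^2(\omega)}\big).
\end{align*}

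It then remains to recover $\lVert\mathcal{R}_{\rm{Koiter}}^{\rm{lin}}(v)\rVert^2_{{\rm L}^2(\omega)}$, for which I would use the symmetrised version of Lemma~\ref{lemmaLy}: since $\mathcal{R}_{\rm{Koiter}}^{\rm{lin}}$ is symmetric (see \eqref{equ12}), ${\rm \textbf{sym}}\big(\mathcal{R}_{\rm{Koiter}}^{\rm{lin}}-2\,\mathcal{G}_{\rm{Koiter}}^{\rm{lin}}{\rm L}_{y_0}\big)=\mathcal{R}_{\rm{Koiter}}^{\rm{lin}}-2\,{\rm \textbf{sym}}\big(\mathcal{G}_{\rm{Koiter}}^{\rm{lin}}{\rm L}_{y_0}\big)$, with $\lVert{\rm \textbf{sym}}(\mathcal{G}_{\rm{Koiter}}^{\rm{lin}}{\rm L}_{y_0})\rVert\leq\lVert\mathcal{G}_{\rm{Koiter}}^{\rm{lin}}\rVert\,\lVert{\rm L}_{y_0}\rVert$ and $\lVert{\rm L}_{y_0}\rVert\leq(\det{\rm I}_{y_0})^{-1}\lVert{\rm Cof}\,{\rm I}_{y_0}\rVert\,\lVert{\rm II}_{y_0}\rVert\in{\rm L}^\infty(\omega)$; the same case split (${\rm L}_{y_0}=0$ versus ${\rm L}_{y_0}\neq0$), Frobenius submultiplicativity and reverse-triangle-inequality bookkeeping as in the proofs of Lemma~\ref{lemmaLy} and Theorem~\ref{th1cond2} then yield a constant $c>0$ with $\lVert\mathcal{G}_{\rm{Koiter}}^{\rm{lin}}(v)\rVert^2_{{\rm L}^2(\omega)}+\lVert{\rm \textbf{sym}}(\mathcal{R}_{\rm{Koiter}}^{\rm{lin}}(v)-2\,\mathcal{G}_{\rm{Koiter}}^{\rm{lin}}(v){\rm L}_{y_0})\rVert^2_{{\rm L}^2(\omega)}\geq c\big(\lVert\mathcal{G}_{\rm{Koiter}}^{\rm{lin}}(v)\rVert^2_{{\rm L}^2(\omega)}+\lVert\mathcal{R}_{\rm{Koiter}}^{\rm{lin}}(v)\rVert^2_{{\rm L}^2(\omega)}\big)$. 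Feeding this and then the Korn inequality for shells of little regularity, Theorem~\ref{Kornlr2} (applicable since $\widehat{\mathcal{A}}_{\rm lin}^{\rm mod}\subset\mathcal{A}_2$), gives $\mathcal{B}_\infty^{\rm mod}(v,v)\geq c\,\lVert v\rVert^2$ in the norm of $\widehat{\mathcal{A}}_{\rm lin}^{\rm mod}$, so Lax--Milgram delivers the unique solution. The main obstacle I anticipate is precisely the first step --- verifying carefully that the internal-energy coercivity of \cite{GhibaNeffPartIII} is genuinely insensitive to the symmetrisation, i.e.\ that every inequality entering that argument involves only symmetric tensors --- together with the purely technical task of keeping all the ``little regularity'' constants uniform over $\omega$ under the sole assumption $y_0\in{\rm H}^{2,\infty}$; the remaining manipulations are exactly those already carried out for Theorem~\ref{th1cond2}.
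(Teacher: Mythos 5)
Your proposal is correct and follows essentially the same route as the paper, which establishes the unconditional results simply by observing that the proof of the conditional Theorem~\ref{th1cond2} carries over verbatim once the symmetry constraints are dropped from the admissible set (so non-emptiness is automatic) and the strain measures are replaced by their symmetric parts. Your explicit symmetrized variant of Lemma~\ref{lemmaLy}, based on $\mathrm{sym}\,(\mathcal{R}_{\rm{Koiter}}^{\rm{lin}}-2\,\mathcal{G}_{\rm{Koiter}}^{\rm{lin}}{\rm L}_{y_0})=\mathcal{R}_{\rm{Koiter}}^{\rm{lin}}-2\,\mathrm{sym}\,(\mathcal{G}_{\rm{Koiter}}^{\rm{lin}}{\rm L}_{y_0})$ and $\lVert \mathrm{sym}\,(\mathcal{G}_{\rm{Koiter}}^{\rm{lin}}{\rm L}_{y_0})\rVert\leq\lVert\mathcal{G}_{\rm{Koiter}}^{\rm{lin}}\rVert\,\lVert{\rm L}_{y_0}\rVert$, is precisely the detail the paper leaves implicit in its remark that all inequalities of the conditional proof involve only symmetric matrices.
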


By ignoring the $O(h^5)$-terms from the functional defining the variational problem, we obtain the linearised  constrained Cosserat $O(h^3)$-shell model which   is  to find the midsurface displacement vector field
$v:\omega\subset\mathbb{R}^2\to\mathbb{R}^3$  minimizing on $\omega$:
\begin{align}\label{minvarlch3um}
I= \int_{\omega}   \,\, \Big[ & \Big(h+{\rm K}\,\dfrac{h^3}{12}\Big)\,
W_{{\rm shell}}^{\infty}\big([\nabla\Theta \,]^{-T}
[\mathcal{G}_{\rm{Koiter}}^{\rm{lin}}]^\flat [\nabla\Theta \,]^{-1} \big)\vspace{2.5mm}\notag\\&+   \dfrac{h^3}{12}\,
W_{{\rm shell}}^{\infty}  \big( [\nabla\Theta \,]^{-T} {\rm \textbf{sym}}\Big[\mathcal{R}_{\rm{Koiter}}^{\rm{lin}}-2\,\mathcal{G}_{\rm{Koiter}}^{\rm{lin}} \,{\rm L}_{y_0} \Big]^\flat
[\nabla\Theta \,]^{-1}\big) 
\vspace{2.5mm}\notag\\&+\dfrac{h^3}{3} \mathrm{ H}\,\mathcal{W}_{{\rm shell}}^{\infty}  \big(  [\nabla\Theta \,]^{-T}
[\mathcal{G}_{\rm{Koiter}}^{\rm{lin}}]^\flat [\nabla\Theta \,]^{-1} , [\nabla\Theta \,]^{-T} {\rm \textbf{sym}}\Big[\mathcal{R}_{\rm{Koiter}}^{\rm{lin}}-2\,\mathcal{G}_{\rm{Koiter}}^{\rm{lin}} \,{\rm L}_{y_0} \Big]^\flat
[\nabla\Theta \,]^{-1} \big)\notag\\&-
\dfrac{h^3}{6}\, \mathcal{W}_{{\rm shell}}^{\infty}  \big(  [\nabla\Theta \,]^{-T}
[\mathcal{G}_{\rm{Koiter}}^{\rm{lin}}]^\flat [\nabla\Theta \,]^{-1} , [\nabla\Theta \,]^{-T}
{\rm \textbf{sym}}\Big[(\mathcal{R}_{\rm{Koiter}}^{\rm{lin}}-2\,\mathcal{G}_{\rm{Koiter}}^{\rm{lin}} \,{\rm L}_{y_0})\,{\rm L}_{y_0}\Big]^\flat [\nabla\Theta \,]^{-1}\big)\\&+ \Big(h-{\rm K}\,\dfrac{h^3}{12}\Big)\,
W_{\mathrm{curv}}\big( (\nabla\vartheta_\infty \, |\, 0) \; [\nabla\Theta \,]^{-1} \big)    \vspace{2.5mm}\notag\\&+  \Big(\dfrac{h^3}{12}\,-{\rm K}\,\dfrac{h^5}{80}\Big)\,
W_{\mathrm{curv}}\big(  (\nabla\vartheta_\infty \, |\, 0) \; {\rm L}_{y_0}^\flat[\nabla\Theta \,]^{-1} \big)  
\Big] \,{\rm det}(\nabla y_0|n_0)       \,\mathrm{\rm da}- \widetilde{\Pi}(u).\notag
\end{align}

Accordingly, the following results are true:

\begin{theorem}\label{th1condh3u}{\rm [Unconditional existence result for the theory including terms up to order $O(h^3)$  on a general surface]}
	Let there be given a domain $\omega\subset \mathbb{R}^2$ and an injective mapping $y_0\in {\rm C}^{3}(\overline{\omega}, \mathbb{R}^3)$ such that the two vectors $a_\alpha=\partial_{x_\alpha}y_0$, $\alpha=1,2$, are linear independent at all points of $\overline{\omega}$.	Assume that the linear operator
	$
	{\Pi}^{\rm lin}_\infty$ is bounded.	
	Assume that the constitutive coefficients are  such that $\mu>0$, $2\,\lambda+\mu> 0$, $b_1>0$, $b_2>0$, $b_3>0$ and $L_{\rm c}>0$ and let $c_2^+$  denote the smallest eigenvalue  of
	$
	W_{\mathrm{curv}}(  S ),
	$
	and $c_1^+$ and $ C_1^+>0$ denote the smallest and the largest eigenvalues of the quadratic form $W_{\mathrm{shell}}^\infty(  S)$.
	 {	If the thickness $h$ satisfies  one of the  conditions \eqref{fcondh3b},} then the  problem \eqref{minvarlch3um} admits a unique solution
	$v\in  \mathcal{A}^{\rm lin}_{\rm mod}$.
\end{theorem}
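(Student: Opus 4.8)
The plan is to obtain the assertion as a direct application of the Lax--Milgram theorem to the weak form of \eqref{minvarlch3um} posed on $\mathcal{A}_{\rm lin}^{\rm mod}$, following verbatim the arguments of Theorem \ref{th1cond} and Theorem \ref{th1condh3} with two modifications: first, the $O(h^5)$-coercivity result is replaced by the $O(h^3)$-coercivity result of Lemma \ref{coerh3r}, whose hypotheses are precisely the conditions \eqref{fcondh3b}; second, every bending-type argument $\mathcal{E}^{\rm lin}_\infty(v)\,{\rm B}_{y_0}+{\rm C}_{y_0}\mathcal{K}^{\rm lin}_\infty(v)$ and $(\mathcal{E}^{\rm lin}_\infty(v)\,{\rm B}_{y_0}+{\rm C}_{y_0}\mathcal{K}^{\rm lin}_\infty(v)){\rm B}_{y_0}$ occurring in the energy is replaced by its symmetric part ${\rm \textbf{sym}}[\,\cdot\,]$, in accordance with \eqref{minvarlch3um}. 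Since the symmetrization is built into the modified energy, no symmetry constraint enters the definition \eqref{21lu} of $\mathcal{A}_{\rm lin}^{\rm mod}$, so this admissible set is automatically non-empty (it contains $v\equiv 0$, and the requirement $\nabla[\skw((\nabla v\,|\,\sum_{\alpha}\langle n_0,\partial_{x_\alpha}v\rangle\, a^\alpha)[\nabla\Theta]^{-1})]\in {\rm L}^2(\omega)$ is vacuous for smooth $v$); this is exactly why the result is unconditional.

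First I would record the ${\rm \textbf{sym}}$-version of Lemma \ref{lemmaLy}: there is $c>0$ with $\lVert\mathcal{G}_{\rm{Koiter}}^{\rm{lin}}(v)\rVert^2+\lVert{\rm \textbf{sym}}[\mathcal{R}_{\rm{Koiter}}^{\rm{lin}}(v)-2\,\mathcal{G}_{\rm{Koiter}}^{\rm{lin}}(v)\,{\rm L}_{y_0}]\rVert^2\ge c\big(\lVert\mathcal{G}_{\rm{Koiter}}^{\rm{lin}}(v)\rVert^2+\lVert\mathcal{R}_{\rm{Koiter}}^{\rm{lin}}(v)\rVert^2\big)$ everywhere in $\omega$. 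Its proof is the same Frobenius-norm splitting as in Lemma \ref{lemmaLy}: on the points where ${\rm L}_{y_0}=0$ the inequality is trivial; on the points where ${\rm L}_{y_0}\neq 0$ one uses submultiplicativity $\lVert{\rm \textbf{sym}}[\mathcal{G}_{\rm{Koiter}}^{\rm{lin}}(v)\,{\rm L}_{y_0}]\rVert\le\lVert\mathcal{G}_{\rm{Koiter}}^{\rm{lin}}(v)\,{\rm L}_{y_0}\rVert\le\lVert\mathcal{G}_{\rm{Koiter}}^{\rm{lin}}(v)\rVert\,\lVert{\rm L}_{y_0}\rVert$, the bound $\lVert{\rm L}_{y_0}\rVert\le c$ on $\overline{\omega}$ (which follows since $y_0\in {\rm C}^{3}(\overline{\omega},\mathbb{R}^3)$ with $a_1,a_2$ linearly independent forces $\det[\nabla_x\Theta(0)]\ge c_0>0$ and ${\rm Cof}\,{\rm I}_{y_0}\in {\rm C}^{1}(\overline{\omega};\mathbb{R}^{2\times 2})$, ${\rm II}_{y_0}\in {\rm C}(\overline{\omega};\mathbb{R}^{2\times2})$ bounded), and the reverse triangle inequality exactly as in the passage from \eqref{u10l} to \eqref{u10l1}.

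Next I would run the coercivity estimate for the modified bilinear form $\mathcal{B}_\infty^{\rm lin,mod}$ associated with \eqref{minvarlch3um}. Because the coercivity proof underlying Lemma \ref{coerh3r} treats its arguments as generic symmetric tensors, it applies unchanged to the modified energy density, yielding $a_1^+>0$ with $\mathcal{B}_\infty^{\rm lin,mod}(v,v)\ge a_1^+\int_\omega\big(\lVert\mathcal{E}^{\rm lin}_\infty(v)\rVert^2+\lVert{\rm \textbf{sym}}(\mathcal{E}^{\rm lin}_\infty(v){\rm B}_{y_0}+{\rm C}_{y_0}\mathcal{K}^{\rm lin}_\infty(v))\rVert^2+\lVert\mathcal{K}^{\rm lin}_\infty(v)\rVert^2\big)\,{\rm det}(\nabla y_0|n_0)\,{\rm da}$ for all $v\in\mathcal{A}_{\rm lin}^{\rm mod}$. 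Using \eqref{eq12l2}, namely $\mathcal{E}^{\rm lin}_\infty(v)=[\nabla\Theta]^{-T}[\mathcal{G}_{\rm{Koiter}}^{\rm{lin}}(v)]^\flat[\nabla\Theta]^{-1}$ and ${\rm \textbf{sym}}(\mathcal{E}^{\rm lin}_\infty(v){\rm B}_{y_0}+{\rm C}_{y_0}\mathcal{K}^{\rm lin}_\infty(v))=-[\nabla\Theta]^{-T}{\rm \textbf{sym}}[\mathcal{R}_{\rm{Koiter}}^{\rm{lin}}(v)-2\,\mathcal{G}_{\rm{Koiter}}^{\rm{lin}}(v)\,{\rm L}_{y_0}]^\flat[\nabla\Theta]^{-1}$, together with the bound $\lambda_{\rm min}(\widehat{\rm I}^{-1}_{y_0})\ge c>0$ on $\overline{\omega}$ established in the proof of Theorem \ref{th1cond}, and the relation $\lVert\mathcal{K}^{\rm lin}_\infty(v)\rVert^2\ge\tfrac{1}{2}\lambda_{\rm min}(\widehat{\rm I}^{-1}_{y_0})\lVert\nabla A_\infty(v)\rVert^2$ as in \eqref{u6}, I obtain $\mathcal{B}_\infty^{\rm lin,mod}(v,v)\ge c\big(\lVert\mathcal{G}_{\rm{Koiter}}^{\rm{lin}}(v)\rVert_{{\rm L}^2(\omega)}^2+\lVert{\rm \textbf{sym}}[\mathcal{R}_{\rm{Koiter}}^{\rm{lin}}(v)-2\,\mathcal{G}_{\rm{Koiter}}^{\rm{lin}}(v)\,{\rm L}_{y_0}]\rVert_{{\rm L}^2(\omega)}^2+\lVert\nabla A_\infty(v)\rVert_{{\rm L}^2(\omega)}^2\big)$. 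Absorbing the middle term by the ${\rm \textbf{sym}}$-version of Lemma \ref{lemmaLy}, and then applying the surface Korn inequality of Theorem \ref{Kornlr} (legitimate since $y_0\in {\rm C}^{3}(\overline{\omega},\mathbb{R}^3)$ and $\mathcal{A}_{\rm lin}^{\rm mod}\subset\mathcal{A}_1$), gives $\mathcal{B}_\infty^{\rm lin,mod}(v,v)\ge c\big(\lVert v_1\rVert_{{\rm H}^1(\omega)}^2+\lVert v_2\rVert_{{\rm H}^1(\omega)}^2+\lVert v_3\rVert_{{\rm H}^2(\omega)}^2+\lVert\nabla A_\infty(v)\rVert_{{\rm L}^2(\omega)}^2\big)$, which is coercivity in the norm of $\mathcal{A}_{\rm lin}^{\rm mod}$. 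Boundedness of $\mathcal{B}_\infty^{\rm lin,mod}$ is immediate, being a finite sum of ${\rm L}^2$-pairings of the strain measures weighted by bounded geometric coefficients, and ${\Pi}^{\rm lin}_\infty$ is bounded by hypothesis, so Lax--Milgram yields a unique $v\in\mathcal{A}_{\rm lin}^{\rm mod}$.

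The step I expect to be the main obstacle is precisely the positivity/coercivity of the energy density: one must make sure that the coercivity of Lemma \ref{coerh3r}, stated for symmetric tensors, transfers without modification to the modified linearised functional \eqref{minvarlch3um}. This is guaranteed exactly because every bending-type contribution is inserted through ${\rm \textbf{sym}}[\,\cdot\,]$, so all the tensor inequalities in the underlying estimate concern only symmetric matrices; this is the whole rationale for passing to the modified model. Everything else is a literal repetition of the conditional arguments, and, crucially, no surjectivity or compatibility question about the constraint set arises any more, which is what makes the statement unconditional.
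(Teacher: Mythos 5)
Your proposal is correct and follows essentially the same route as the paper: Theorem \ref{th1condh3u} is justified there by rerunning the conditional arguments of Theorems \ref{th1cond} and \ref{th1condh3} with the $O(h^3)$-coercivity of Lemma \ref{coerh3r} under the thickness conditions \eqref{fcondh3b}, observing that in the modified functional \eqref{minvarlch3um} every bending-type argument enters only through its symmetric part so all inequalities of the conditional proof (in particular the analogue of Lemma \ref{lemmaLy} and the Korn inequality of Theorem \ref{Kornlr}) remain valid, while non-emptiness of $\mathcal{A}_{\rm lin}^{\rm mod}$ is automatic, and Lax--Milgram concludes. Your explicitly stated and proved ``sym''-version of Lemma \ref{lemmaLy} is exactly the detail the paper leaves implicit in its remark that all inequalities of the conditional existence proof involve only symmetric matrices.
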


\begin{theorem}\label{th1cond2h3u}{\rm [Unconditional existence result for the theory including terms up to order $O(h^3)$  for shells whose middle surface has little regularity]}
	Let there be given a domain $\omega\subset \mathbb{R}^2$ and an injective mapping $y_0\in {\rm H}^{2,\infty}(\omega, \mathbb{R}^3)$ such that the two vectors $a_\alpha=\partial_{x_\alpha}y_0$, $\alpha=1,2$, are linear independent at all points of $\overline{\omega}$. 	Assume that the linear operator
	$
	{\Pi}^{\rm lin}_\infty$ is bounded.	
	Assume that the constitutive coefficients are  such that $\mu>0$, $2\,\lambda+\mu> 0$, $b_1>0$, $b_2>0$, $b_3>0$ and $L_{\rm c}>0$ and let $c_2^+$  denote the smallest eigenvalue  of
	$
	W_{\mathrm{curv}}(  S ),
	$
	and $c_1^+$ and $ C_1^+>0$ denote the smallest and the largest eigenvalues of the quadratic form $W_{\mathrm{shell}}^\infty(  S)$.
	 {	If the thickness $h$ satisfies  one of the  conditions \eqref{fcondh3b},}  then the  problem \eqref{minvarlch3um} admits a unique solution
	$v\in  \widehat{\mathcal{A}}^{\rm lin}_{\rm mod}$.
\end{theorem}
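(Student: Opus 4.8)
The plan is to run, \emph{mutatis mutandis}, the argument of the conditional $O(h^3)$ result Theorem~\ref{th1cond2h3}, while cashing in the \textbf{symmetrization} built into the modified functional~\eqref{minvarlch3um}. Since there the shell energy is evaluated only at the symmetrized tensors $\mathrm{sym}[\mathcal{R}_{\rm{Koiter}}^{\rm{lin}}-2\,\mathcal{G}_{\rm{Koiter}}^{\rm{lin}}\,{\rm L}_{y_0}]^\flat$ and $\mathrm{sym}[(\mathcal{R}_{\rm{Koiter}}^{\rm{lin}}-2\,\mathcal{G}_{\rm{Koiter}}^{\rm{lin}}\,{\rm L}_{y_0})\,{\rm L}_{y_0}]^\flat$, the symmetry restrictions~\eqref{contsyml2h3} that shrank the admissible set in~\eqref{21l2} no longer intervene, so $\widehat{\mathcal{A}}_{\rm lin}^{\rm mod}$ of~\eqref{21l2u} is a non-empty closed subspace of $\mathcal{A}_2$ and a Hilbert space for the norm $v\mapsto(\lVert v\rVert_{{\rm H}^1(\omega;\mathbb{R}^3)}^2+\sum_{\alpha,\beta=1,2}|\bigl\langle\partial_{x_\alpha x_\beta}v,n_0\bigr\rangle_{{\rm L}^2(\omega)}|^2+\lVert\nabla A_\infty(v)\rVert_{{\rm L}^2(\omega)}^2)^{1/2}$, the first two terms being those already present in~\eqref{admsetK2}. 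Writing $\mathcal{B}$ for the bilinear form obtained by polarising the quadratic functional in~\eqref{minvarlch3um}, its continuity and that of the bounded linear load functional $\Pi^{\rm lin}_\infty$ are granted, so by the Lax--Milgram theorem everything reduces to coercivity of $\mathcal{B}$ on $\widehat{\mathcal{A}}_{\rm lin}^{\rm mod}$.

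I would establish coercivity in four steps, each re-using a fact already in the paper. First, apply the $O(h^3)$ coercivity estimate of Lemma~\ref{coerh3r} --- valid under $\mu>0$, $2\,\lambda+\mu>0$, $b_1,b_2,b_3>0$, $L_{\rm c}>0$ and one of the thickness windows~\eqref{fcondh3b} --- observing that its proof in~\cite{GhibaNeffPartIII} only uses Frobenius-norm inequalities for symmetric matrices, so the same pointwise lower bound holds after $\mathcal{E}_\infty^{\rm lin}(v){\rm B}_{y_0}+{\rm C}_{y_0}\mathcal{K}_\infty^{\rm lin}(v)$ and $(\mathcal{E}_\infty^{\rm lin}(v){\rm B}_{y_0}+{\rm C}_{y_0}\mathcal{K}_\infty^{\rm lin}(v)){\rm B}_{y_0}$ are replaced by their symmetrizations, which is exactly the combination occurring in~\eqref{minvarlch3um}; this gives
\begin{align*}
\mathcal{B}(v,v)\;\geq\;c\int_\omega\Big(\lVert\mathcal{E}_\infty^{\rm lin}(v)\rVert^2+\lVert\mathrm{sym}(\mathcal{E}_\infty^{\rm lin}(v){\rm B}_{y_0}+{\rm C}_{y_0}\mathcal{K}_\infty^{\rm lin}(v))\rVert^2+\lVert\mathcal{K}_\infty^{\rm lin}(v)\rVert^2\Big)\,{\rm da}.
\end{align*}
Second, convert these tensor norms into Koiter-type norms via the identities~\eqref{eq12l2} and the chain~\eqref{u3}, \eqref{u5}, \eqref{u6} and their little-regularity refinements~\eqref{u5nn}, \eqref{u6nn}, using that $\lambda_{\rm min}(\widehat{\rm I}^{-1}_{y_0})$ is bounded below on $\overline{\omega}$ by a positive constant; for $y_0\in{\rm H}^{2,\infty}(\omega,\mathbb{R}^3)$ this rests, as in the proof of Theorem~\ref{th1cond2h3}, on $\det[\nabla_x\Theta(0)]\geq c_0>0$ and ${\rm Cof}\,{\rm I}_{y_0}\in{\rm L}^\infty(\omega)$. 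Third, invoke the little-regularity analogue of Lemma~\ref{lemmaLy} used in the proof of Theorem~\ref{th1cond2h3}, namely the estimates~\eqref{u9n}--\eqref{u9nnn}: writing $\mathrm{sym}(\mathcal{R}_{\rm{Koiter}}^{\rm{lin}}(v)-2\,\mathcal{G}_{\rm{Koiter}}^{\rm{lin}}(v)\,{\rm L}_{y_0})=\mathcal{R}_{\rm{Koiter}}^{\rm{lin}}(v)-2\,\mathrm{sym}(\mathcal{G}_{\rm{Koiter}}^{\rm{lin}}(v)\,{\rm L}_{y_0})$ and $\lVert\mathrm{sym}(\mathcal{G}_{\rm{Koiter}}^{\rm{lin}}(v)\,{\rm L}_{y_0})\rVert\leq\lVert\mathcal{G}_{\rm{Koiter}}^{\rm{lin}}(v)\rVert\,\lVert{\rm L}_{y_0}\rVert$ with $\lVert{\rm L}_{y_0}\rVert\in{\rm L}^\infty(\omega)$, the submultiplicativity and reverse-triangle manipulations of Lemma~\ref{lemmaLy} still give
\begin{align*}
\lVert\mathcal{G}_{\rm{Koiter}}^{\rm{lin}}(v)\rVert_{{\rm L}^2(\omega)}^2+\lVert\mathrm{sym}(\mathcal{R}_{\rm{Koiter}}^{\rm{lin}}(v)-2\,\mathcal{G}_{\rm{Koiter}}^{\rm{lin}}(v)\,{\rm L}_{y_0})\rVert_{{\rm L}^2(\omega)}^2\;\geq\;c\big(\lVert\mathcal{G}_{\rm{Koiter}}^{\rm{lin}}(v)\rVert_{{\rm L}^2(\omega)}^2+\lVert\mathcal{R}_{\rm{Koiter}}^{\rm{lin}}(v)\rVert_{{\rm L}^2(\omega)}^2\big).
\end{align*}
Fourth, combine Steps~1--3 with $\lVert\mathcal{K}_\infty^{\rm lin}(v)\rVert^2\geq c\,\lVert\nabla A_\infty(v)\rVert^2$ (from~\eqref{u6}, \eqref{u6nn}) and the little-regularity Korn inequality Theorem~\ref{Kornlr2} on $\mathcal{A}_2\supset\widehat{\mathcal{A}}_{\rm lin}^{\rm mod}$ to get $\mathcal{B}(v,v)\geq c\,\lVert v\rVert^2$ in the Hilbert norm of $\widehat{\mathcal{A}}_{\rm lin}^{\rm mod}$; Lax--Milgram then yields the unique solution. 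Passing from the $O(h^5)$ to the $O(h^3)$ model only amounts to dropping the $h^5$-terms, which are absent from~\eqref{minvarlch3um} in any case.

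The main obstacle is not analytic depth but bookkeeping: one must check that every inequality used in the conditional $O(h^3)$ proof survives when the arguments of $W_{\rm shell}^\infty$, $\mathcal{W}_{\rm shell}^\infty$ and $W_{\rm mp}^\infty$ are replaced by their symmetric parts --- which it does, since those inequalities use only the Frobenius norm, its submultiplicativity and the reverse triangle inequality, all stable under taking $\mathrm{sym}$ --- and that the multiplicative constants $\lambda_{\rm min}(\widehat{\rm I}^{-1}_{y_0})$, $\lVert{\rm L}_{y_0}\rVert$ and the constant in Lemma~\ref{lemmaLy} stay uniformly bounded on $\omega$ under the weaker hypothesis $y_0\in{\rm H}^{2,\infty}(\omega,\mathbb{R}^3)$, which follows from ${\rm Cof}\,{\rm I}_{y_0},{\rm II}_{y_0}\in{\rm L}^\infty(\omega)$ exactly as in the proofs of Theorems~\ref{th1cond2} and~\ref{th1cond2h3}. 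The genuinely $O(h^3)$-specific point, visible in the hypotheses of the statement, is that the admissible thickness window~\eqref{fcondh3b} now involves the constitutive constants $c_1^+$, $c_2^+$, $C_1^+$, in contrast with the purely geometric window~\eqref{rcondh5} of the $O(h^5)$ theory.
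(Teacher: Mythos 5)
Your proposal is correct and follows essentially the same route as the paper, which proves this theorem only implicitly: it repeats the conditional little-regularity argument of Theorem \ref{th1cond2h3} (coercivity via Lemma \ref{coerh3r}, the estimates \eqref{u5nn}--\eqref{u9nnn}, the Korn inequality of Theorem \ref{Kornlr2}, and Lax--Milgram), noting that after symmetrization the admissible set \eqref{21l2u} needs no symmetry constraints and all inequalities used involve only symmetric parts. Your explicit check that the reverse-triangle/submultiplicativity manipulations of Lemma \ref{lemmaLy} remain valid with $\mathrm{sym}(\mathcal{R}_{\rm Koiter}^{\rm lin}-2\,\mathcal{G}_{\rm Koiter}^{\rm lin}{\rm L}_{y_0})=\mathcal{R}_{\rm Koiter}^{\rm lin}-2\,\mathrm{sym}(\mathcal{G}_{\rm Koiter}^{\rm lin}{\rm L}_{y_0})$ is precisely the point the paper asserts without detail.
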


\section{Conclusion}
We have considered the geometrically nonlinear constrained Cosserat shell model and we deduced it's linearization. The constrained model is interesting in it's own right since it provides a natural bridge to more classical shell models, in which no independent triad of directors is considered. We were able to show existence and uniqueness, using precisely those tools that are usually employed: the classical Korn's inequality on a surface. However, one important caveat remained: the results provide only a conditional existence, since the requirements on the admissible set are to strong.
Therefore, we were led to modify the variational problem. More precisely, we only consider the symmetric parts of some strain measures in the elastic energy and this allowed us to prove unconditional existence results. Sine shell theory is an approximation anyway, we believe that the modified approach is vastly superior and merits further investigation if one is interested in remaining close to more  classical shell models.

\bigskip

\begin{footnotesize}
	\noindent{\bf Acknowledgements:}   This research has been funded by the Deutsche Forschungsgemeinschaft (DFG, German Research Foundation) -- Project no. 415894848: NE 902/8-1 (P. Neff).

	\bibliographystyle{plain} 

\begin{thebibliography}{10}
	
	\bibitem{Adams75}
	R.A. Adams.
	\newblock {\em Sobolev {S}paces.}, volume~65 of {\em Pure and {A}pplied
		{M}athematics}.
	\newblock Academic Press, London, 1. edition, 1975.
	
	\bibitem{Altenbach-Zhilin-88}
	H.~Altenbach and P.A. Zhilin.
	\newblock A general theory of elastic simple shells (in {R}ussian).
	\newblock {\em Uspekhi Mekhaniki}, 11:107--148, 1988.
	
	\bibitem{Altenbach04}
	H.~Altenbach and P.A. Zhilin.
	\newblock The theory of simple elastic shells.
	\newblock In R.~Kienzler, H.~Altenbach, and I.~Ott, editors, {\em Theories of
		Plates and Shells. Critical Review and New Applications}, Euromech Colloquium
	444, pages 1--12. Springer, Heidelberg, 2004.
	
	\bibitem{Altenbach-Erem-Review}
	J.~Altenbach, H.~Altenbach, and V.A. Eremeyev.
	\newblock On generalized {C}osserat-type theories of plates and shells: a short
	review and bibliography.
	\newblock {\em Archive of Applied Mechanics}, 80:73--92, 2010.
	
	\bibitem{anicic2001modele}
	S.~Anicic.
	\newblock {\em Du mod{\`e}le de {K}irchhoff-{L}ove exact {\`a} un mod{\`e}le de
		coque mince et {\'a} un mod{\`e}le de coque pli{\'e}e}.
	\newblock PhD thesis, Universit{\'e} Joseph Fourier (Grenoble 1), 2001.
	
	\bibitem{anicic2003shell}
	S.~Anicic.
	\newblock A shell model allowing folds.
	\newblock In {\em Numerical {M}athematics and {A}dvanced {A}pplications}, pages
	317--326. Springer, 2003.
	
	\bibitem{anicic1999formulation}
	S.~Anicic and A.~L{\'e}ger.
	\newblock Formulation bidimensionnelle exacte du mod\`ele de coque {3D de
		Kirchhoff-Love}.
	\newblock {\em Compte Rendus Acad. Sci. Paris, Ser. Math.}, 329(8):741--746,
	1999.
	
	\bibitem{Birsan-IJES-07}
	M.~B\^{\i}rsan.
	\newblock On {S}aint-{V}enant's principle in the theory of {C}osserat elastic
	shells.
	\newblock {\em International Journal of Engineering Science}, 45:187--198,
	2007.
	
	\bibitem{Birsan08}
	M.~B\^{\i}rsan.
	\newblock Inequalities of {K}orn's type and existence results in the theory of
	{C}osserat elastic shells.
	\newblock {\em Journal of Elasticity}, 90:227--239, 2008.
	
	\bibitem{Birsan-EJM/S-09}
	M.~B\^{\i}rsan.
	\newblock Thermal stresses in cylindrical {C}osserat elastic shells.
	\newblock {\em European Journal of Mechanics of Solids A}, 28:94--101, 2009.
	
	\bibitem{Birsan-Alten-MMAS-10}
	M.~B\^{\i}rsan and H.~Altenbach.
	\newblock A mathematical study of the linear theory for orthotropic elastic
	simple shells.
	\newblock {\em Mathematical Methods in the Applied Sciences}, 33:1399--1413,
	2010.
	
	\bibitem{Birsan-Alten-ZAMM-11}
	M.~B\^{\i}rsan and H.~Altenbach.
	\newblock On the dynamical theory of thermoelastic simple shells.
	\newblock {\em Zeitschrift fÃŒr angewandte Mathematik und Mechanik},
	91:443--457, 2011.
	
	\bibitem{GhibaNeffPartIV}
	M.~B\^irsan, I.D. Ghiba, and P.~Neff.
	\newblock A linear {C}osserat-shell model including terms up to ${O}(h^5)$.
	\newblock {\em submitted, arXiv:2208.04574}.
	
	\bibitem{NeffBirsan13}
	M.~B\^{i}rsan and P.~Neff.
	\newblock Existence of minimizers in the geometrically non-linear 6-parameter
	resultant shell theory with drilling rotations.
	\newblock {\em Mathematics and Mechanics of Solids}, 19(4):376--397, 2014.
	
	\bibitem{Birsan-Neff-L54-2014}
	M.~B\^{\i}rsan and P.~Neff.
	\newblock Shells without drilling rotations: {A} representation theorem in the
	framework of the geometrically nonlinear 6-parameter resultant shell theory.
	\newblock {\em International Journal of Engineering Science}, 80:32--42, 2014.
	
	\bibitem{blouza1994lemme}
	A.~Blouza and H.~Le Dret.
	\newblock Sur le lemme du mouvement rigide.
	\newblock {\em Compte Rendus Acad. Sci. Paris, Ser. Math.}, 319(9):1015--1020,
	1994.
	
	\bibitem{blouza1999existence}
	A.~Blouza and H.~Le Dret.
	\newblock Existence and uniqueness for the linear {K}oiter model for shells
	with little regularity.
	\newblock {\em Quarterly of Applied Mathematics}, 57(2):317--337, 1999.
	
	\bibitem{budiansky1962best}
	B.~Budiansky and J.L.~Sanders Jr.
	\newblock {\em On the ``best" first-order linear shell theory}.
	\newblock Technical Report No. 14, Contract No. 1886(02). Division of
	Engineering and Applied Physics, Harvard University, 1962.
	
	\bibitem{budiansky1963best}
	B.~Budiansky and J.L.~Sanders Jr.
	\newblock On the ``best'' first-order linear shell theory. {Progress in Applied
		Mechanics, The Prager Anniversary Volume}, 1963.
	
	\bibitem{DoCarmo}
	M.G.~Do Carmo.
	\newblock {\em Differential {G}eometry of {C}urves and {S}urfaces.}
	\newblock Prentice Hall, Inc. Englewood Cliffs, New Jersey, 1976.
	
	\bibitem{Pietraszkiewicz-book04}
	J.~Chr\'o\'scielewski, J.~Makowski, and W.~Pietraszkiewicz.
	\newblock {\em Statics and Dynamics of Multifold Shells: Nonlinear Theory and
		Finite Element Method (in Polish).}
	\newblock Wydawnictwo IPPT PAN, Warsaw, 2004.
	
	\bibitem{Ciarlet98b}
	Ph.G. Ciarlet.
	\newblock {\em Introduction to {L}inear {S}hell {T}heory.}
	\newblock Series in Applied Mathematics. Gauthier-{V}illars, Paris, first
	edition, 1998.
	
	\bibitem{Ciarlet00}
	Ph.G. Ciarlet.
	\newblock {\em Mathematical {E}lasticity, {V}ol. {III}: {T}heory of {S}hells.}
	\newblock North-Holland, Amsterdam, first edition, 2000.
	
	\bibitem{Ciarlet2Diff-Geo2005}
	Ph.G. Ciarlet.
	\newblock {\em An {I}ntroduction to {D}ifferential {G}eometry with
		{A}pplications to {E}lasticity.}
	\newblock Springer, 2005.
	
	\bibitem{ciarlet2005introduction}
	Ph.G. Ciarlet.
	\newblock An introduction to differential geometry with applications to
	elasticity.
	\newblock {\em Journal of Elasticity}, 78(1-3):1--215, 2005.
	
	\bibitem{Cosserat08}
	E.~Cosserat and F.~Cosserat.
	\newblock Sur la th\'eorie des corps minces.
	\newblock {\em Comptes rendus de l'Acad\'emie des Sciences}, 146:169--172,
	1908.
	
	\bibitem{Cosserat09}
	E.~Cosserat and F.~Cosserat.
	\newblock {\em Th\'eorie des corps d\'eformables.}
	\newblock Hermann et Fils (reprint 2009), Paris, 1909.
	
	\bibitem{Davini75}
	C.~Davini.
	\newblock Existence of weak solutions in linear elastostatics of {C}osserat
	surfaces.
	\newblock {\em Meccanica}, 10:225--231, 1975.
	
	\bibitem{Eremeyev06}
	V.A. Eremeyev and W.~Pietraszkiewicz.
	\newblock Local symmetry group in the general theory of elastic shells.
	\newblock {\em Journal of Elasticity}, 85:125--152, 2006.
	
	\bibitem{GhibaNeffPartI}
	I.D. Ghiba, M.~B\^irsan, P.~Lewintan, and P.~Neff.
	\newblock The isotropic {C}osserat shell model including terms up to
	${O}(h^5)$. {Part I: D}erivation in matrix notation.
	\newblock {\em Journal of Elasticity}, 142:201--262, 2020.
	
	\bibitem{GhibaNeffPartII}
	I.D. Ghiba, M.~B\^irsan, P.~Lewintan, and P.~Neff.
	\newblock The isotropic elastic {Cosserat} shell model including terms up to
	order {$O(h^5)$} in the shell thickness. {Part II: Existence} of minimizers.
	\newblock {\em Journal of Elasticity}, 142:263--290, 2020.
	
	\bibitem{GhibaNeffPartIII}
	I.D. Ghiba, M.~B\^{\i}rsan, P.~Lewintan, and P.~Neff.
	\newblock A constrained {C}osserat-shell model including terms up to
	${O}(h^5)$.
	\newblock {\em Journal of Elasticity}, 146(1):83--141, 2021.
	
	\bibitem{GhibaNeffPartVI}
	I.D. Ghiba, P.~Lewintan, and P.~Neff.
	\newblock On deformation measures in shell models.
	\newblock {\em in preparation}, 2022.
	
	\bibitem{Raviart79}
	V.~Girault and P.A. Raviart.
	\newblock {\em Finite Element Approximation of the {N}avier-{S}tokes
		Equations.}, volume 749 of {\em Lect. Notes Math.}
	\newblock Springer, Heidelberg, 1979.
	
	\bibitem{koiter1973foundations}
	W.~T. Koiter and J.G. Simmonds.
	\newblock Foundations of shell theory.
	\newblock In {\em Theoretical and {A}pplied {M}echanics}, pages 150--176.
	Springer, 1973.
	
	\bibitem{Leis86}
	R.~Leis.
	\newblock {\em {Initial Boundary Value Problems in Mathematical Physics}}.
	\newblock Teubner, Stuttgart, 1986.
	
	\bibitem{Libai98}
	A.~Libai and J.G. Simmonds.
	\newblock {\em The {N}onlinear {T}heory of {E}lastic {S}hells.}
	\newblock Cambridge University Press, Cambridge, 2nd edition, 1998.
	
	\bibitem{Naghdi72}
	P.M. Naghdi.
	\newblock The theory of shells and plates.
	\newblock In S.~Fl\"ugge, editor, {\em Handbuch der {P}hysik, {M}echanics of
		{S}olids.}, volume VI a/2, pages 425--640. Springer, Berlin, 1972.
	
	\bibitem{Neff_plate04_cmt}
	P.~Neff.
	\newblock A geometrically exact {C}osserat-shell model including size effects,
	avoiding degeneracy in the thin shell limit. {P}art {I}: {F}ormal dimensional
	reduction for elastic plates and existence of minimizers for positive
	{C}osserat couple modulus.
	\newblock {\em Continuum Mechanics Thermodynamics}, 16:577--628, 2004.
	
	\bibitem{neff2013grioli}
	P.~Neff, J.~Lankeit, and A.~Madeo.
	\newblock On {G}rioli's minimum property and its relation to {C}auchy's polar
	decomposition.
	\newblock {\em International Journal of Engineering Science}, 80:207--217,
	2014.
	
	\bibitem{Pietraszkiewicz11}
	W.~Pietraszkiewicz.
	\newblock Refined resultant thermomechanics of shells.
	\newblock {\em International Journal of Engineering Science}, 49:1112--1124,
	2011.
	
	\bibitem{Rubin00}
	M.B. Rubin.
	\newblock {\em Cosserat {T}heories: {S}hells, {R}ods and {P}oints.}
	\newblock Kluwer Academic Publishers, Dordrecht, 2000.
	
	\bibitem{Toupin62}
	R.A. Toupin.
	\newblock Elastic materials with couple stresses.
	\newblock {\em Archive for Rational Mechanics and Analysis}, 11:385--413, 1962.
	
	\bibitem{Zhilin76}
	P.A. Zhilin.
	\newblock Mechanics of deformable directed surfaces.
	\newblock {\em International Journal of Solids and Structures}, 12:635--648,
	1976.
	
	\bibitem{Zhilin06}
	P.A. Zhilin.
	\newblock {\em Applied Mechanics -- Foundations of Shell Theory (in Russian)}.
	\newblock State Polytechnical University Publisher, Sankt Petersburg, 2006.
	
\end{thebibliography}

	\addcontentsline{toc}{section}{References}

\end{footnotesize}

\end{document}